\documentclass[reqno,11pt]{amsart}
\usepackage{amsmath,amssymb,amsfonts,amsthm, amscd,indentfirst}
\usepackage{amsmath,latexsym,amssymb,amsmath,
	amscd,amsthm,amsxtra}
\usepackage{color}
\usepackage{pdfpages}
\usepackage{graphics}
\usepackage{enumitem}

\usepackage[usenames,dvipsnames]{xcolor}

\usepackage{float}
\usepackage{varioref}
\usepackage{hyperref}
\usepackage[noabbrev,capitalise,nameinlink]{cleveref}


\PassOptionsToPackage{unicode}{hyperref}
\PassOptionsToPackage{naturalnames}{hyperref}

\hypersetup{colorlinks={true},linkcolor={blue},citecolor=blue}
\usepackage[english]{babel}
\usepackage{csquotes}

\begin{filecontents*}{ext-eprint.dbx}
	\ProvidesFile{ext-eprint.dbx}[2016/09/11 extended stand-alone eprint fields]
	\DeclareDatamodelFields[type=field,datatype=verbatim]{arxiv,mr,zbl,jstor,hdl,pubmed,googlebooks,pmcid}
	\DeclareDatamodelEntryfields{arxiv,mr,zbl,jstor,hdl,pubmed,googlebooks,pmcid}
	\DeclareDatamodelFields[type=field,datatype=literal]{arxivclass}
	\DeclareDatamodelEntryfields{arxivclass}
\end{filecontents*}

\usepackage[backend=biber,
style=alphabetic,datamodel=ext-eprint,maxbibnames=99]{biblatex}
\DeclareSourcemap{
	\maps[datatype=bibtex]{
		\map{
			\step[fieldsource=pmid, fieldtarget=pubmed]
		}
	}
}
\addbibresource{papermodel.bib}

\makeatletter
\DeclareFieldFormat{arxiv}{%
	arXiv\addcolon\space
	\ifhyperref
	{\href{http://arxiv.org/\abx@arxivpath/#1}{%
			\nolinkurl{#1}%
			\iffieldundef{arxivclass}
			{}
			{\addspace\texttt{\mkbibbrackets{\thefield{arxivclass}}}}}}
	{\nolinkurl{#1}
		\iffieldundef{arxivclass}
		{}
		{\addspace\texttt{\mkbibbrackets{\thefield{arxivclass}}}}}}
\makeatother
\DeclareFieldFormat{pmcid}{%
	PMCID\addcolon\space
	\ifhyperref
	{\href{http://www.ncbi.nlm.nih.gov/pmc/articles/#1}{\nolinkurl{#1}}}
	{\nolinkurl{#1}}}
\DeclareFieldFormat{mr}{%
	MR\addcolon\space
	\ifhyperref
	{\href{http://www.ams.org/mathscinet-getitem?mr=MR#1}{\nolinkurl{#1}}}
	{\nolinkurl{#1}}}
\DeclareFieldFormat{zbl}{%
	Zbl\addcolon\space
	\ifhyperref
	{\href{http://zbmath.org/?q=an:#1}{\nolinkurl{#1}}}
	{\nolinkurl{#1}}}
\DeclareFieldAlias{jstor}{eprint:jstor}
\DeclareFieldAlias{hdl}{eprint:hdl}
\DeclareFieldAlias{pubmed}{eprint:pubmed}
\DeclareFieldAlias{googlebooks}{eprint:googlebooks}

\renewbibmacro*{eprint}{%
	\printfield{arxiv}%
	\newunit\newblock
	\printfield{jstor}%
	\newunit\newblock
	\printfield{mr}%
	\newunit\newblock
	\printfield{zbl}%
	\newunit\newblock
	\printfield{hdl}%
	\newunit\newblock
	\printfield{pubmed}%
	\newunit\newblock
	\printfield{pmcid}%
	\newunit\newblock
	\printfield{googlebooks}%
	\newunit\newblock
	\iffieldundef{eprinttype}
	{\printfield{eprint}}
	{\printfield[eprint:\strfield{eprinttype}]{eprint}}}

\usepackage{graphicx}
\usepackage{caption}


\usepackage{epsfig,here}
\usepackage{subfigure,here}

\newtheorem{theorem}{Theorem}[section]
\newtheorem{lemma}[theorem]{Lemma}
\newtheorem{proposition}[theorem]{Proposition}
\newtheorem{definition}[theorem]{Definition}
\newtheorem{corollary}[theorem]{Corollary}
\newtheorem{remark}[theorem]{Remark}

\newtheorem{thmx}{Theorem}


\textwidth 16cm \textheight 22cm \hoffset=-1.5cm \voffset=-2cm

\def\de{\delta}
\def\De{\Delta}
\def\La{\Lambda}
\def\ep{\epsilon}

\def\om{\omega}
\def\Om{\Omega}

\def\p{\partial}

\newcommand{\vvert}{\vert\vert}

\newcommand{\mfH}{\mathbf{H}}

\newcommand{\mfR}{\mathbf{R}}
\newcommand{\mfS}{\mathbf{S}}

\newcommand{\bsmu}{\boldsymbol\mu}
\newcommand{\bstau}{\boldsymbol\tau}
\newcommand{\bseta}{\boldsymbol\eta}




\newcommand{\mcA}{\mathcal{A}}

\newcommand{\mcG}{\mathcal{G}}
\newcommand{\mcH}{\mathcal{H}}

\newcommand{\mcL}{\mathcal{L}}

\newcommand{\mcU}{\mathcal{U}}

\newcommand{\ra}{\rightarrow}
\newcommand{\pr}{\partial_{\rm rel}}
\newcommand{\rd}{{\rm d}}

\numberwithin{equation} {section}

\begin{document}
	
	\title[Alexandrov-type theorem]{Alexandrov-type theorem for singular capillary CMC hypersurfaces in the half-space}
	
	\author[Xia]{Chao Xia}
	\address[Chao Xia]{
	\newline\indent School of Mathematical Sciences, Xiamen University,
	361005, Xiamen, P.R. China}
	\email{chaoxia@xmu.edu.cn}
	
	\author[Zhang]{Xuwen Zhang}
	\address[Xuwen Zhang]{
	\newline\indent School of Mathematical Sciences, Xiamen University,
	361005, Xiamen, P.R. China
\newline\indent\&
\newline\indent Institut f\"ur Mathematik, Goethe-Universit\"at, 
60325, Frankfurt, Germany}
	\email{xuwenzhang@stu.xmu.edu.cn}
 
\thanks{This work is  supported by the NSFC (Grant No. 11871406, 12271449).}
	
	\begin{abstract}
In this paper, we consider the classification problem for critical points of relative isoperimetric-type problem in the half-space. Under certain regularity assumption, we prove an Alexandrov-type theorem for the singular capillary CMC hypersurfaces in the half-space. The key ingredient is a new shifted distance function that is suitable for the study of capillary problem in the half-space.

		\
		
		\noindent {\bf MSC 2020:} 35J93, 49Q15, 49Q20, 53C45.\\
		{\bf Keywords:}  CMC hypersurface, capillary hypersurface, Alexandrov's theorem, sets of finite perimeter. \\
		
	\end{abstract}
	
	\maketitle
	
	\medskip

\section{Introduction}

The celebrated Alexandrov's theorem \cite{Aleksandrov62} in differential geometry says that any embedded closed constant mean curvature (CMC) hypersurface in the Euclidean space is a round sphere. Alexandrov developed the moving plane method to prove his theorem.
Ros \cite{Ros87} and Montiel-Ros \cite{MR91} found an alternative way to achieve Alexandrov's theorem, via Heintze-Karcher's inequality.
It is well-known that CMC hypersurfaces play the role as critical points of the Euclidean isoperimetric problem among $C^2$-hypersurfaces. 
From the perspective of modern calculus of variations, De Giorgi \cite{DeG58} has characterized round balls as the only isoperimetric sets among sets of finite perimeter. 
It is a natural question to characterize the critical points of the Euclidean isoperimetric problem among sets of finite perimeter. Quite recently, Delgadino-Maggi \cite{DM19} gave a complete characterization.

\begin{thmx}[{\cite[Theorem 1]{DM19}}]\label{Thm-DM19-1}
    Among sets of ﬁnite perimeter and ﬁnite volume, ﬁnite unions of balls with equal radii are the only critical points of the Euclidean isoperimetric problem.
\end{thmx}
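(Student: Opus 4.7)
My plan is to mirror the classical Montiel--Ros route and derive the classification from a Heintze--Karcher-type inequality tailored to the weak category of sets of finite perimeter. First I would reformulate the critical point condition in its distributional form: there exists a constant $H\in\mbR$ such that
\[
\int_{\partial^*E}\mathrm{div}_{\partial^*E}X\,\rd\mcH^n = H\int_E\mathrm{div}\,X\,\rd x
\qquad\forall\,X\in C_c^\infty(\mbR^{n+1};\mbR^{n+1}).
\]
Testing with the position vector $X(x)=x$ instantly yields the Minkowski-type balance $nP(E)=(n+1)H|E|$, which both fixes $H$ in terms of $|E|$ and $P(E)$ and forces $H>0$ when $E$ is non-trivial.

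The heart of the proof is a Heintze--Karcher inequality in the weak setting: for any bounded set of finite perimeter $E$ admitting a positive, integrable weak mean curvature $h$ on $\partial^*E$,
\[
(n+1)|E|\le n\int_{\partial^*E}\frac{1}{h}\,\rd\mcH^n,
\]
with equality forcing $E$ to be (up to an $\mcH^{n+1}$-null set) a finite disjoint union of open balls of common radius. Substituting the constant $h\equiv H$ from the first step and using the balance identity turns the above inequality into an equality, so the rigidity statement alone finishes the proof. To establish the inequality I would work with the distance function $\delta(x)=\mathrm{dist}(x,\mbR^{n+1}\setminus E)$ on $E$, combine the coarea formula for $\delta$ with the classical pointwise Jacobian estimate $\prod_i(1-t\kappa_i)\le(1-th/n)^n$ coming from AM--GM applied to principal curvatures with fixed sum $h$, and push these local bounds through a distributional argument. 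The delicate point is that the principal curvatures are only defined in an approximate sense along $\partial^*E$, so the AM--GM step must be executed against a smooth approximation of $E$ (or a mollified variant of $\delta$), and the singular set $\partial E\setminus\partial^*E$ (of $\mcH^n$-measure zero by De Giorgi's theorem) must be shown to contribute nothing to either side of the inequality.

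The equality case upgrades this weak estimate into geometric rigidity: once equality holds, each inward-normal ray emanating from $\partial^*E$ is defined up to length exactly $n/H$, and the AM--GM saturation forces $\mcH^n$-a.e.\ umbilicity of $\partial^*E$. Allard-type $\varepsilon$-regularity then promotes $\partial^*E$ to a $C^{1,\alpha}$ hypersurface, and umbilicity plus constant mean curvature bootstrap it into a disjoint union of round spheres of radius $n/H$; finiteness of the union follows from $|E|<\infty$. The main obstacle of the whole program is precisely the second step: in the smooth CMC setting Heintze--Karcher is an elementary consequence of ODE comparison along normal geodesics, whereas here one must make sense of the normal exponential map without a priori regularity of $\partial^*E$, reconcile the distributional notion of mean curvature with a pointwise AM--GM estimate, and do so while keeping the equality characterization accessible. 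All the subsequent arguments are comparatively soft postprocessings of this core analytic input.
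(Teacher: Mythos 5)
This statement is \cite[Theorem 1]{DM19}, which the paper quotes rather than proves; the relevant comparison is with the Delgadino--Maggi strategy that the paper summarizes in its introduction and then adapts in Sections 3--4. Your overall plan (first variation $\Rightarrow$ constant distributional mean curvature, Minkowski identity via the position field, Heintze--Karcher via the distance function and AM--GM, rigidity in the equality case) is indeed the Montiel--Ros route that Delgadino--Maggi follow. However, you have left unresolved precisely the steps that constitute the actual content of their proof, and one of your proposed fixes would not work.

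First, the surjectivity of the normal map. The inequality $|\Om|\le|\zeta(Z)|$ requires that $\mcL^{n+1}$-a.e.\ point of $\Om$ be reached by flowing inward from a \emph{regular} boundary point. You say the singular set ``must be shown to contribute nothing,'' but this is the hard part, and $\mcH^n(\p\Om\setminus\p^*\Om)=0$ alone does not give it: a null set of boundary points can a priori be the nearest boundary point for a positive measure of interior points. Delgadino--Maggi handle this by (i) proving a $C^{1,1}$-rectifiability theorem for the level sets $\Gamma_s^t$ of the distance function (using the two-mutually-tangent-balls property and Whitney/implicit function theorems, cf.\ \cref{Thm-C11Rec} in this paper), and (ii) bounding the counting measure $\mcH^0(g_s^{-1}(x))\le 2$ for every boundary point $x$, via blow-up to a stationary rectifiable cone and the maximum principle \cref{Lem-MP1}; combining this with the area formula on the $C^{1,1}$ level sets kills the contribution of the singular set. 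Neither ingredient appears in your proposal.

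Second, your suggested execution of the AM--GM step ``against a smooth approximation of $E$ (or a mollified variant of $\de$)'' would fail: mollifying $E$ destroys the constant-mean-curvature structure, and mollifying $\de$ loses the exact relation between the principal curvatures of the level sets and those of $\p^*\Om$. The correct device is that $\p^*\Om$ is already locally analytic (by Allard regularity applied to the stationarity condition --- this is part of the setup, not of the equality analysis as you place it), and the Jacobian computation is carried out on the $C^{1,1}$ level sets where the shape operator exists $\mcH^n$-a.e.\ with the eigenvalue bounds $-1/s\le\kappa_i\le 1/(t-s)$. Finally, in the equality case the conclusion is a finite union of balls whose boundaries may touch at singular points, not a disjoint union of spheres; showing that each spherical piece of ${\rm reg}\,\p\Om$ closes up to a complete sphere requires Sch\"atzle's strong maximum principle (\cref{Thm-SMP}) applied at the putative singular points, which your ``bootstrap'' does not supply.
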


It is known that if a set of finite perimeter and ﬁnite volume $\Om\subset\mfR^{n+1}$ is a critical point of the Euclidean isoperimetric problem, then up to a $\mcL^{n+1}$-negligible set, its topological boundary $\p\Om=\p^*\Om\cup (\p \Om\setminus \p^*\Om)$, where the reduced boundary $\p^*\Om$ is locally an analytic CMC hypersurface and relatively open in $\p\Om$, while $\mcH^n(\p \Om\setminus \p^*\Om)=0$, see for example \cite[subsection 2.4]{DM19}. 
In fact, Delgadino-Maggi \cite{DM19} proved that 
a set of finite perimeter and ﬁnite volume $\Om$ satisfying that the induced varifold of $\p^*\Om$ is of constant generalized mean curvature and $\mcH^n(\p \Om\setminus \p^*\Om)=0$, must be a ﬁnite union of balls with equal radii.
Delgadino-Maggi obtain their result by the subtle analysis which generalizes Montiel-Ros' argument in \cite{MR91} to sets of finite perimeter. We mention that similar consideration as Delgadino-Maggi has been also done by De Rosa-Kolasinski-Santilli \cite{DeRKS20} for the anisotropic case and by Maggi-Santilli \cite{MS23} concerning CMC in Brendle's class of warped product manifolds \cite{Bre13}.

The capillary phenomena appear naturally in the study of the equilibrium shape of liquid pendant drops and crystals in a given solid container.
The mathematical model has been established through the work of Young, Laplace,  Gauss and others,  as a variational problem on minimizing a free energy functional under volume constraint.
We are interested in a simple but important model, where the interior and the boundary of the container are both Euclidean, that is, the capillary phenomena in a Euclidean half-space. Let $\mfR^{n+1}_+=\{x\in\mfR^{n+1}: x\cdot E_{n+1}>0\}$ be the open upper half-space, where $E_{n+1}$ is the $(n+1)$-coordinate unit vector.
The global volume-constraint  minimizers for the corresponding relative isoperimetric-type problem  in $\mfR^{n+1}_+$
has been classified by Gonzalez \cite{Gonzalez76}. Precisely, for a set of finite perimeter and ﬁnite volume $\Om\subset \mfR^{n+1}_+$ and $\theta\in(0,\pi)$, consider the free energy functional 
$$\mathcal{E}(\Om)=P(\Om; \mfR^{n+1}_+)-\cos\theta P(\Om; \p\mfR^{n+1}_+).$$
Gonzalez \cite{Gonzalez76} proved the axially symmetric property of the global minimizers using the Schawarz symmetrization. A standard comparison argument by the isoperimetric inequality leads to the classification that the only volume-constraint global minimizers are spherical caps intersecting $\p\mfR^{n+1}_+$ at the angle $\theta$  see for example \cite[Section 2.2]{CM07-2} and \cite{CM07-1}.
See also \cite[Section 19.4]{Mag12} for an overview of the problem and  \cite{MM16} concerning the appearance of gravitational energy.
Recently, by some adaptions of the method proposed in \cite{SZ98} together with the regularity issue addressed by De Philippis-Maggi in \cite{DePM15,DePM17}, the uniqueness of the volume-constraint local minimizers of the free energy functional in the half-space has been characterized by the authors in \cite[Theorem 1.11]{XZ21}.

In the smooth setting, capillary CMC hypersurfaces play the role as critical points of the relative isoperimetric problem among $C^2$-hypersurfaces.
Here a capillary hypersurface in a container is the hypersurface that intersects the boundary of the container at a constant contact angle.

 Wente \cite{Wente80} exploited the moving plane method to prove an Alexandrov-type theorem, which says that any embedded capillary CMC hypersurface in  $\mfR^{n+1}_+$ must be a spherical cap.
Recently, joint with Jia and Wang \cite{JWXZ22-B}, we reprove Wente's result by developing a Heintze-Karcher-type inequality for capillary hypersurfaces in $\mfR^{n+1}_+$ in the spirit of \cite{MR91}. See also \cite{DW22} for a related consideration in the half-space and \cite{JWXZ22-A} for the anisotropic case.

 In the non-smooth setting, the study of CMC hypersurfaces has attached well attention. Using the min-max theory, Zhou-Zhu \cite{ZZ19} proved the existence of non-trivial, smooth, closed, almost embedded CMC hypersurfaces in any closed Riemmanian manifold $M^{n+1}(3\leq n+1\leq7)$, and then the result is extended to prescribed mean curvature (PMC) by the same authors in \cite{ZZ20}.
Very recently, the Min-Max method is used independently by De Masi-De Philippis \cite{DeMDeP21} and Li-Zhou-Zhu \cite{LZZ21} to show the existence of capillary minimal or CMC hypersurfaces in compact 3-manifolds with boundary.

Following \cite{LZZ21}, to study the capillary phenomenon in the non-smooth setting, we consider the following functional defined on sets of finite perimeter in the half-space.
 
\begin{definition}[$\mcA$-functional]\label{Defn-A-functional}
\normalfont
Given $\theta\in(0,\pi)$ and a constant $c>0$, for a set of finite perimeter and finite volume $\Om\subset\overline{\mfR^{n+1}_+}$, the $\mcA$-functional of $\Om$ with respect to $\theta$ and $c$ is given by
\begin{align}
    \mcA(\Om)=\mathcal{E}(\Om)-c\vert\Om\vert.
\end{align}

We say that $\Om$ is stationary for the $\mcA$-functional if for any $C^1$-diffeomorphism $f:\overline{\mfR^{n+1}_+}\ra\overline{\mfR^{n+1}_+}$ with compact support, such that $f:\p\mfR^{n+1}_+\ra\p\mfR^{n+1}_+$ is a diffeomorphism of $\p\mfR^{n+1}_+$,
there holds
\begin{align*}
    \frac{\rd}{\rd t}\mid_{t=0}\mcA(\psi_t(\Om))=0,
\end{align*}
where $\{\psi_t\}_{\vert t\vert<\epsilon}$ is a one parameter family of diffeomorphisms induced by $f$.
\end{definition}

\begin{definition}
    \normalfont
    For any bounded,
relatively open set of finite perimeter $\Om\subset\overline{\mfR^{n+1}_+}$,
Let $\pr\Om
:=\overline{\p\Om\cap\mfR^{n+1}_+}$ be the relative boundary of $\Om$ and $\Gamma:=\pr\Om\cap\p\mfR^{n+1}_+$.
    The \textit{regular part of $\pr\Om$} is defined by
    \begin{align*}
        {\rm reg}\pr\Om=\{x\in \pr\Om:\text{ there exists an }r_x>0\text{ such that }{\rm reg}\pr\Om\cap B_{r_x}(x)\\
        \text{is a }C^2\text{-manifold possibly with boundary contained in }\p\mfR^{n+1}_+\},
    \end{align*}
    while ${\rm sing}\pr\Om=\pr\Om\setminus{\rm reg}\pr\Om$ is called the \textit{singular set of $\pr\Om$}.
    In this way, ${\rm sing}\pr\Om$ is relatively closed in $\pr\Om$.
    
    We also denote by ${\rm reg}\Gamma:={\rm reg}\pr\Om\cap\Gamma$ the regular part of $M$ in $\Gamma$ and ${\rm sing}\Gamma=\Gamma\setminus{\rm reg}\Gamma$ the singular part of $M$ in $\Gamma$.
Moreover, $\nu_\Om$ denotes the outer unit normal of $\Om$ along ${\rm reg}\pr\Om$, $H$ denotes the mean curvature of ${\rm reg}\pr\Om$ in $\mfR^{n+1}$;
$\mu,\bar\nu$ denote the outer unit conormals of ${\rm reg}\Gamma$ in $\pr\Om$ and ${\rm reg}\Gamma$ in $T$, respectively. We refer to \cref{figure1} for illustration.
\end{definition}

Motivated by Delgadino-Maggi's work \cite{DM19}, a natural question is to characterize the critical points of the $\mcA$-functional among sets of finite perimeter. This is the main purpose of this paper.
\begin{theorem}\label{Thm-Alexandrov}
    Given $\theta\in(0,\pi)$ and $c>0$.
    Let $\Om\subset \overline{\mfR^{n+1}_+}$ be a bounded, relatively open set of finite perimeter and finite volume, which is stationary for the $\mcA$-functional.
    Assume that $\mcH^{n-1}({\rm sing}\pr\Om)=0$,
and  $\Gamma$ is a smooth $(n-1)$-manifold in $\p\mfR^{n+1}_+$.
    Then $\pr\Om$ must be a finite union of $\theta$-caps and spheres with equal radii.
\end{theorem}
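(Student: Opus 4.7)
The plan is to adapt the Heintze-Karcher--Montiel-Ros strategy, in its measure-theoretic incarnation from Delgadino-Maggi~\cite{DM19}, to the capillary setting in $\mfR^{n+1}_+$. Testing the stationarity condition against compactly supported deformations that preserve $\p\mfR^{n+1}_+$ first yields the usual information on the regular parts: the mean curvature of ${\rm reg}\,\pr\Om$ equals the constant $c/n$, while along ${\rm reg}\,\Gamma$ the capillary angle identity $\langle \mu, -E_{n+1}\rangle=\cos\theta$ holds. In the smooth setting our previous work with Jia and Wang~\cite{JWXZ22-B} established a Heintze-Karcher inequality for capillary hypersurfaces in $\mfR^{n+1}_+$, with equality characterising spherical $\theta$-caps, and the task is to carry over this inequality, and especially its rigidity case, to the weakly regular $\Om$ of the theorem.

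The key technical device is a new shifted distance function tailored to the half-space: instead of the Euclidean distance $d(\cdot,\pr\Om)$ used in \cite{DM19}, I would define a Lipschitz function $\bar d\colon\overline{\Om}\to[0,\infty)$ whose sublevel sets are \textit{half-ball caps}, i.e.\ truncated balls of the form $B_r(y-r\cos\theta\,E_{n+1})\cap\overline{\mfR^{n+1}_+}$ with $y\in\pr\Om$. This family is precisely what remains $\theta$-capillary as $r$ grows, so $\{\bar d\le t\}$ realises the inward capillary normal flow from $\pr\Om$; on a $\theta$-cap or a full sphere contained in $\mfR^{n+1}_+$ this flow sweeps out the entire interior in finite time.

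Using $\bar d$ one builds an evolution map $\Phi\colon{\rm reg}\,\pr\Om\times[0,\infty)\to\overline{\mfR^{n+1}_+}$ by flowing along the outward normals of the half-ball caps. I would then prove that $\Om$ is covered, up to an $\mcL^{n+1}$-negligible set, by $\Phi(\{(y,t)\colon 0\le t\le 1/H\})$. The hypothesis $\mcH^{n-1}({\rm sing}\,\pr\Om)=0$ is precisely what is needed to discard the singular set when applying the area formula, in the spirit of the covering and capacity arguments of \cite{DM19}. Computing the Jacobian of $\Phi$ in terms of the principal curvatures and invoking the arithmetic-geometric mean inequality yields the capillary Heintze-Karcher inequality in measure-theoretic form. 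As in \cite{DM19}, stationarity of $\Om$ for $\mcA$ forces equality, so every point of ${\rm reg}\,\pr\Om$ is umbilical; a connected-component argument then identifies each smooth component as a full sphere in $\mfR^{n+1}_+$ or a $\theta$-cap, all of radius $n/c$, and the vanishing of $\mcH^{n-1}({\rm sing}\,\pr\Om)$ glues these pieces into a finite union of complete caps and spheres.

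The principal obstacle is the analysis at the contact set $\Gamma$ in the presence of the singular set: one must verify that $\Phi$ is well defined and essentially injective on a set of full measure, that it meets $\p\mfR^{n+1}_+$ along the correct subflow, and that the capillary angle enters the area-formula computation even though $\pr\Om$ is not globally $C^2$. The assumption that $\Gamma$ is a smooth $(n-1)$-manifold is crucial here: it guarantees $\bar d$ is regular near $\Gamma$ and that the family of half-ball caps behaves uniformly across the capillary edge, so that the Delgadino-Maggi singular-set machinery extends up to $\p\mfR^{n+1}_+$.
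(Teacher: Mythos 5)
Your overall strategy coincides with the paper's: the shifted distance function $\de_\theta(y)=\sup\{r: B_r(y-r\cos\theta E_{n+1})\cap\pr\Om=\varnothing\}$, the parallel map $\zeta_\theta(x,t)=x-t(\nu_\Om-\cos\theta E_{n+1})$, the area formula plus AM--GM to get the capillary Heintze--Karcher inequality, and the rigidity analysis. However, the proposal has a genuine gap exactly at the step you flag as ``the principal obstacle'' but do not resolve: the essential injectivity of the flow map, i.e.\ the bound $\mcH^0(\zeta_\theta^{-1}(y))\le 2$ (and $=1$ for a.e.\ $y$ in the equality case). In the interior this follows, as in Delgadino--Maggi, from blowing up $\pr\Om$ at a point touched by two shifted balls and applying the maximum principle for stationary rectifiable cones trapped between two non-opposite half-spaces. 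At a contact point $x\in\Gamma$ no such tool exists off the shelf: one must first show that the blow-up of the triple $(\pr\Om,T,\Gamma)$ is a \emph{$\theta$-stationary triple} of varifold cones (this is where $\mcH^{n-1}({\rm sing}\pr\Om)=0$ and the smoothness of $\Gamma$ really enter --- they give the tangential divergence identities and force the blow-up of $T$ to be a half-plane), and then prove a new \emph{boundary} strong maximum principle: if such a cone triple is contained in a half-space $H^-=\{z\cdot\nu\le 0\}$ through the origin, then the angle $\alpha=\arccos(\nu\cdot(-E_{n+1}))$ satisfies $\alpha\le\theta$, with rigidity when $\alpha=\theta$. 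This rules out a point of $\Gamma$ being reached from an interior point of $\Om$ (where one would have $\alpha>\theta$) and pins down the touching half-space when $y\in T$. Saying that smoothness of $\Gamma$ ``guarantees $\bar d$ is regular near $\Gamma$'' does not substitute for this; without the boundary maximum principle the multiplicity bound, and hence the whole area-formula argument, is unproved.

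Two smaller points. First, ``stationarity of $\Om$ for $\mcA$ forces equality'' is not automatic: equality in Heintze--Karcher is obtained by comparing with the Minkowski-type identity $\int_{\pr\Om} n(1-\cos\theta\,\nu_\Om\cdot E_{n+1})-x\cdot(c\nu_\Om)\,\rd\mcH^n=0$, which must be established for the singular capillary hypersurface (via cut-off functions near ${\rm sing}\pr\Om$, again using $\mcH^{n-1}({\rm sing}\pr\Om)=0$); you should state this ingredient. Second, the final ``gluing'' is not a consequence of $\mcH^{n-1}({\rm sing}\pr\Om)=0$ alone: completeness of each spherical piece is proved by showing $\de_\theta$ of each center equals the common radius (using the a.e.\ single-valuedness of $\zeta_\theta^{-1}$) and then invoking Sch\"atzle's strong maximum principle to propagate the sphere through possible singular points. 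Also note the boundary angle identity should read $\nu_\Om\cdot E_{n+1}=\cos\theta$ (equivalently $\mu\cdot\bar\nu=\cos\theta$) along ${\rm reg}\,\Gamma$, not $\mu\cdot(-E_{n+1})=\cos\theta$.
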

\begin{remark}\label{Rem-assumption-mainThm}
\normalfont
    We make several remarks on the assumptions of our main theorem.
\begin{enumerate}
    \item 
    For the case $\theta=\pi/2$, the $\mcA$-stationary set is indeed a critical point of the relative isoperimetric problem (for area functional) in the half-space, and the characterization can be deduced from Delgadino-Maggi's result.
    Precisely,  provided $\mcH^n({\rm sing}\pr\Om)=0$, $\mcH^{n-1}({\rm sing}\Gamma)=0$,   Allard's regularity theorem for free boundary rectifiable varifolds \cite[Theorem 4.13]{GJ86} implies the young's law for $\theta=\pi/2$. Hence one may reflect $\pr\Om$ across the hyperplane $\{x_{n+1}=0\}$ and obtain a closed hypersurface $\tilde\Sigma$ such that the induced varifold of ${\rm reg}\tilde\Sigma$ if of constant generalized mean curvature and $\mcH^n({\rm sing}\tilde\Sigma)=0$, the assertion then follows from \cite[Theorem 1]{DM19}. 

    In view of the above,  we expect that for general $\theta$, the condition $\mcH^{n-1}({\rm sing}\pr\Om)=0$ in \cref{Thm-Alexandrov} mighted be weakened to $\mcH^n({\rm sing}\pr\Om)=0$, $\mcH^{n-1}({\rm sing}\Gamma)=0$.
Our assumption $\mcH^{n-1}({\rm sing}\pr\Om)=0$ is technical but crucial for the proof, we delay a detailed illustration to \cref{Rem-natural-assumption}.

    \item 
    The classical Alexandrov's moving plane method has been extended to the context of integral varifolds by Haslhofer-Hershkovits-White \cite{HHW20}, where they 
   made a so-called   tameness assumption on integral varifolds \cite[Definition 1.6]{HHW20}. The condition that $\Gamma$ is a smooth $(n-1)$-manifold is in fact included in the tameness condition.
  On the other hand, the assumption that $\Gamma$ is a smooth $(n-1)$-manifold and that $\pr\Om$ is regular enough up to $\Gamma$ for $\mcH^{n-1}$-a.e. ensures that the Young's law holds (which provides the contact angle condition).
    Such $\pr\Om$ will be defined as the singular capillary CMC hypersurface in the half-space in \cref{Defn-CapillaryCMC}.

\end{enumerate}
\end{remark}

To illustrate the proof, we first make a quick review of Jia-Wang-Xia-Zhang's argument  \cite{JWXZ22-B} in the smooth setting, which can be viewed as the extension of Montiel-Ros' argument \cite{MR91} to the capillary case. For $\Om\subset \overline{\mfR^{n+1}_+}$ such that $\pr\Om$ is of CMC and $\pr\Om$ intersects $\p\mfR^{n+1}_+$ at the angle $\theta$, we define a set $$Z=\left\{(x,t)\in \pr\Om\times \mfR: 0<t\le \frac{1}{\max_i \kappa_i(x)}\right\},$$ and a map  which indicates a family of shifted parallel hypersurfaces, $$\zeta_\theta: Z\to \mfR^{n+1}: \quad \zeta_\theta(x, t)=x-t(\nu_\Om-\cos\theta E_{n+1})$$
where $\kappa_i, i=1\cdots, n$ are the principal curvatures and $\max_i \kappa_i(x)\ge\frac{H_\Om}{n}>0$. Using the capillary boundary condition, we find that $\zeta_\theta$ is surjective onto $\Omega$, namely, $\Om\subset \zeta_\theta(Z)$. 
By the area formula, we obtain that
\begin{align*}
    |\Om|\le |\zeta_\theta(Z)|
    &\le \int_{\pr\Om}\int_0^\frac{1}{\max_i \kappa_i(x)}(1-\cos\theta \nu_\Om\cdot E_{n+1})\prod_{i=1}^n(1-t\kappa_i(x))\rd t\rd\mcH^n(x)\\
    &\le\int_{\pr\Om}\frac{1-\cos\theta \nu_\Om\cdot E_{n+1}}{H_\Om}\rd\mcH^n,
\end{align*}
so that the Heintze-Karcher inequality holds 
\begin{align*}
    |\Om|\le\int_{\pr\Om}\frac{1-\cos\theta\nu_\Om\cdot E_{n+1}}{H_\Om}\rd\mcH^n
\end{align*}
with equality holds if and only if $\pr\Om$ is a $\theta$-cap. 
Combining with the Minkowski-type formula
\begin{align*}
    \int_{\pr\Om}x\cdot(H_\Om\nu_\Om)\rd\mcH^n
    =\int_{\pr\Om}n(1-\cos\theta \nu_\Om\cdot E_{n+1})\rd\mcH^n,
\end{align*} we conclude that equality in Heintze-Karcher inequality holds and in turn, $\pr\Om$ is a $\theta$-cap.

Our aim is to generalize the above argument to sets of finite perimeter, in the spirit of Delgadino-Maggi \cite{DM19}. 
The key ingredient of Delgadino-Maggi's proof \cite{DM19} is to construct a large subset $\Om^\star$ of good points for a set of finite perimeter $\Om\subset\mfR^{n+1}$, with the property that:
for the classical parallel hypersurfaces map $\zeta$ considered in \cite{MR91} when restricted to the reduced boundary of $\Om$, one may show that
$|\Om^\star\setminus \zeta(Z)|=0$ and $|\Om\setminus \Om^\star|=0$, by virtue of which the argument based on the area formula is still applicable.
We point out that their construction of $\Om^\star$ is based on a subtle analysis of level-sets of the distance function from boundary, and to adapt their argument to our situation, the first requisite would be to discover a suitable capillary counterpart of the distance function considered in \cite{DM19}. This is done in light of the following observation.

In the proof of Jia-Wang-Xia-Zhang \cite{JWXZ22-B}, to show the surjectivity of $\zeta_\theta$, we have to show that for any $y\in\Om$, there is always some $x\in\Sigma$ such that $y$ can be flowed from $x$ through $\zeta_\theta$.
To this end, we consider the following foliation $\{\p B_r(y-r\cos\theta E_{n+1})\}_{r\geq0}$, and we are concerned with the first touching point of the foliation with $\Sigma$, when the radius increases from $0$.
The first touching point in this case somehow serves as a shifted `unique point projection' from $y$ to $\Sigma$, which motivates the following definition of
 shifted distance function.
Given a bounded, relatively open set of finite perimeter $\Om\subset\overline{\mfR^{n+1}_+}$ and $\theta\in(0,\pi)$,
let $\de:\mfR^{n+1}\to \mfR$ be the \textit{distance function with respect to $\pr\Om$}, defined as
\begin{align}
    \de(y)=\sup_{r\geq0}\{r:B_r(y)\cap\pr\Om=\varnothing\}.
\end{align}
and $\de_{\theta}:\mfR^{n+1}\to \mfR$ be the \textit{shifted distance function with respect to $\pr\Om$ and $\theta$}, defined as
\begin{align}\label{defn-weighteddistance}
    \de_{\theta}(y)=\sup_{r\geq0}\{r:B_r(y-r\cos\theta E_{n+1})\cap \pr\Omega=\varnothing\}.
\end{align}
One sees from definition that \begin{align}\label{eq-dist-theta}
    \de_\theta(y)=\de(y-\de_\theta(y)\cos\theta E_{n+1})
\end{align}
and for any $0<r<\de_{\theta}(y)$, there holds
\begin{align}\label{ineq-weighted-distance}
   \de(y-r\cos\theta E_{n+1})>r.
\end{align}
See \cref{figure2}.	\begin{figure}[H]
	\centering
	\includegraphics[height=7cm,width=12cm]{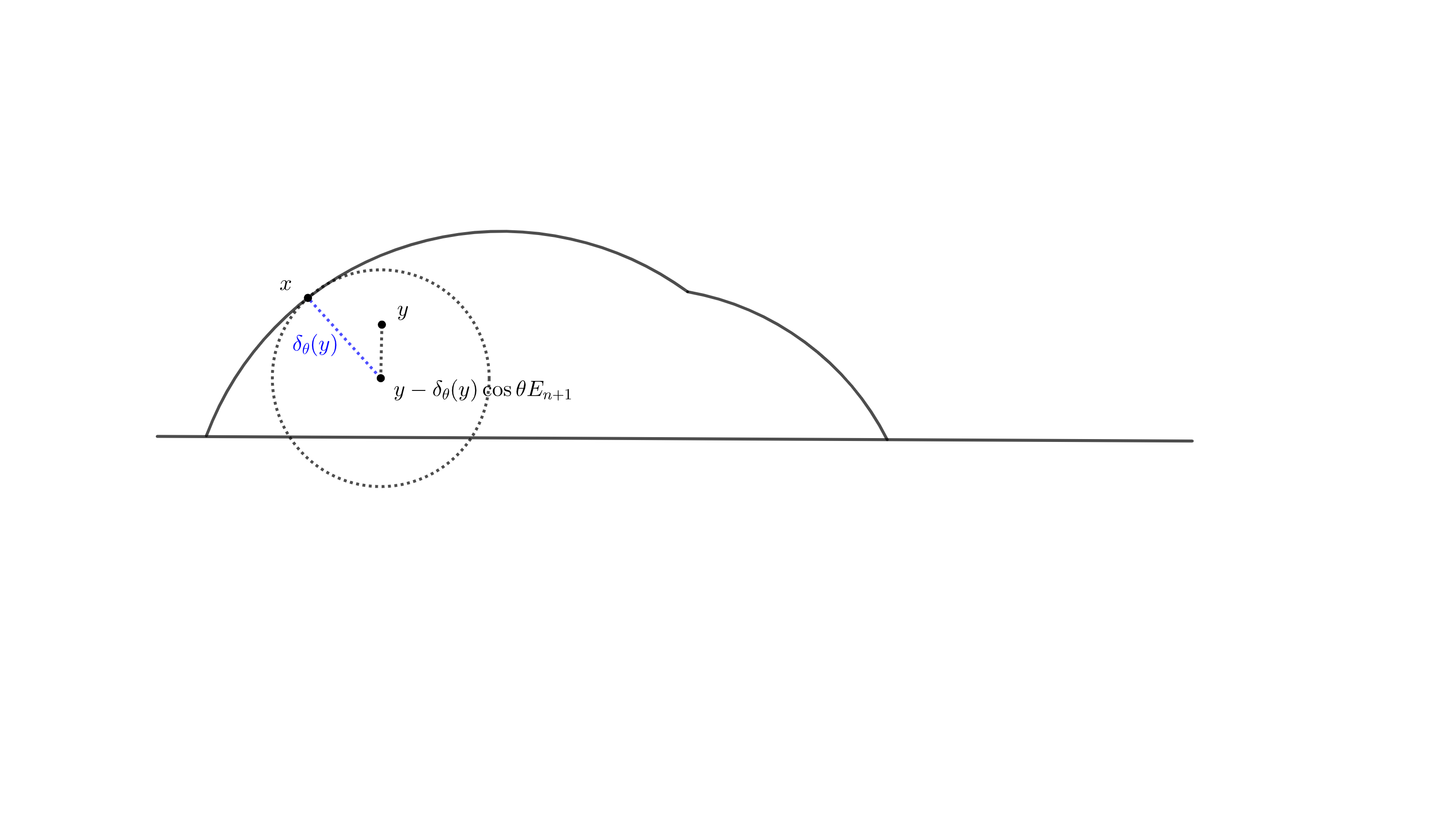}
	\caption{shifted distance function}
	\label{figure2}
\end{figure}
For $s>0$, we define the super level-set and level-set of $\de_{\theta}$ in $\overline\Omega$ by
\begin{align}\label{defn-Omegas}
    \Omega_s:=\{y\in\overline\Omega:\de_{\theta}(y)>s\},
    \quad\pr\Omega_s:=\{y\in\overline\Omega:\de_{\theta}(y)=s\}.
\end{align}
$\delta_\theta$ indeed plays the same role as the distance function considered in \cite{DM19}, and we may
adapt Delgadino-Maggi's approach to define the large subset $\Om^\star_\theta$ of good points in our setting as follows.
\begin{definition}[$\Gamma_{s;\theta}^t$ and $\Gamma_{s;\theta}^+$]\label{defn-Gamma_s^t}
\normalfont
Let $\Om$ be a bounded, relatively open set of finite perimeter in $\overline{\mfR^{n+1}_+}$ and $\theta\in(0,\pi)$.
For every $0<s<t<\infty$, we define $\Gamma_{s;\theta}^t$ to be the set of  $y\in\pr\Omega_s$ such that there exists a geodesic $\gamma_y: [0, t]\to\mfR^{n+1}$ with $\gamma_y(s)=y$ and $\de_\theta(\gamma_y(r))=r$ for every $r\in[0,t]$.

 Moreover, for every $0<s<\infty$, define
 \begin{align*}
\Gamma_{s;\theta}^+=\bigcup_{t>0}\Gamma_{s;\theta}^t,\quad \Om^\star_\theta=\bigcup_{s>0}\Gamma_{s;\theta}^+.
\end{align*}
\end{definition}

In the first step, we shall prove that $\Gamma_{s;\theta}^t$ is $C^{1,1}$-rectifiable (\cref{Thm-C11Rec}). 
Our strategy for proving the $C^{1,1}$-rectifiability theorem in terms of the shifted distance function follows largely from \cite[Theorem 1, step 1]{DM19}. With the $C^{1,1}$-rectifiability theorem, we are able to prove that $|\Om\setminus  \Om^\star_\theta|=0$.
Next, we prove $|\Om^\star_\theta\setminus  \zeta_\theta(Z)|=0$, following \cite[Theorem 1, step 4]{DM19}.
In contrast to the closed hypersurface case in \cite[Theorem 1, step 4]{DM19}, here we have to carefully deal with the boundary of the singular capillary CMC hypersurface $\pr\Om$ (see \cref{Defn-CapillaryCMC} for the definition).
Indeed, a crucial issue to be clarified is the counting measure of $g_{s;\theta}^{-1}(x)$ for $x\in{\rm reg}\pr\Om$ (see the proof of \cref{Thm-Alexandrov} for the definition of $g_{s;\theta}^{-1}$), and when $x$ lies in the interior of $\pr\Om$, i.e., $x\in\pr\Om\setminus\Gamma$, we could use the same argument in \cite{DM19}, by studying the blow-up feature of $\pr\Om$ at $x$, and the maximum principle for stationary rectifiable cones, to conclude that the counting measure is at most $2$.
Likewise,
we have to study the boundary behaviors of $\pr\Om$ and $T$ along $\Gamma$, and consider the blow-up process at every $x\in\Gamma$.
To this end, we first introduce a new definition of $\theta$-stationary triple in \cref{Defn-contactangle-triple} (which is satisfied by the blow-up limit of the singular capillary CMC hypersurface under our regularity assumption in \cref{Thm-Alexandrov}), and then we exploit new boundary strong maximum principles \cref{Lem-MP3}, \cref{Cor-MP3} to study the blow-up process.

Once the properties $|\Om\setminus  \Om^\star_\theta|=0$ and $|\Om^\star_\theta\setminus  \zeta_\theta(Z)|=0$ are established, we can proceed Jia-Wang-Xia-Zhang's argument  \cite{JWXZ22-B} in the framework of sets of finite perimeter.

In the end, we make further comments on the technical assumption $\mcH^{n-1}({\rm sing}\pr\Om)=0$ in \cref{Thm-Alexandrov}.
\begin{remark}\label{Rem-natural-assumption}
    \normalfont
    As already discussed, in \cref{Thm-Alexandrov} $\mcH^{n-1}({\rm sing}\pr\Om)=0$ is a technical but crucial assumption.
    Here we make some illustrations.
    \begin{enumerate}
        \item To study the boundary behavior of $\pr\Om$ and $T$ along $\Gamma$, we need to consider the $\theta$-stationary triple, which is well-defined thanks to \cref{Lem-divergenctheorem}.
        The proof of \cref{Lem-divergenctheorem} is built on the smooth cut-off functions near singularities of $\pr\Om$, whose existence relies largely on the assumption that $\mcH^{n-1}({\rm sing}\pr\Om)=0$.
        \item As we have mentioned in \cref{Rem-assumption-mainThm}(1), Young's law can be deduced from $\mcH^n({\rm sing}\pr\Om)=\mcH^{n-1}({\rm sing}\Gamma)=0$ and the $\mcA$-stationarity of $\Om$ thanks to the Allard's regularity theorem for free boundary rectifiable varifolds \cite[Theorem 4.13]{GJ86}.
        However, it is still an open question whether an Allard-type regularity theorem holds for capillary submanifolds with general contact angles.
        Despite the lack of the Allard-type regularity result, we could still obtain Young's law for $\mcA$-stationary set in \cref{Prop-Young}, provided that $\mcH^{n-1}({\rm sing}\pr\Om)=0$.
    \end{enumerate}
\end{remark}

\noindent{\bf Organization of the paper.}
In \cref{Sec-2} we collect some background material from geometric measure theory and prove the boundary maximum principles that are useful for the blow-up analysis in the proof of the Alexandrov-type theorems.
In \cref{Sec-3} we study the fine properties of $\Gamma_{s;\theta}^t$ and then we prove the rectifiability result \cref{Thm-C11Rec}.
In \cref{Sec-5}, we prove our main result \cref{Thm-Alexandrov}.

\

\noindent{\bf Acknowledgements.}
We are indebted to Professor Guofang Wang for stimulating discussions on this topic and his constant support.

\section{Preliminaries}\label{Sec-2}

\subsection{Notations}\label{Sec-2-1}
When considering the topology of $\mfR^{n+1}$,
we denote by $\overline{\Om}$ the topological closure of a set $\Om$, by ${\rm int}(\Om)$ the topological interior of $\Om$, and by $\p \Om$ the topological boundary of $\Om$.
In terms of the subspace topology (relative topology), we use the following notations:
let $X$ be a topological space and $S$ be a subspace of $X$, we use
${\rm cl}_XS, {\rm int}_XS, \p_XS$
to denote the closure, the interior, and the boundary, respectively, of $S$ in the topological space $X$.
In particular,
for a bounded, relatively open set of finite perimeter $\Om\subset\overline{\mfR^{n+1}_+}$.
We denote by $\pr\Om
:=\p_{\overline{\mfR^{n+1}_+}}\Om
=\overline{\p\Om\cap\mfR^{n+1}_+}$ the relative boundary of $\Om$ in the upper half-space $\overline{\mfR^{n+1}_+}$,
let $T:=
{\rm cl}_{\p\mfR^{n+1}_+}\left(\p\Om\setminus\pr\Om\right)$,
and denote by
$\Gamma:=\pr\Om\cap\p\mfR^{n+1}_+$,
see \cref{figure1} for illustration.
	\begin{figure}[H]
	\centering
	\includegraphics[height=6cm,width=10cm]{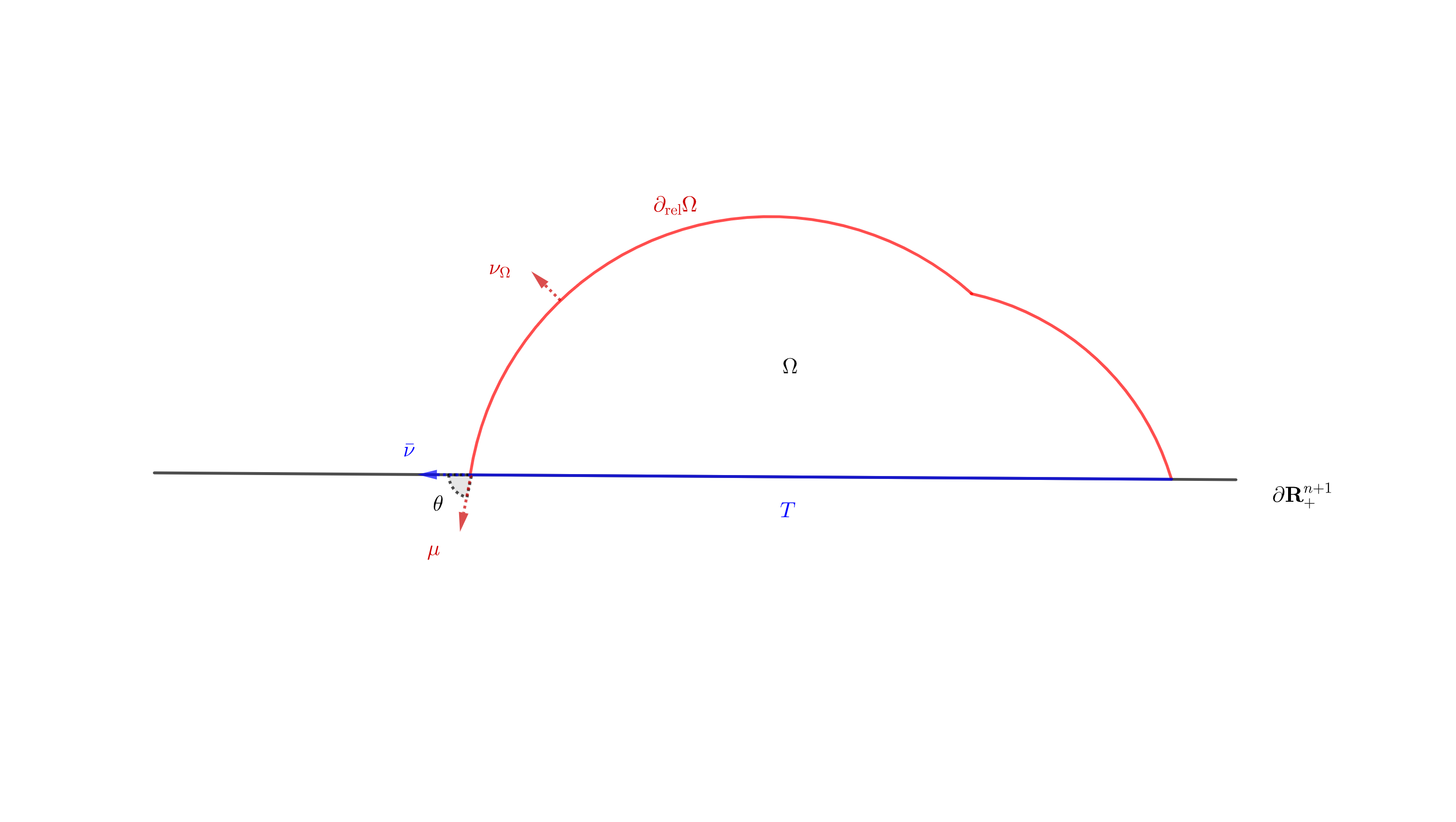}
	\caption{Notations}
	\label{figure1}
\end{figure}
\begin{itemize}
    \item $\bsmu_r$ is the homothety map $y\mapsto ry$;
    \item $\bstau_x$ is the translation map $y\mapsto y-x$;
    \item $\bseta_{x,r}$ is the composition $\bsmu_{r^{-1}}\circ\bstau_x$, i.e., $y\mapsto \frac{y-x}{r}$;
    \item $\mcH^k$ is the $k$-dimensional Hausdorff measure on $\mfR^{n+1}$;
    \item $\mcL^{n+1}$ is the Lebesgue outer measure on $\mfR^{n+1}$;
    \item $B_r(x)$ is the open ball in $\mfR^{n+1}$, centered at $x$ with radius $r$;
    \item $\omega_n$ is the volume of the $n$-dimensional unit ball in $\mfR^{n+1}$;
    \item ${\rm spt}$ is the support of a measure, see \cite[Section 2.4]{Mag12}.
    \item In this paper, we work with the following spaces of vector fields: 
    \begin{align*}
    	&\mathfrak{X}_c(\mfR^{n+1}):=\{\text{the space of all $C^1$-vector fields on }\mfR^{n+1}\text{ with compact support}\},\\
    	&\mathfrak{X}(\overline{\mfR^{n+1}_+}):=\left\{X\in\mathfrak{X}_c(\mfR^{n+1}):X(p)\in T_p\overline{\mfR^{n+1}_+}\text{ for all }p\in\overline{\mfR^{n+1}_+}\right\},\\
    	&\mathfrak{X}_c(\overline{\mfR^{n+1}_+}):=\left\{X\in\mathfrak{X}(\overline{\mfR_+^{n+1}}):X\text{ has relatively compact support in   the open half-space }\mfR^{n+1}_+\right\},\\
    	&\mathfrak{X}_t(\overline{\mfR^{n+1}_+}):=\left\{X\in\mathfrak{X}(\overline{\mfR^{n+1}_+}):X(p)\in T_p(\p\mfR^{n+1}_+)\text{ for all }p\in\p\mfR^{n+1}_+\right\}.
    \end{align*}
    Notice that at any $x\in\p\mfR^{n+1}_+$, $T_x\mfR^{n+1}_+$ is exactly the $(n+1)$-dimensional half-space in $\mfR^{n+1}$ with boundary $T_x\p \mfR^{n+1}_+$.
\end{itemize}


\subsection{Rectifiable sets}\label{Sec-2-3}
A Borel set $M\subset\mfR^{n+1}$ is a \textit{locally $\mcH^n$-rectifiable set} if $M$ can be covered, up to a $\mcH^n$-negligible set, by countably many Lipschitz images of $\mfR^n$ into $\mfR^{n+1}$, and if $\mcH^n\llcorner M$ is locally finite on $\mfR^{n+1}$. $M$ is called \textit{$\mcH^n$-rectifiable} if, in addition, $\mcH^n(M)<\infty$; $M$ is called \textit{normalized}, if $M={\rm spt}(\mcH^n\llcorner M)$, i.e.,
\begin{align*}
	x\in M\quad\text{if and only if }\mcH^n(B_\rho(x)\cap M)>0\quad\text{for all }\rho>0.
\end{align*}
\begin{proposition}[{\textit{area formula for $k$-rectifiable sets}, \cite[Theorem 11.6]{Mag12}}]
	For $1\leq k\leq m$, if $A\subset\mfR^n$ is a locally $\mcH^k$-rectifiable set and $f:\mfR^n\ra\mfR^m$  is a Lipschitz map, then
	\begin{align}\label{formu-area}
		\int_{\mfR^m}\mcH^0\left(A\cap \left\{ f=y\right\} \right)\rd\mcH^k(y)=\int_A {\rm J}^Af(x)\rd\mcH^k(x),
	\end{align} 
	where $\left\{ f:=y\right\}=\left\{x\in\mfR^n: f(x)=y\right\}$, ${\rm J}^Af(x)$ is the Jacobian of $f$ with respect to $A$ at $x$ (see for example \cite[(11.1)]{Mag12}), which exsits for $\mcH^k$-a.e. $x\in A$.
\end{proposition}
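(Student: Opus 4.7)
The plan is to establish the area formula by reducing it, via rectifiability and a Lusin-type approximation, to the classical area/change-of-variables formula for $C^1$ maps on $C^1$ $k$-dimensional submanifolds of $\mfR^n$.

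First, I would check that ${\rm J}^Af(x)$ is actually well-defined for $\mcH^k$-a.e. $x\in A$. By the structure theorem for rectifiable sets, at $\mcH^k$-a.e. point $x\in A$ there exists an approximate tangent $k$-plane $T_xA\subset\mfR^n$. Combining this with a Rademacher-type theorem stating that any Lipschitz map on $\mfR^n$ is approximately differentiable $\mcH^k$-a.e. on $A$ with respect to $T_xA$, one obtains a well-defined tangential differential $d^Af(x):T_xA\to\mfR^m$, and then sets ${\rm J}^Af(x)=\sqrt{\det\bigl((d^Af(x))^{*}d^Af(x)\bigr)}$.

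Next, I would reduce to pieces sitting inside $C^1$ submanifolds. By definition of rectifiability, $A$ is covered, up to a $\mcH^k$-null set, by countably many Lipschitz images $g_i(\mfR^k)$; using Kirszbraun to extend $g_i$ to all of $\mfR^k$ and then a Lusin-type approximation (equivalently, Whitney's extension theorem) for Lipschitz functions by $C^1$ functions, one decomposes each $g_i(\mfR^k)$, up to an arbitrarily small $\mcH^k$-measure error, into pieces on which $g_i$ coincides with a $C^1$ map. Exhausting over a sequence $\varep_\ell\to 0$, one obtains, modulo a $\mcH^k$-null set, a countable Borel decomposition $A=\bigsqcup_j A_j$ with each $A_j$ contained in a $C^1$ $k$-dimensional submanifold $M_j\subset\mfR^n$ on which the tangential differential of $f$ agrees with the intrinsic submanifold notion, so that ${\rm J}^Af$ coincides with the usual $C^1$ Jacobian $\mcH^k$-a.e. on $A_j$.

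Then I would further split each $A_j$ into countably many Borel subsets on which $f$ is injective. On the \emph{regular part} $\{{\rm J}^Af>0\}\cap A_j$, the tangential differential $d^Af(x):T_xM_j\hookrightarrow\mfR^m$ is injective; the inverse function theorem (applied to $f\circ\varphi$ for local $C^1$ charts $\varphi$ of $M_j$) shows that $f|_{M_j}$ is locally injective there, and a Vitali covering argument gives a countable Borel partition into globally injective pieces. On the \emph{critical part} $\{{\rm J}^Af=0\}\cap A_j$, a Sard-type lemma for Lipschitz maps (partition the parameter domain into small cubes on which $f$ is approximated by a degenerate affine map, and estimate the $\mcH^k$-measure of its image) shows that $\mcH^k\bigl(f(A_j\cap\{{\rm J}^Af=0\})\bigr)=0$, so this part contributes zero to the left-hand side; the contribution to the right-hand side is zero by definition.

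Finally, on each injective $C^1$ piece the identity reduces, via the charts of $M_j$, to the standard multivariable change-of-variables formula, giving the equality for that piece. Summing over $j$, the right-hand side is additive by construction and the left-hand side picks up exactly one contribution to the $\mcH^0$-fiber count $\mcH^0(A\cap\{f=y\})$ from each injective piece whose image contains $y$, yielding \eqref{formu-area}. The main obstacle is the third step, where the injectivity decomposition and the Sard-type estimate must be combined carefully because $f$ is merely Lipschitz; the standard way through this obstacle is a further Lusin approximation of $f|_{M_j}$ by a $C^1$ map off an arbitrarily small $\mcH^k$-measure exceptional set, after which smooth multivariable calculus applies and a diagonal argument removes the exceptional set.
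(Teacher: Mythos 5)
The paper does not actually prove this proposition: it is quoted as a known result, with the proof delegated entirely to the cited reference \cite[Theorem 11.6]{Mag12} (ultimately Federer's area formula), so there is no in-paper argument to compare against. Your outline is essentially the standard textbook proof found in that reference: Rademacher-type tangential differentiability to make ${\rm J}^Af$ well defined, covering of $A$ up to an $\mcH^k$-null set by pieces of $C^1$ $k$-submanifolds via Kirszbraun extension and Whitney/Lusin $C^1$ approximation of the parametrizations, a decomposition into injective pieces on $\{{\rm J}^Af>0\}$, a Sard-type estimate showing $\mcH^k\bigl(f(A\cap\{{\rm J}^Af=0\})\bigr)=0$, and the classical change-of-variables formula on each injective piece, so the route is the expected one and is sound as a sketch. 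If you were to write it out in full, two points need to be made explicit. First, the Lusin-type $C^1$ approximation of $f$ has to be performed in the $k$-dimensional parameter domain (i.e. for $f\circ\varphi$ with $\varphi$ a chart or Lipschitz parametrization, where the exceptional set is controlled in $\mcL^k$), or replaced by the classical Lipschitz linearization lemma; the usual Lusin/Whitney statement for $f$ on $\mfR^n$ only bounds the $\mcL^n$-measure of the set where $f$ differs from a $C^1$ map, which gives no control on $\mcH^k\llcorner A$ for $k<n$. Second, the $\mcH^k$-measurability of $y\mapsto\mcH^0\left(A\cap\{f=y\}\right)$ is itself part of the statement and should be recorded as a consequence of your countable decomposition into injective pieces, on which the fiber count is a countable sum of indicators of Lipschitz images; note also that fibers over the image of the critical part may be infinite, but since that image is $\mcH^k$-null its contribution to the left-hand side of \eqref{formu-area} is still zero.
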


\begin{lemma}[{\textit{tangential property of Lipschitz function along rectifiable sets}, \cite[Section 2.1(iv)]{DM19}}]\label{DM-2.1(iv)}
	Let $M\subset\mfR^{n+1}$ be a locally $\mcH^n$-rectifiable set and $f:M\ra\mfR^{n+1}$ is a Lipschitz map defined on $M$, then for any Lipschitz functions $F,G:\mfR^{n+1}\ra\mfR^{n+1}$ such that $F=G=f$ on $M$, we have
	\begin{align}
		\nabla^MF=\nabla^MG\quad\mcH^n\text{-a.e. on }M. 
	\end{align}
	In particular, if $g:\mfR^n\ra\mfR^{n+1}$ is a Lipschitz map and $E\subset\mfR^n$ is a Borel set,
	then $T_xM=(\nabla g)_{g^{-1}(x)}[\mfR^n]$ for $\mcH^n$-a.e. $x\in M\cap g(E)$, with
	\begin{align}\label{eq-DM-2.5}
		(\nabla^MF)_x[\tau]
		=\nabla(F\circ g)_{g^{-1}(x)}[(\nabla g)_x^{-1}[\tau]]\quad\forall\tau\in T_xM.
	\end{align}
	Here $\nabla^MF(x)$ denotes the tangential differential of $F$ with respect to $M$ at $x$, which exsits for $\mcH^n$-a.e. $x\in M$ by virtue of the Rademacher-type theorem \cite[Theroem 11.4]{Mag12}.
\end{lemma}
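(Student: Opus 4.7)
The plan is to reduce the first assertion to the following claim: if $H:\mfR^{n+1}\to\mfR^{n+1}$ is Lipschitz and $H\equiv 0$ on $M$, then $\nabla^M H=0$ $\mcH^n$-a.e.\ on $M$. Applying this to $H:=F-G$, which is Lipschitz on $\mfR^{n+1}$ and vanishes on $M$ since $F=G=f$ there, gives $\nabla^M F=\nabla^M G$ $\mcH^n$-a.e.\ on $M$, as desired.

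To prove the claim, I would combine the Rademacher-type theorem on rectifiable sets (\cite[Theorem 11.4]{Mag12}) with the existence of the approximate tangent plane at $\mcH^n$-a.e.\ point of the locally rectifiable set $M$. Fix any point $x\in M$ at which both $\nabla^M H(x)$ and $T_xM$ exist; this happens on a full-measure subset of $M$. The defining property of the tangential differential is that for every sequence $\{y_k\}\subset M$ with $y_k\to x$ and $(y_k-x)/|y_k-x|\to\tau\in T_xM$, one has
\begin{align*}
\nabla^M H(x)[\tau]=\lim_{k\to\infty}\frac{H(y_k)-H(x)}{|y_k-x|}.
\end{align*}
Since $H\equiv 0$ on $M$, every such difference quotient is zero, so $\nabla^M H(x)[\tau]=0$. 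The approximate tangent plane is characterized (up to $\mcH^n$-negligible exceptional sets) by the fact that every $\tau\in T_xM$ is realized as such a limit direction from $M$, so $\nabla^M H(x)=0$ on $T_xM$, and by definition $\nabla^M H(x)$ is the zero linear map on $T_xM$.

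For the second (``in particular'') assertion, I would work at $\mcH^n$-a.e.\ $x\in M\cap g(E)$ and invoke three standard facts simultaneously: $g$ is classically differentiable at $z:=g^{-1}(x)$ by Rademacher's theorem applied on $\mfR^n$; the approximate tangent plane $T_xM$ exists by rectifiability; and the Jacobian ${\rm J}^A g(z)$ appearing in the area formula \eqref{formu-area} is strictly positive on a set carrying the $\mcH^n$-mass of $M\cap g(E)$, so that $\nabla g(z):\mfR^n\to\mfR^{n+1}$ is injective with image coinciding with $T_xM$. Writing $\tau=\nabla g(z)[\xi]$ with $\xi=(\nabla g)_z^{-1}[\tau]$ and evaluating the tangential differential along the Lipschitz curve $t\mapsto g(z+t\xi)\in M$, the chain rule yields
\begin{align*}
(\nabla^M F)_x[\tau]=\frac{d}{dt}\bigg|_{t=0} F\bigl(g(z+t\xi)\bigr)=\nabla(F\circ g)_z[\xi]=\nabla(F\circ g)_{g^{-1}(x)}\bigl[(\nabla g)_x^{-1}[\tau]\bigr],
\end{align*}
which is \eqref{eq-DM-2.5}; independence of the particular Lipschitz extension $F$ is exactly the first part of the lemma.

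The only subtle step is the first one, namely extracting enough directions from $M$ at a typical $x$ to populate the full tangent space $T_xM$. Everything else is a routine chaining together of the area formula, Rademacher's theorem, and the definition of the approximate tangent plane; I would merely invoke these from \cite{Mag12}. The main obstacle in presentation (not in mathematics) is making precise, without unnecessary digression, the ``$\mcH^n$-a.e.\ the approximate tangent cone spans $T_xM$ via sequences in $M$'' statement, which is why one usually packages the argument by invoking the Rademacher-type theorem for rectifiable sets as a black box and directly reading off that $\nabla^M H$ is an intrinsic object.
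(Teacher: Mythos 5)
This lemma is not proved in the paper at all: it is quoted as background from \cite[Section 2.1(iv)]{DM19} (with the Rademacher-type theorem of \cite{Mag12} behind it), so the comparison can only be with the standard argument that citation packages. For the first assertion, your reduction to showing that $H:=F-G$, Lipschitz and vanishing on $M$, has $\nabla^M H=0$ $\mcH^n$-a.e.\ is exactly the right move. The one imprecision is calling ``for \emph{every} sequence $y_k\in M$, $y_k\to x$, $(y_k-x)/|y_k-x|\to\tau$, the difference quotients converge to $\nabla^M H(x)[\tau]$'' the defining property of the tangential differential: as a full limit over all of $M$ this is false in general (stray low-density or $\mcH^n$-negligible pieces of $M$ can carry bad sequences). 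What is true, and is all you need, is the existence version you also invoke: covering $M$ up to a null set by $C^1$ hypersurfaces $\Sigma_j$, at $\mcH^n$-a.e.\ $x\in M\cap\Sigma_j$ the point has density one in $\Sigma_j$ for $M\cap\Sigma_j$ and $H|_{\Sigma_j}$ is differentiable there, so every $\tau\in T_xM=T_x\Sigma_j$ is realized by a sequence lying in $M\cap\Sigma_j$ along which the quotients converge to $\nabla^{\Sigma_j}H(x)[\tau]=\nabla^M H(x)[\tau]$, which therefore vanishes. With that rephrasing the first part is sound.

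The second (``in particular'') assertion is where your proposal has a genuine gap. You compute $(\nabla^MF)_x[\tau]$ by differentiating $F$ along $t\mapsto g(z+t\xi)$ and assert this curve lies in $M$; it need not, since only $g(E)$, not $g(\mfR^n)$, is contained in $M$ and $z+t\xi$ need not stay in $E$. More importantly, $F$ is only Lipschitz, hence in general not differentiable at $x$ as a function on $\mfR^{n+1}$, so differentiating it along a curve that is merely \emph{tangent} to $T_xM$ but leaves $M$ at rate comparable to $t$ does not compute the tangential differential --- that identification is precisely the content of \eqref{eq-DM-2.5}, so the step is circular as written. Likewise, positivity of the area-formula Jacobian gives injectivity of $(\nabla g)_z$ but not, by itself, that its image equals $T_xM$. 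The standard repair is a Lusin--Whitney approximation: take compact $E'\subset E$ with $g=h$ and $\nabla g=\nabla h$ $\mcL^n$-a.e.\ on $E'$ for some $C^1$ map $h$; at $\mcH^n$-a.e.\ $x=g(z)$ with $z$ a density point of $E'$, the $C^1$ image $h(\mfR^n)$ contains a density-one portion of $M$ near $x$, which gives $T_xM=(\nabla h)_z[\mfR^n]=(\nabla g)_z[\mfR^n]$, and then the locality statement of the first part (applied with $h(\mfR^n)$ in place of $M$), the classical chain rule for the Lipschitz map $F\circ h$, and the a.e.\ identity $\nabla(F\circ h)=\nabla(F\circ g)$ on $E'$ yield \eqref{eq-DM-2.5}; exhausting $E$ by such $E'$ concludes. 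Without this approximation-and-locality step, your argument for the second half does not close.
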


\subsection{Rectifiable varifolds}\label{Sec-2-4}

We now quickly recall some basic notions of rectifiable varifolds in $\mfR^{n+1}$, and we refer to the standard references \cite{Allard72,Sim83} for details.

Let $M$ be a locally $\mcH^k$-rectifiable set and consider a Borel measurable function $\psi\in L^1_{\rm loc}(\mcH^k\llcorner M;\mfR^+)$.
The rectifiable varifold $V={\rm var}(M,\psi)$ defined by $M$ and $\psi$ is the Radon measure on $\mfR^{n+1}\times G(n+1,k)$, defined as
\begin{align*}
	\int_{\mfR^{n+1}\times G(n+1,k)}\Phi\rd{\rm var}(M,\psi)
	=\int_M\Phi(x,T_xM)\psi(x)\rd\mcH^k(x),
\end{align*}
for every bounded, compactly supported Borel function $\Phi$ on $\mfR^{n+1}\times G(n+1,k)$.
In particular, for any $X\in\mathfrak{X}_c(\mfR^{n+1})$, the well-known first variatioanl formula reads
\begin{align*}
	\de{\rm var}(M,\psi)[X]
	=\int_M{\rm div}_MX(x)\psi(x)\rd\mcH^k(x).
\end{align*}

The \textit{weight measure} of $V={\rm var}(M,\psi)$ is denoted by $\vvert V\vvert=\psi\mcH^k\llcorner M$.
As in \cite[Definition 42.3]{Sim83}, we denote ${\rm VarTan(V,x)}$ to be the set of \textit{varifold tangents} of $V$ at some $x\in{\rm spt}\vvert V\vvert.$
By the compactness of Radon measures \cite[Theorem 4.4]{Sim83}, ${\rm VarTan}(V,x)$ is compact and non-empty provided that the upper density $\Theta^{\ast k}(\vvert V\vvert,x)$ is finite.


\begin{definition}[Stationary varifolds]
	\normalfont
	A rectifiable varifold ${\rm var}(M,\psi)$ is said to be stationary in $\mfR^{n+1}$ if $\delta{\rm var}(M,\psi)[X]=0$ for any $X\in\mathfrak{X}_c(\mfR^{n+1})$,
	and is said to be stationary in $\overline{\mfR^{n+1}_+}$ with free boundary if $\delta {\rm var}(M,\psi)[X]=0$ for any $X\in\mathfrak{X}_t(\overline{\mfR^{n+1}_+})$.
\end{definition}
The following useful reflection trick will be needed in our proof, see \cite[(3.2)]{Allard75}, \cite[Remark 4.11(iii)]{GJ86} and \cite[Lemma 2.2]{LZ21-a}.
\begin{lemma}[Reflection Principle]\label{Lem-ReflectionPrinciple}
	Let $E_{n+1}:=(0,\ldots,0,1)\in\overline{\mfR^{n+1}_+}$.
	Let $\theta_{n+1}:\mfR^{n+1}\ra\mfR^{n+1}$ denote the reflection map about the unit vector $E_{n+1}$, i.e., $\theta_{n+1}(u)=u-2(u\cdot E_{n+1})E_{n+1}$.
	For any rectifiable varifold $V:={\rm var}(M,\psi)$, define the doubled varifold
	\begin{align}\label{defn-barV}
		\bar V:=V+(\theta_{n+1})_\#V.
	\end{align}
	If $V$ is stationary in $\overline{\mfR^{n+1}_+}$ with free boundary,
	then $\bar V$ is stationary in $\mfR^{n+1}$.
\end{lemma}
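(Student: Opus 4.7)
The plan is to use the classical symmetrization trick: convert an arbitrary test vector field on $\mfR^{n+1}$ into one admissible for the free-boundary stationarity of $V$ by averaging with its $\theta_{n+1}$-reflection. Given any $X\in\mathfrak{X}_c(\mfR^{n+1})$, I would set $\widetilde X(y):=\theta_{n+1}(X(\theta_{n+1}(y)))$ and $Y:=X+\widetilde X$, and reduce the lemma to two claims: (a) $\delta\bar V[X]=\delta V[Y]$, and (b) $Y|_{\overline{\mfR^{n+1}_+}}\in\mathfrak{X}_t(\overline{\mfR^{n+1}_+})$. Once both are established, the free-boundary stationarity hypothesis forces $\delta V[Y]=0$, and hence $\delta\bar V[X]=0$ for every $X\in\mathfrak{X}_c(\mfR^{n+1})$.

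For (a), starting from $\delta\bar V[X]=\delta V[X]+\delta((\theta_{n+1})_\#V)[X]$, the task is to identify the second summand with $\delta V[\widetilde X]$. Writing $(\theta_{n+1})_\#V={\rm var}(\theta_{n+1}(M),\psi\circ\theta_{n+1})$, the change of variables $y=\theta_{n+1}(x)$ transports the first-variation integral back to $M$ with weight $\psi$; what remains is the tangential-divergence identity ${\rm div}_{\theta_{n+1}(M)}X\circ\theta_{n+1}={\rm div}_M\widetilde X$, which I would verify by applying the chain rule to $\widetilde X=\theta_{n+1}\circ X\circ\theta_{n+1}$ in any orthonormal frame of $T_xM$ and invoking that $\theta_{n+1}$ is a self-adjoint linear isometric involution carrying $T_xM$ onto $T_{\theta_{n+1}(x)}\theta_{n+1}(M)$.

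For (b), observe that $Y$ is symmetric under $\theta_{n+1}$ by construction: $\theta_{n+1}(Y(\theta_{n+1}(y)))=Y(y)$ for every $y\in\mfR^{n+1}$. At a boundary point $y\in\p\mfR^{n+1}_+$ one has $\theta_{n+1}(y)=y$, so $Y(y)$ is fixed by $\theta_{n+1}$ and therefore has vanishing $(n+1)$-th component, i.e. $Y(y)\in T_y\p\mfR^{n+1}_+$. Combined with $Y\in C^1_c(\mfR^{n+1};\mfR^{n+1})$, this places $Y|_{\overline{\mfR^{n+1}_+}}$ in $\mathfrak{X}_t(\overline{\mfR^{n+1}_+})$; since ${\rm spt}\vvert V\vvert\subset\overline{\mfR^{n+1}_+}$, the value $\delta V[Y]$ depends only on this restriction, and the free-boundary hypothesis yields $\delta V[Y]=0$. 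The only mildly delicate step is the divergence identity in (a); after that, tangency at $\p\mfR^{n+1}_+$ falls out automatically from the symmetrization, so no serious obstacle remains.
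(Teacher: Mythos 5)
Your proof is correct and is exactly the standard symmetrization argument: the paper itself gives no proof of this lemma, delegating it to the cited references (Allard, Gr\"uter--Jost, Li--Zhou), and those references argue precisely as you do, by averaging a test field with its reflection and using that $\theta_{n+1}$ is a self-adjoint linear isometric involution to get both the tangential-divergence identity and the tangency of $Y$ along $\p\mfR^{n+1}_+$. The computation in (a) and the symmetry argument in (b) are both sound, so nothing is missing.
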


\subsection{Sets of finite perimeter}\label{Sec-2-2}
For basic knowledge regarding sets of finite perimeter, we refer to the monograph \cite{Mag12} (in particular, Chapter 15) for a detailed account.

Given a Lebesgue measurable set $\Om\subset\mfR^{n+1}$, we say that $\Om$ is a \textit{set of finite perimeter in $\mfR^{n+1}$} if \begin{align*}
	\sup\left\{\int_{\mfR^{n+1}} {\rm div}X\rd\mcL^{n+1}: X\in C^1_c(\mfR^{n+1};\mfR^{n+1}),  \vert X\vert\leq1\right\}<\infty.
\end{align*}
An equivalent characterization of sets of finite perimeter (see \cite[Proposition 12.1]{Mag12}) is that: there exists a $\mfR^{n+1}$-valued Radon measure $\mu_\Om$ on $\mfR^{n+1}$ such that for any $X\in C_c^1(\mfR^{n+1};\mfR^{n+1})$,
\begin{align}\label{eq-DM19-2-10}
	\int_\Om{\rm div}X\rd\mcL^{n+1}
	=\int_{\mfR^{n+1}}X\cdot\rd\mu_\Om.
\end{align}
$\mu_\Om$ is called the \textit{Gauss-Green measure of $\Om$}.
The \textit{relative perimeter of $\Om$ in $E\subset\mfR^{n+1}$}, and the \textit{perimeter of $\Om$}, are defined as
\begin{align*}
	P(\Om;E)=\vert\mu_\Om\vert(E),\quad P(\Om)=\vert\mu_\Om\vert(\mfR^{n+1}).
\end{align*}
Regarding the topological boundary of a set of finite perimeter $E$, one has (see \cite[Proposition 12.19]{Mag12})
\begin{align*}
	{\rm spt}\mu_\Om
	=\{x\in\mfR^{n+1}:0<\vert \Om\cap B_r(x)\vert<\om_{n+1}r^{n+1},\quad\forall r>0\}\subset\p\Om.
\end{align*}

The \textit{reduced boundary} $\p^\ast \Om$ is the set of those $x\in{\rm spt}\mu_\Om$ such that the limit
\begin{align*}
	\lim_{r\ra0^+}\frac{\mu_\Om(B_r(x))}{\vert\mu_\Om\vert(B_r(x))}\text{ exists and belongs to }\mfS^{n}.
\end{align*}
The Borel vector field $\nu_\Om:\p^\ast\Om\ra\mfS^n$ is called the \textit{measure-theoretic outer unit normal to $\Om$}, and there holds
\begin{align*}
	\overline{\p^\ast\Om}={\rm spt}\mu_\Om.
\end{align*}
Moreover, when $\Om$ is a bounded, relatively open set of finite perimeter in $\overline{\mfR^{n+1}_+}$, the reduced boundary is locally $\mcH^n$-rectifiable and the sets $\pr\Om,T$ are normalized.
By \cite[(2.1)]{DePM15},
\begin{align*}
	\mu_\Om=\nu_\Om\mcH^n\llcorner(\p^\ast\Om\cap \mfR^{n+1}_+)-E_{n+1}\mcH^n\llcorner(\p^\ast\Om\cap\p\mfR^{n+1}_+),
\end{align*}
and thus \eqref{eq-DM19-2-10} reads
\begin{align*}
	\int_\Om{\rm div}X\rd\mcL^{n+1}
	=\int_{\p^\ast\Om\cap\mfR^{n+1}_+}X\cdot\nu_\Om\rd\mcH^n
	-\int_{\p^\ast\Om\cap\p\mfR^{n+1}_+}X\cdot E_{n+1}\rd\mcH^n.
\end{align*}
Under the regularity assumption $\mcH^{n}({\rm sing}\pr\Om)=0$, the above equality then takes the form
\begin{align}\label{formu-GaussGreen}
	\int_\Om{\rm div}X\rd\mcL^{n+1}
	=\int_{\pr\Om}X\cdot\nu_\Om\rd\mcH^n-\int_T X\cdot E_{n+1}\rd\mcH^n.
\end{align}

To every set of finite perimeter in $\overline{\mfR^{n+1}_+}$, we can always associate in a natural way the integer rectifiable varifolds ${\rm var}(\p^\ast\Om\cap\mfR^{n+1}_+,1)$ and ${\rm var}(\p^\ast\Om\cap\p\mfR^{n+1}_+,1)$. For simplicity we denote them respectively by ${\rm var}(\p^\ast\Om\cap\mfR^{n+1}_+)$ and ${\rm var}(\p^\ast\Om\cap\p\mfR^{n+1}_+)$.

\subsection{Critical points of the $\mcA$-functional}\label{Sec-2-6}

Recall that the $\mcA$-functional
is defined in \cref{Defn-A-functional}, which follows from \cite[Definition 1.1]{LZZ21}.
In fact, given $\theta\in(0,\pi)$, a constant $c>0$, and a set of finite perimeter $\Om$ as in \cref{Defn-A-functional},
the first variation for $\mcA$ along $X\in\mathfrak{X}(\overline{\mfR^{n+1}_+})$ is (see \cite[(1.7)]{LZZ21})
\begin{align}\label{formu-LZZ21-1.7}
	\de \mcA\mid_\Om(X)
	=\int_{\p^\ast\Om\llcorner\mfR^{n+1}_+}{\rm div}_{\p\Om}X\rd\mcH^n+\cos(\pi-\theta)\int_{\p^\ast\Om\llcorner\p\mfR^{n+1}_+}{\rm div}_{\p\Om}X\rd\mcH^n-c\int_{\p^\ast\Om}X\cdot\nu_{\Om}\rd\mcH^n,
\end{align}
which implies the following facts:
if $\Om$ is stationary for $\mcA$ as in \cref{Defn-A-functional}, then
\begin{enumerate}
	\item when $\theta\in(0,\pi)$, $\Om$ is stationary for the free energy functional
	\begin{align*}
		\mathcal{E}(\Om)=P(\Om; \mfR^{n+1}_+)-\cos\theta P(\Om; \p\mfR^{n+1}_+).
	\end{align*}
	under volume constraint, see e.g., \cite[Definition 4.2]{XZ21}, and the reason is that the first variation of $\vert\Om\vert$ along $X$ is indeed given by $\int_{\p^\ast\Om}X\cdot\nu_{\Om}\rd\mcH^n$,
	see \cite[(17.31)]{Mag12}.
	\item when $\theta\in[\pi/2,\pi)$, the naturally induced varifold ${\rm var}(\p^\ast\Om\cap\mfR^{n+1}_+)$ has a fixed contact angle $\theta$ with $\p\mfR^{n+1}_+$ at $\p^\ast\Om\cap\p\mfR^{n+1}_+$ in the sense of \cite[Definition 3.1]{KT17} with generalized mean curvature $H=c$, since for any $X\in\mathfrak{X}_t(\overline{\mfR^{n+1}_+})$,
	\eqref{formu-LZZ21-1.7} reads as
	\begin{align}\label{eq-first-variation}
		\int_{\p^\ast\Om\llcorner\mfR^{n+1}_+}{\rm div}_{\p\Om}X\rd\mcH^n+\cos(\pi-\theta)\int_{\p^\ast\Om\llcorner\p\mfR^{n+1}_+}{\rm div}_{\p\mfR^{n+1}_+}X\rd\mcH^n
		=\int_{\p^\ast\Om\llcorner\mfR^{n+1}_+}X\cdot (c\nu_{\Om})\rd\mcH^n.
	\end{align}
	
	Moreover, the naturally induced pair of varifolds $\left({\rm var}(\p^\ast\Om\cap\mfR^{n+1}_+),{\rm var}(\p^\ast\Om\cap \p\mfR^{n+1}_+)\right)$ satisfies the contact angle condition $\theta$ in the sense of \cite[Definition 3.1]{DeMDeP21}.

    For the case $\theta\in(0,\pi/2)$, we have the following observation.
\begin{remark}\label{Rem-replacement}
	\normalfont
	When the capillary angle $\theta\in(0,\pi/2)$, since we consider only the sets of finite perimeter that are bounded, we may assume that there is a large enough smooth open set $\mathbf{B}_+$ such that $\Om\subset \mathbf{B}_+\subset\overline{\mfR^{n+1}_+}$.
	Once the set $\mathbf{B}_+$ is fixed, we know that 
    $P(\mathbf{B}_+;\p\mfR^{n+1}_+)$ is a fixed number and it is clear that 
    $$P(\Om;\p\mfR^{n+1}_+)=P(\mathbf{B}_+;\p\mfR^{n+1}_+)-P(\mathbf{B}_+\setminus\Om;\p\mfR^{n+1}_+),$$
	which implies the following fact:
	if $\Om$ is $\mcA$-stationary (as in \cref{Defn-A-functional}), then
	it is stationary for the modified functional
    \begin{align}\label{defn-tilde-A}
        \tilde\mcA(\Om):=P(\Om;\mfR^{n+1}_+)+\cos\theta P(\mathbf{B}_+\setminus\Om;\p\mfR^{n+1}_+)-c\vert\Om\vert,
    \end{align}
	in the sense that for any $C^1$-diffeomorphism $f:\overline{\mfR^{n+1}_+}\ra\overline{\mfR^{n+1}_+}$ with ${\rm spt}f\subset\subset\overline{\mathbf{B}_+}$,
    such that $f:\p\mfR^{n+1}_+\ra\p\mfR^{n+1}_+$ is a diffeomorphism of $\p\mfR^{n+1}_+$,
	there holds
	\begin{align*}
		\frac{\rd}{\rd t}\mid_{t=0}\tilde\mcA(\psi_t(\Om))=0,
	\end{align*}
	where $\{\psi_t\}_{\vert t\vert<T}$ is a one parameter family of diffeomorphisms induced by $f$.
	
\end{remark}
\end{enumerate}

An important feature we shall use for the $\mcA$-stationary sets is that they satisfy the following Euclidean volume growth condition:

\begin{proposition}\label{Prop-Evg}
	Given $\theta\in(0,\pi)$ and $c>0$.
	Let $\Om\subset\overline{\mfR^{n+1}_+}$ be a non-empty, bounded, relatively open set with finite perimeter in $\overline{\mfR^{n+1}_+}$ that is stationary for the $\mcA$-functional, then $\pr\Om$ has Euclidean volume growth, that is, there exists some universal constants (depends only on $\Om$ and $n$) $R_1>0$ and $C_1>0$ such that for any $x\in \pr\Om$ and for any $0<r<R_1$, there holds
	\begin{align}\label{ineq-vg-M}
	\mcH^n(\pr\Om\cap B_r(x))\leq C_1r^n.
		\end{align}
\end{proposition}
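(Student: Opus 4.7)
The plan is to deduce the Euclidean density bound from an Allard-type monotonicity formula applied to the capillary varifold attached to $\Om$. Starting from the first variation identity \eqref{formu-LZZ21-1.7}, namely
\begin{align*}
    \int_{\p^\ast\Om\cap\mfR^{n+1}_+}{\rm div}_{\pr\Om}X\,\rd\mcH^n
    +\cos(\pi-\theta)\int_{\p^\ast\Om\cap\p\mfR^{n+1}_+}{\rm div}_{\pr\Om}X\,\rd\mcH^n
    =c\int_{\p^\ast\Om}X\cdot\nu_\Om\,\rd\mcH^n
\end{align*}
for every admissible $X\in\mathfrak{X}(\overline{\mfR^{n+1}_+})$, the strategy runs in two steps: first derive a uniform mean-curvature bound and apply classical Allard monotonicity at interior points, then handle boundary points via a suitably chosen tangential test field.

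\emph{Interior points.} For $x_0\in\pr\Om\cap\mfR^{n+1}_+$ with $r_0:=d(x_0,\p\mfR^{n+1}_+)>0$, restricting to $X\in\mathfrak{X}_c(\mfR^{n+1}_+)$ kills the $T$-integral above, so the varifold $V:={\rm var}(\p^\ast\Om\cap\mfR^{n+1}_+)$ has generalized mean-curvature vector $c\,\nu_\Om$ on $B_{r_0}(x_0)$ with $|\vec H|=c$. Allard's monotonicity formula (see e.g.\ \cite{Allard72,Sim83}) asserts that $r\mapsto e^{cr}\vvert V\vvert(B_r(x_0))/(\om_n r^n)$ is non-decreasing on $(0,r_0)$; comparing at radius $r$ and at a fixed reference radius $R_1\leq 1$, then using the crude global bound $\vvert V\vvert(B_{R_1}(x_0))\leq P(\Om;\mfR^{n+1}_+)<\infty$, yields $\vvert V\vvert(B_r(x_0))\leq C_1 r^n$ with $C_1=C_1(n,c,P(\Om))$. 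The assumption $\mcH^{n-1}({\rm sing}\pr\Om)=0$ gives $\mcH^n(\pr\Om\cap B_r)=\vvert V\vvert(B_r)$, so \eqref{ineq-vg-M} follows in the interior.

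\emph{Boundary points.} For $x_0\in\Gamma$, I would use the tangential test field $X(x)=\phi\bigl(|x-x_0|/r\bigr)(x-x_0)$, which is admissible since $(x-x_0)\cdot E_{n+1}=0$ whenever $x\in\p\mfR^{n+1}_+$. Substituting into \eqref{formu-LZZ21-1.7} and differentiating the scaled density in $r$ by the usual computation produces a monotonicity-type inequality of the schematic form
\begin{align*}
    \frac{\rd}{\rd r}\left[\frac{\vvert V\vvert(B_r(x_0))}{r^n}\right]
    \geq -\frac{c}{r^n}\vvert V\vvert(B_r(x_0))-\frac{|\cos\theta|}{r^n}\mcH^n(T\cap B_r(x_0))+\text{nonnegative terms},
\end{align*}
where the second error term records the capillary contribution. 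Since $\mcH^n(T\cap B_r(x_0))\leq P(\Om;\p\mfR^{n+1}_+)<\infty$, this error is integrable in $r$, and integrating over $(r,R_1)$ with $R_1$ chosen small enough delivers the desired bound. In the special case $\theta=\pi/2$ the capillary term vanishes and one may instead invoke the reflection principle \cref{Lem-ReflectionPrinciple}: the doubled varifold $\bar V$ is stationary in $\mfR^{n+1}$ with bounded generalized mean curvature, and the interior monotonicity applies verbatim at every $x_0\in\Gamma$ viewed as an interior point of $\bar V$.

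The principal obstacle is the boundary case when $\theta\neq\pi/2$: direct reflection is unavailable and one must manage the $\cos\theta$ cross-term carefully in the monotonicity identity. This is feasible here precisely because the proposition only requires constants depending on $\Om$ (hence on $c$, $P(\Om;\mfR^{n+1}_+)$, and $P(\Om;\p\mfR^{n+1}_+)$) rather than universal ones, so the bounded boundary quantity can be absorbed into $C_1$; the main care goes into choosing $R_1$ small enough that the monotonicity inequality is genuinely useful in a neighbourhood of $\Gamma$ and into ensuring the positive terms are not accidentally discarded with the wrong sign.
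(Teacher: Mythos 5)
Your overall strategy -- a monotonicity formula for the capillary varifold -- is the same as the paper's, but the paper does not re-derive the boundary monotonicity: for $\theta\in[\pi/2,\pi)$ it observes that $\bigl({\rm var}(\p^\ast\Om\cap\mfR^{n+1}_+),{\rm var}(T)\bigr)$ has fixed contact angle in the sense of Kagaya--Tonegawa and quotes their monotonicity formula \cite[Theorem 3.2, (3.9)]{KT17} directly, which is valid at \emph{every} $x\in\pr\Om$ (interior or boundary) up to scale ${\rm diam}(\Om)$; for $\theta\in(0,\pi/2)$ it passes to the complement $\mathbf{B}_+\setminus\Om$ as in \cref{Rem-replacement}, which has contact angle $\pi-\theta\in(\pi/2,\pi)$, and repeats the argument. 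Your proposal has three concrete gaps relative to this.

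First, your interior step does not give a uniform constant. Classical Allard monotonicity at $x_0\in\pr\Om\cap\mfR^{n+1}_+$ only holds on $(0,r_0)$ with $r_0=d(x_0,\p\mfR^{n+1}_+)$, so you cannot compare against a ``fixed reference radius $R_1\le 1$'' independent of $x_0$; the resulting constant is $\vvert V\vvert(B_{r_0}(x_0))/r_0^n$, which is not a priori bounded as $x_0\to\Gamma$. Your boundary computation is set up only for $x_0\in\Gamma$, and you omit the covering step (e.g.\ $B_r(x_0)\subset B_{2r}(x_0')$ with $x_0'\in\Gamma$ nearest to $x_0$) needed to transfer the boundary estimate to interior points with $d(x_0,\Gamma)<r$. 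Second, your justification that the capillary error term is integrable is wrong as stated: $\mcH^n(T\cap B_r(x_0))\le P(\Om;\p\mfR^{n+1}_+)<\infty$ does \emph{not} make $r^{-n}\mcH^n(T\cap B_r(x_0))$ integrable near $r=0$; what saves you is the trivial planar bound $\mcH^n(T\cap B_r(x_0))\le\om_n r^n$ (since $T$ lies in the hyperplane $\p\mfR^{n+1}_+$), which makes the term bounded. Third, the sign of the $\cos\theta$ cross-term is not merely a matter of ``absorbing a bounded quantity'': for $\theta<\pi/2$ the wetted-region contribution enters the monotonicity identity with the unfavourable sign, which is precisely why the paper switches to the complement in that regime rather than estimating the term directly. (A minor point: you also invoke $\mcH^{n-1}({\rm sing}\pr\Om)=0$, which is not among the hypotheses of this proposition.) The cleanest repair is to replace your schematic derivation by the citation to \cite[Theorem 3.2]{KT17} together with the replacement trick for $\theta\in(0,\pi/2)$, as the paper does.
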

\begin{proof}
{\bf Case 1. }$\theta\in[\pi/2,\pi)$.

	Our starting point is that as illustrated below \eqref{formu-LZZ21-1.7},
	${\rm var}(\p^\ast\Om\llcorner\mfR^{n+1}_+)$ has a fixed contact angle $\theta$ with $\p\mfR^{n+1}_+$ at $\p^\ast\Om\llcorner\p\mfR^{n+1}_+$ in the sense of \cite[Definition 3.1]{KT17} with bounded generalized mean curvature,
	therefore the monotonicity formula \cite[Theorem 3.2]{KT17} is applicable here. Moreover, since $\p\mfR^{n+1}_+$ is planar, we know that the maximal distance ($s_0$) used to define the tubular neighborhood ($N_{s_0}$) in \cite{KT17} can be taken as large as possible in our case.
	By taking $s_0=6\cdot{\rm diam}(\Om):=6\cdot\max_{x,y\in\overline\Om}\vert x-y\vert$ and fix $p=n+1$ in \cite[(3.9)]{KT17}, we find: for any $x\in\pr\Om$ and for any $0<r<{\rm diam}(\Om)$,
	\begin{align*}
		\frac{\mcH^{n}(\pr\Om\cap B_r(x))}{r^{n}}
		\leq\om_{n}\left\{\left(\frac{2\mcH^{n}(\p^\ast\Om)}{\om_{n}({\rm diam}(\Om))^{n}}\right)^{\frac{1}{n+1}}+\gamma\cdot({\rm diam}(\Om))^{\frac{1}{n+1}}\right\}^{n+1}:=C,
	\end{align*}
	where $\gamma:=
	\left(\frac{1}{\om_{n}}\int_{\p^\ast\Om\cap\mfR^{n+1}_+}2\vert H(x)\vert^{n+1}\rd\mcH^n(x)\right)^{\frac{1}{n+1}}$ is bounded since $\Om$ is of finite perimeter and $\vert H\vert=c$ (which follows from the $\mcA$-stationarity of $\Om$),
	and hence
	\begin{align*}
		\mcH^{n}(\pr\Om\cap B_r(x))\leq Cr^{n}
	\end{align*}
	for every $x\in\pr\Om$, $r<{\rm diam}(\Om)$.
	This shows that $\pr\Om$ has Euclidean volume growth.

 {\bf Case 2. }$\theta\in(0,\pi/2)$.

Recall \cref{Rem-replacement},
${\rm var}(\p^\ast\Om\cap\mfR^{n+1}_+)$ has a fixed contact angle $\tilde\theta:=\pi-\theta\in(\pi/2,\pi)$ with $\p\mfR^{n+1}_+$ at $\p^\ast(\mathbf{B}_+\setminus\Omega)\cap\p\mfR^{n+1}_+$ in the sense of \cite[Definition 3.1]{KT17} with generalized mean curvature $H=c$,
a simple modification of {\bf Case 1} then shows that:
when $\theta\in(0,\pi/2)$, $\pr\Om$ satisfies the Euclidean volume growth condition as well.
 This completes the proof.
\end{proof}
\begin{remark}\label{Rem-integrals}
	\normalfont
	Our main theorem is stated under the regularity assumption that $\mcH^{n-1}({\rm sing}\pr\Om)=0$.
	In light of the Hausdorff dimension of the singular set, in all follows, we do not make a distinction between the integrals $\int_{\pr\Om}\cdot \rd\mcH^n$, $\int_{\p^\ast\Om\cap\mfR^{n+1}_+}\cdot\rd\mcH^n$ and $\int_{{\rm reg}\pr\Om}\cdot \rd\mcH^n$;
	$\int_\Gamma\cdot \rd\mcH^{n-1}$ and $\int_{{\rm reg}\Gamma}\cdot\rd\mcH^{n-1}$.
\end{remark}
Under the regularity assumption, we have the following useful tangential divergence theorem for the $\mcA$-stationary set, whose proof is postponed to \cref{App-1}.
\begin{lemma}\label{Lem-divergenctheorem}
	Given $\theta\in(0,\pi)$ and $c>0$, let $\Om$ be a non-empty, bounded, relatively open set with finite perimeter in $\overline{\mfR^{n+1}_+}$ such that $\mcH^{n-1}({\rm sing}\pr\Om)=0$.
	If $\Om$ is stationary for the $\mcA$-functional,
	then for any $X\in \mathfrak{X}(\overline{\mfR^{n+1}_+})$, there holds
	\begin{align}\label{formu-div-M}
		\int_{\pr\Om}{\rm div}_{\p\Om}X\rd\mcH^{n}=
		\int_\Gamma X\cdot\mu\rd\mcH^{n-1}+\int_{\pr\Om} X\cdot (c\nu_\Om)\rd\mcH^{n},
	\end{align}
	and for any $X\in\mathfrak{X}_t(\overline{\mfR^{n+1}_+})$, there holds
	\begin{align}\label{formu-div-B+}
		\int_{T}{\rm div}_{\p\mfR^{n+1}_+}X\rd\mcH^{n}
		=\int_\Gamma X\cdot\bar\nu\rd\mcH^{n-1}.
	\end{align}
\end{lemma}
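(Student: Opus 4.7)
The plan is to reduce both identities to the classical tangential divergence theorem on the $C^2$-piece $({\rm reg}\pr\Om,{\rm reg}\Gamma)$ and then absorb the singular set via a cut-off argument driven by the Euclidean volume growth from \cref{Prop-Evg}. First I would pin down the classical mean curvature of ${\rm reg}\pr\Om$: feeding a test field $X\in\mathfrak{X}_c(\overline{\mfR^{n+1}_+})$ (compactly supported in the open half-space) into \eqref{formu-LZZ21-1.7} kills the integral over $\p^\ast\Om\cap\p\mfR^{n+1}_+$, and $\mcA$-stationarity collapses to
\begin{align*}
    \int_{\p^\ast\Om\cap\mfR^{n+1}_+}{\rm div}_{\p\Om}X\rd\mcH^n=c\int_{\p^\ast\Om\cap\mfR^{n+1}_+}X\cdot\nu_\Om\rd\mcH^n.
\end{align*}
Matched against the classical tangential divergence identity on the smooth hypersurface $M={\rm reg}\pr\Om\cap\mfR^{n+1}_+$, the fundamental lemma of the calculus of variations forces the classical mean curvature of ${\rm reg}\pr\Om$ (with respect to $\nu_\Om$) to equal the constant $c$.

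Having pinned down the mean curvature, for any $X\in\mathfrak{X}(\overline{\mfR^{n+1}_+})$ with support disjoint from ${\rm sing}\pr\Om$ the classical tangential divergence theorem on the $C^2$-manifold-with-boundary $({\rm reg}\pr\Om,{\rm reg}\Gamma)$ yields
\begin{align*}
    \int_{{\rm reg}\pr\Om}{\rm div}_{\p\Om}X\rd\mcH^n=\int_{{\rm reg}\pr\Om}cX\cdot\nu_\Om\rd\mcH^n+\int_{{\rm reg}\Gamma}X\cdot\mu\rd\mcH^{n-1}.
\end{align*}
To delete the support restriction I would invoke $\mcH^{n-1}({\rm sing}\pr\Om)=0$ to cover ${\rm sing}\pr\Om$ by countably many balls $\{B_{r_i}(x_i)\}$ with $\sum_ir_i^{n-1}<\varep$, and take a Lipschitz cut-off $\chi_\varep\in[0,1]$ equal to $1$ on $\bigcup_i B_{r_i}(x_i)$ and $0$ outside $\bigcup_i B_{2r_i}(x_i)$ with $\vert\nabla\chi_\varep\vert\leq C/r_i$. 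Applying the above identity with $\varphi_\varep X$ in place of $X$, where $\varphi_\varep:=1-\chi_\varep$, and expanding ${\rm div}_{\p\Om}(\varphi_\varep X)=\varphi_\varep{\rm div}_{\p\Om}X+X\cdot\nabla^{\p\Om}\varphi_\varep$ produces an error bounded by
\begin{align*}
    \Vert X\Vert_\infty\sum_i\frac{C}{r_i}\mcH^n(\pr\Om\cap B_{2r_i}(x_i))\leq C\Vert X\Vert_\infty\sum_ir_i^{n-1}<C'\varep,
\end{align*}
where the Euclidean volume growth $\mcH^n(\pr\Om\cap B_r)\leq Cr^n$ from \cref{Prop-Evg} enters decisively; dominated convergence takes care of the remaining terms, and sending $\varep\ra0$ (together with the $\mcH^n$-negligibility of ${\rm sing}\pr\Om$ and the $\mcH^{n-1}$-negligibility of ${\rm sing}\Gamma$ recorded in \cref{Rem-integrals}) gives \eqref{formu-div-M}.

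The identity \eqref{formu-div-B+} is considerably easier: $T$ is a bounded set of finite perimeter inside the flat hyperplane $\p\mfR^{n+1}_+\cong\mfR^n$ whose reduced boundary in that hyperplane coincides (up to an $\mcH^{n-1}$-null set) with $\Gamma$, which is smooth by hypothesis; since any $X\in\mathfrak{X}_t(\overline{\mfR^{n+1}_+})$ is tangent to the hyperplane, ${\rm div}_{\p\mfR^{n+1}_+}X$ reduces to the ordinary Euclidean divergence on $\mfR^n$ and the classical Gauss-Green formula for sets of finite perimeter closes the case at once. I expect the main obstacle to be the cut-off step for \eqref{formu-div-M}: forcing $\int X\cdot\nabla^{\p\Om}\varphi_\varep$ to vanish as $\varep\ra0$ requires a delicate matching between an $(n-1)$-dimensional thinness estimate for ${\rm sing}\pr\Om$ and an $n$-dimensional volume growth estimate for $\pr\Om$ near the singular set, which is precisely why both the hypothesis $\mcH^{n-1}({\rm sing}\pr\Om)=0$ and \cref{Prop-Evg} turn out to be indispensable.
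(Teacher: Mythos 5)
Your proof is correct and follows essentially the same route as the paper: pin down the classical mean curvature $c$ on ${\rm reg}\pr\Om$ from the first-variation identity, apply the classical tangential divergence theorem away from ${\rm sing}\pr\Om$, and absorb the singular set with a cut-off whose gradient error is controlled by pairing $\sum_i r_i^{n-1}<\varep$ against the Euclidean volume growth of \cref{Prop-Evg}. The only deviation is in \eqref{formu-div-B+}, where you invoke smoothness of $\Gamma$ (a hypothesis of \cref{Thm-Alexandrov}, not of this lemma) and an unproved claim that $T$ has finite perimeter in $\p\mfR^{n+1}_+$ with reduced boundary $\Gamma$; the paper instead simply reruns the same cut-off argument on $T$, using that the singularities of $T$ lie on $\Gamma\subset\pr\Om$ and are thus $\mcH^{n-1}$-negligible.
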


\subsection{Singular capillary CMC hypersurfaces in a half-space}\label{Sec-2-8}
Let us first collect the basic facts on the $\mcA$-stationary sets and then give a formal definition of the singular capillary CMC hypersurfaces in a half-space.
Given a fixed capillary contact angle $\theta\in(0,\pi)$ and a positive constant $c$.
Let $\Om$ be a non-empty, bounded, relatively open set of finite perimeter in $\overline{\mfR^{n+1}_+}$ that is stationary for the $\mcA$-functional.
We learn from \cref{Prop-Evg} that $\pr\Om$ satisfies the Euclidean volume growth condition \eqref{ineq-vg-M}.
On the other hand, provided that $\mcH^{n-1}({\rm sing}\pr\Om)=0$, we can use the tangential divergence theorems on $\pr\Om$ and $T$ as in \cref{Lem-divergenctheorem}.

Having these facts in mind and recall that the $\mcA$-stationary set $\Om$ is indeed stationary for the free energy functional $\mathcal{E}(\Om)$ under volume constraint (see below \eqref{formu-LZZ21-1.7}), we thus arrive at
\begin{proposition}[{\cite[Proposition 4.3]{XZ21}}]\label{Prop-Young}
	Given $\theta\in(0,\pi)$ and $c>0$, let $\Om$ be a non-empty, bounded, relatively open set with finite perimeter in $\overline{\mfR^{n+1}_+}$ such that $\mcH^{n-1}({\rm sing}\pr\Om)=0$.
	If $\Om$ is stationary for the $\mcA$-functional, then Young's law holds.
	Precisely,
	on ${\rm reg}\Gamma={\rm reg}\pr\Om\cap\p\mfR^{n+1}_+$,
	the measure-theoretic hypersurface $\pr\Om$ meets $\p\mfR^{n+1}_+$ with a constant contact angle $\theta$, i.e.,
	\begin{align}\label{eq-Young}
		\nu_\Om\cdot(-E_{n+1})
		=-\cos\theta
		=-\mu\cdot\bar\nu\quad\text{on }{\rm reg}\Gamma.
	\end{align}
\end{proposition}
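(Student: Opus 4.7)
The plan is to combine the first-variation formula \eqref{formu-LZZ21-1.7} for $\mcA$ with the tangential divergence theorems of \cref{Lem-divergenctheorem}, and then localize on ${\rm reg}\,\Gamma$.

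First I would restrict to test fields $X\in\mathfrak{X}_t(\overline{\mfR^{n+1}_+})$, which is legitimate because $\Om$ is $\mcA$-stationary against every field in $\mathfrak{X}(\overline{\mfR^{n+1}_+})$. Two simplifications occur in this tangent setting: on $T=\p^\ast\Om\cap\p\mfR^{n+1}_+$ the measure-theoretic outer normal of $\Om$ equals $-E_{n+1}$, so $X\cdot\nu_\Om=0$ on $T$; and since the approximate tangent plane to $T$ is $\p\mfR^{n+1}_+$ itself, the tangential divergence ${\rm div}_{\p\Om}X$ coincides with ${\rm div}_{\p\mfR^{n+1}_+}X$ on $T$. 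Using $\cos(\pi-\theta)=-\cos\theta$, the stationarity condition \eqref{formu-LZZ21-1.7} then becomes
\begin{align*}
0=\int_{\pr\Om}{\rm div}_{\p\Om}X\,\rd\mcH^n-\cos\theta\int_T{\rm div}_{\p\mfR^{n+1}_+}X\,\rd\mcH^n-c\int_{\pr\Om}X\cdot\nu_\Om\,\rd\mcH^n.
\end{align*}

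Next I would apply the divergence identities \eqref{formu-div-M} and \eqref{formu-div-B+} of \cref{Lem-divergenctheorem}, whose hypotheses are met thanks to $\mcH^{n-1}({\rm sing}\,\pr\Om)=0$. The first identity converts $\int_{\pr\Om}{\rm div}_{\p\Om}X\,\rd\mcH^n$ into $\int_\Gamma X\cdot\mu\,\rd\mcH^{n-1}+c\int_{\pr\Om}X\cdot\nu_\Om\,\rd\mcH^n$, while the second converts $\int_T{\rm div}_{\p\mfR^{n+1}_+}X\,\rd\mcH^n$ into $\int_\Gamma X\cdot\bar\nu\,\rd\mcH^{n-1}$. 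The two bulk $c$-terms cancel and we are left with the boundary identity
\begin{align*}
0=\int_\Gamma X\cdot(\mu-\cos\theta\,\bar\nu)\,\rd\mcH^{n-1}\quad\forall\,X\in\mathfrak{X}_t(\overline{\mfR^{n+1}_+}).
\end{align*}

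To localize, I would note that ${\rm reg}\,\Gamma$ is a smooth $(n-1)$-submanifold of $\p\mfR^{n+1}_+$, so $\bar\nu$ extends smoothly to a tangent field on $\p\mfR^{n+1}_+$ in a neighborhood of any chosen $x_0\in{\rm reg}\,\Gamma$. Testing with $X=\varphi\bar\nu$ for a nonnegative cutoff $\varphi$ concentrating at $x_0$, and then varying $\varphi$, produces
\begin{align*}
\mu\cdot\bar\nu=\cos\theta\quad\text{on }{\rm reg}\,\Gamma.
\end{align*}
To deduce the companion identity $\nu_\Om\cdot(-E_{n+1})=-\cos\theta$, I would use the elementary geometric fact that at each $x\in{\rm reg}\,\Gamma$ the pairs $(\nu_\Om,\mu)$ and $(-E_{n+1},\bar\nu)$ are orthonormal bases of the common two-plane $(T_x\Gamma)^\perp\subset\mfR^{n+1}$, with opposite orientation forced by the positioning of $\Om$ on a single side of $\pr\Om$ and above $T$. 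This relative orientation yields $\nu_\Om\cdot(-E_{n+1})=-\mu\cdot\bar\nu$, which completes the proof. The only real obstacle is ensuring the divergence theorems are applicable; this is precisely where the regularity assumption $\mcH^{n-1}({\rm sing}\,\pr\Om)=0$ enters, and everything else reduces to a short manipulation of \eqref{formu-LZZ21-1.7} and a two-dimensional orientation check at ${\rm reg}\,\Gamma$.
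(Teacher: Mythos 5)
Your proposal is correct and follows exactly the route the paper indicates (the paper itself defers the proof to \cite[Proposition 4.3]{XZ21}, but the surrounding text in Sections 2.5--2.6 makes clear that Young's law is obtained precisely by combining the first variation \eqref{formu-LZZ21-1.7} restricted to $\mathfrak{X}_t(\overline{\mfR^{n+1}_+})$ with the two divergence identities of \cref{Lem-divergenctheorem} and then localizing on ${\rm reg}\,\Gamma$). The cancellation of the bulk terms, the resulting identity $\int_\Gamma X\cdot(\mu-\cos\theta\,\bar\nu)\,\rd\mcH^{n-1}=0$, and the orientation check giving $\nu_\Om\cdot(-E_{n+1})=-\mu\cdot\bar\nu$ are all sound.
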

This shows that the fixed capillary angle $\theta$ used to define the $\mcA$-functional in \cref{Defn-A-functional} is indeed the contact angle of ${\rm reg}\pr\Om$ with $\p\mfR^{n+1}_+$, and hence the following definition makes sense.
\begin{definition}\label{Defn-CapillaryCMC}
	\normalfont
	Given $\theta\in(0,\pi)$ and $c>0$, let $\Om$ be a non-empty, bounded, relatively open set with finite perimeter in $\overline{\mfR^{n+1}_+}$ such that $\mcH^{n-1}({\rm sing}\pr\Om)=0$.
	If $\Om$ is $\mcA$-stationary,
	then we say that $\pr\Om$ is a \textit{singular capillary CMC hypersurface in $\overline{\mfR^{n+1}_+}$}.
\end{definition}
It is clear that \cref{Defn-CapillaryCMC} holds true for the $\Om$ we consider in \cref{Thm-Alexandrov}, and
an important fact on the singular capillary CMC hypersurface we shall use is the following Minkowski-type formula, see \cite{AS16} in the smooth setting.
\begin{proposition}[Minkowski-type formula in the half-space]\label{Prop-Minko}
	Given $\theta\in(0,\pi)$ and $c>0$, let $\pr\Om$ be a singular capillary CMC hypersurface in $\overline{\mfR^{n+1}_+}$.
	There holds
	\begin{align}\label{formu-Minko}
		\int_{\pr\Om}n(1-\cos\theta\nu_\Om\cdot E_{n+1})-x\cdot(c\nu_\Om)\rd\mcH^n=0.
	\end{align}
\end{proposition}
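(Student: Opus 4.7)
The plan is to combine the tangential divergence theorems on $\pr\Om$ and $T$ provided by \cref{Lem-divergenctheorem} with the Euclidean Gauss-Green identity \eqref{formu-GaussGreen}, testing against suitably truncated position-type vector fields. Fix a cutoff $\chi\in C^\infty_c(\mfR^{n+1})$ equal to $1$ on a neighborhood of $\overline{\Om}$, and set $X(x):=\chi(x)\,x$. At every boundary point $p\in\p\mfR^{n+1}_+$ one has $X(p)\cdot E_{n+1}=0$, so $X\in\mathfrak{X}(\overline{\mfR^{n+1}_+})$; on $\overline{\Om}$ the field agrees with the position vector, whence ${\rm div}_{\p\Om}X=n$ on $\pr\Om$. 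Plugging $X$ into \eqref{formu-div-M} yields
\begin{align*}
n\,\mcH^n(\pr\Om)=\int_{\Gamma}x\cdot\mu\rd\mcH^{n-1}+c\int_{\pr\Om}x\cdot\nu_\Om\rd\mcH^n.
\end{align*}

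Next I would reduce the $\Gamma$-integral to a horizontal one using Young's law. At $\mcH^{n-1}$-a.e.\ point of ${\rm reg}\Gamma$, the conormals $\mu$ and $\bar\nu$ both lie in the $2$-plane orthogonal to $T_x\Gamma$, which is spanned by the orthonormal pair $\{\bar\nu,E_{n+1}\}$; \cref{Prop-Young} gives $\mu\cdot\bar\nu=\cos\theta$, so $\mu=\cos\theta\,\bar\nu+(\mu\cdot E_{n+1})E_{n+1}$. Because $x\cdot E_{n+1}=0$ along $\Gamma\subset\p\mfR^{n+1}_+$, this collapses to $x\cdot\mu=\cos\theta\,(x\cdot\bar\nu)$. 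Testing \eqref{formu-div-B+} with the horizontally-projected cutoff field $Y(x):=\chi(x)\bigl(x-(x\cdot E_{n+1})E_{n+1}\bigr)$, which clearly belongs to $\mathfrak{X}_t(\overline{\mfR^{n+1}_+})$ and coincides with the position vector on a neighborhood of $T\cup\Gamma$ inside $\p\mfR^{n+1}_+$ with ${\rm div}_{\p\mfR^{n+1}_+}Y=n$ there, produces
\begin{align*}
n\,\mcH^n(T)=\int_{\Gamma}x\cdot\bar\nu\rd\mcH^{n-1}.
\end{align*}

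Finally, I would plug the Euclidean divergence-free field $E_{n+1}$ (times a cutoff) into \eqref{formu-GaussGreen} to read off
\begin{align*}
\mcH^n(T)=\int_{\pr\Om}\nu_\Om\cdot E_{n+1}\rd\mcH^n.
\end{align*}
Chaining the three identities gives
\begin{align*}
n\,\mcH^n(\pr\Om)-n\cos\theta\int_{\pr\Om}\nu_\Om\cdot E_{n+1}\rd\mcH^n=c\int_{\pr\Om}x\cdot\nu_\Om\rd\mcH^n,
\end{align*}
which is exactly \eqref{formu-Minko}.

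I do not anticipate a serious obstacle: the hard analytic work has already been absorbed into \cref{Lem-divergenctheorem}, whose proof rests on the regularity hypothesis $\mcH^{n-1}({\rm sing}\pr\Om)=0$, and into Young's law \cref{Prop-Young}. The only points needing care are the verifications that the truncated fields $X$ and $Y$ satisfy the tangency conditions defining $\mathfrak{X}(\overline{\mfR^{n+1}_+})$ and $\mathfrak{X}_t(\overline{\mfR^{n+1}_+})$, and that the decomposition $\mu=\cos\theta\,\bar\nu+(\mu\cdot E_{n+1})E_{n+1}$ holds $\mcH^{n-1}$-a.e.\ on $\Gamma$; both are immediate from the orthogonal splitting of the normal $2$-plane to $\Gamma$ together with $x\cdot E_{n+1}=0$ on $\p\mfR^{n+1}_+$.
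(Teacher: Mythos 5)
Your proof is correct. The only real difference from the paper's argument is the order in which the available identities are assembled: the paper computes $|T|=\int_{\pr\Om}\nu_\Om\cdot E_{n+1}\,\rd\mcH^n$ exactly as you do, but then plugs the position field directly into the first variation formula \eqref{formu-LZZ21-1.7}, whose $\mcA$-stationarity immediately yields $n\,\mcH^n(\pr\Om)-n\cos\theta\,|T|-c\int_{\pr\Om}x\cdot\nu_\Om\,\rd\mcH^n=0$ with no boundary integral over $\Gamma$ ever appearing. You instead route through \cref{Lem-divergenctheorem}, which produces the term $\int_\Gamma x\cdot\mu\,\rd\mcH^{n-1}$, and then eliminate it via Young's law and a second application of the divergence theorem on $T$. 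These two routes are genuinely equivalent: the computation \eqref{eq-divergence-1}--\eqref{eq-divergence-3} in \cref{Sec-2-9} is precisely the statement that \eqref{formu-div-M}, \eqref{formu-div-B+} and the decomposition $\mu=\cos\theta\,\bar\nu-\sin\theta E_{n+1}$ recombine into the stationarity identity, so you have in effect re-derived \eqref{eq-divergence-3} for the specific field $X(x)=x$. What your version costs is an extra invocation of \cref{Prop-Young} (which the paper's two-line proof does not explicitly need, since the cancellation of the contact-angle terms is already encoded in the definition of $\mcA$-stationarity); what it buys is a proof that works verbatim for any configuration satisfying the conclusions of \cref{Lem-divergenctheorem} and Young's law, without reference to the variational origin of $\pr\Om$. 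All the verifications you flag — tangency of the truncated fields and the orthogonal splitting of the normal $2$-plane to $\Gamma$ — go through as you say.
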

\begin{proof}
	Integrating ${\rm div}(E_{n+1})$ on $\Om$, the generalized Gauss-Green's formula \eqref{formu-GaussGreen} and \cref{Rem-integrals} yields
	\begin{align*}
		0=\int_\Om{\rm div}(E_{n+1})\rd\mcL^{n+1}
		=\int_{\pr\Om}\nu\cdot E_{n+1}\rd\mcH^n-\vert T\vert.
	\end{align*}
	
	On the other hand, since $\Om$ is $\mcA$-stationary, testing \eqref{formu-LZZ21-1.7} with a vector field $X\in\mathfrak{X}_t(\overline{\mfR^{n+1}_+})$ which is the position vector field $X(x)=x$ in a neighborhood of $\Om$, we obtain
	\begin{align*}
		\int_{\pr\Om}n\rd\mcH^n-n\cos\theta\vert T\vert-\int_{\pr\Om}x\cdot(c\nu_\Om)\rd\mcH^n=0.
	\end{align*}
	\eqref{formu-Minko} follows by combining these equalities.
\end{proof}
The Minkowski-type formula results in the following characterization of the given constant $c>0$.
\begin{corollary}
	Given $\theta\in(0,\pi)$ and $c>0$, let $\pr\Om$ be a singular capillary CMC hypersurface in $\overline{\mfR^{n+1}_+}$.
	The constant mean curvature $c$ satisfies
	\begin{align}\label{defn-H0}
		c=H^0_{\Om;\theta}:=\frac{n\int_{\pr\Om}\left(1-\cos\theta\nu_{\Om}\cdot E_{n+1}\right)d\mcH^n}{(n+1)\vert\Om\vert}>0.
	\end{align}
\end{corollary}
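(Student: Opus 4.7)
The plan is to derive the identity for $c$ as a direct corollary of the Minkowski-type formula \eqref{formu-Minko} by independently computing the quantity $\int_{\pr\Om} x\cdot\nu_\Om \, d\mcH^n$ via the generalized Gauss-Green formula \eqref{formu-GaussGreen} applied to the position vector field.

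First, I would pick a compactly supported $C^1$-vector field $X$ on $\mfR^{n+1}$ that coincides with the position vector field $y\mapsto y$ on an open neighborhood of $\overline\Om$ (this is legitimate since $\overline\Om$ is bounded; the values of $X$ outside this neighborhood are irrelevant because the only integrations involved are over $\Om$, $\pr\Om$, and $T$, all of which lie inside $\overline\Om$). On $T \subset \p\mfR^{n+1}_+$ we have $x\cdot E_{n+1}=0$, so the boundary term on $T$ in \eqref{formu-GaussGreen} vanishes. Since $\mathrm{div}\,X = n+1$ on $\Om$, the formula reduces to
$$(n+1)\,|\Om| \;=\; \int_{\pr\Om} x\cdot\nu_\Om\, d\mcH^n.$$

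Next, I would insert this identity into the Minkowski-type formula \eqref{formu-Minko}, namely $\int_{\pr\Om} n(1-\cos\theta\,\nu_\Om\cdot E_{n+1})\, d\mcH^n = \int_{\pr\Om} x\cdot(c\nu_\Om)\, d\mcH^n$, obtaining
$$\int_{\pr\Om} n\bigl(1-\cos\theta\,\nu_\Om\cdot E_{n+1}\bigr)\, d\mcH^n \;=\; c\,(n+1)\,|\Om|.$$
Dividing by $(n+1)|\Om|$ (which is strictly positive since $\Om$ is a non-empty, bounded, relatively open set, hence $|\Om|>0$) yields precisely the asserted formula $c = H^0_{\Om;\theta}$.

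The strict positivity of $H^0_{\Om;\theta}$ is then immediate: because $\theta\in(0,\pi)$ we have $|\cos\theta|<1$, and since $|\nu_\Om|=1$ the integrand satisfies $1-\cos\theta\,\nu_\Om\cdot E_{n+1} \geq 1-|\cos\theta| > 0$ pointwise $\mcH^n$-a.e.\ on $\pr\Om$, while $\mcH^n(\pr\Om) > 0$ as $\Om$ is non-empty and bounded. There is no serious obstacle in this argument; the only point requiring (standard) care is the compactly supported truncation of the position field so that \eqref{formu-GaussGreen} applies verbatim, which is harmless under the hypothesis $\mcH^{n-1}(\mathrm{sing}\,\pr\Om)=0$ built into the definition of a singular capillary CMC hypersurface.
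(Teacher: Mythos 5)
Your proposal is correct and follows essentially the same route as the paper: apply the generalized Gauss--Green formula \eqref{formu-GaussGreen} to the position vector field (the $T$-term vanishing since $x\cdot E_{n+1}=0$ there) to get $(n+1)|\Om|=\int_{\pr\Om}x\cdot\nu_\Om\,\rd\mcH^n$, then combine with the Minkowski-type formula \eqref{formu-Minko} and rearrange. Your added remarks on truncating the position field and on the strict positivity of the integrand are correct details the paper leaves implicit.
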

\begin{proof}
	Let us consider the position vector field $X(x)=x$, integrating ${\rm div}X$ on the set of finite perimeter $\Om$, using the generalized Gauss-Green formula \eqref{formu-GaussGreen} and \cref{Rem-integrals}, we get
	\begin{align*}
		(n+1)\vert\Om\vert
		=\int_\Om{\rm div}X(x)\rd\mcL^{n+1}(x)
		=&\int_{{\rm reg}\pr\Om}x\cdot\nu_{\Om}(x)\rd\mcH^n(x).
	\end{align*}
	Since $c$ is a constant, we can exploit the Minkowski-type formula \eqref{formu-Minko} to find
	\begin{align*}
		(n+1)c\vert\Om\vert
		=n\int_{\pr\Om}\left(1-\cos\theta\nu_{\Om}\cdot E_{n+1}\right)\rd\mcH^n.
	\end{align*}
	Rearrange the equality and we get \eqref{defn-H0}.
\end{proof}

\subsection{Triple of varifolds that has contact angle $\theta$ in the half-space}\label{Sec-2-9}
As mentioned in the introduction, to study the blow-up process along the boundary, we introduce the following contact angle condition of triple of varifolds.

Given $\theta\in(0,\pi/2)\cup(\pi/2,\pi)$ and $c>0$.
Let $\Om\subset \overline{\mfR^{n+1}_+}$ be a bounded, relatively open set of finite perimeter and finite volume, which is stationary for the $\mcA$-functional with $\mcH^{n-1}({\rm sing}\pr\Om)=0$.
By virtue of Young's law and \cref{Lem-divergenctheorem}, we carry out the classical computation as follows:
for any $X\in\mathfrak{X}(\overline{\mfR^{n+1}_+})$,
\begin{align}\label{eq-divergence-1}
	&\int_{\pr\Om}{\rm div}_{\p\Om}X(x)\rd\mcH^n(x)
	=\int_\Gamma X\cdot\mu \rd\mcH^{n-1}+\int_{\pr\Om}X\cdot (c\nu_\Om) \rd\mcH^n\notag\\
	=&\cos\theta\int_\Gamma X\cdot\bar\nu \rd\mcH^{n-1}-\sin\theta\int_\Gamma X\cdot E_{n+1}\rd\mcH^{n-1}+\int_{\pr\Om} X\cdot(c\nu_\Om)\rd\mcH^n,
\end{align}
here we have used the fact that $\mu=\cos\theta\bar\nu-\sin\theta E_{n+1}$ along ${\rm reg}\Gamma$.
Using \eqref{formu-div-B+} and notice that $T$ is planar, we get
\begin{align}\label{eq-divergence-2}
	\int_{T}{\rm div}_{\p\mfR^{n+1}_+}X\rd\mcH^n
	=\int_{T}{\rm div}_{\p\mfR^{n+1}_+}(X^T+X^\perp)\rd\mcH^n
	=\int_\Gamma X\cdot\bar\nu \rd\mcH^{n-1},
\end{align}
which yields
\begin{align}\label{eq-divergence-3}
	&\int_{\pr\Om}{\rm div}_{\p\Om}X\rd\mcH^n+\cos(\pi-\theta)\int_{T}{\rm div}_{\p\mfR^{n+1}_+}X\rd\mcH^n\notag\\
	=&\int_{\pr\Om}X\cdot(c\nu_\Om)\rd\mcH^n-\sin\theta\int_\Gamma X\cdot(E_{n+1})\rd\mcH^{n-1}.
\end{align}

Enlightened by \cite[Definition 3.1, Proposition 3.1]{DeMDeP21} and the above classical computation, we introduce the following contact angle condition for triple of rectifiable varifolds, which is stronger than \cite[Definition 3.1]{DeMDeP21} since it contains not only the tangential information but also the normal one.
\begin{definition}\label{Defn-contactangle-triple}
	\normalfont
	Given $\theta\in(0,\pi/2)\cup(\pi/2,\pi)$.
	Let $M\subset\overline{\mfR^{n+1}_+}, T\subset\p\mfR^{n+1}_+$ be normalized locally $\mcH^n$-rectifiable sets,
	let $\Gamma\subset\p\mfR^{n+1}_+$ be a normalized locally $\mcH^{n-1}$-rectifiable set, and let $\psi$ be a positive locally $\mcH^n$-integrable function on $M$.
	We say that the triple $({\rm var}(M,\psi),{\rm var}(T),{\rm var}(\Gamma))$ satisfies the contact angle condition $\theta$ if there exists a $\mcH^n\llcorner M$-measurable, $\mcH^n\llcorner M$-integrable vector field $\mfH$
	such that:
	\begin{enumerate}
		\item
		for any $X\in\mathfrak{X}(\overline{\mfR^{n+1}_+})$,
		there holds
		\begin{align}\label{formu-1stvariation-1}
			&\int_{M}\psi{\rm div}_MX\rd\mcH^n+
			\cos(\pi-\theta)\int_{T}{\rm div}_{\p\mfR^{n+1}_+}X\rd\mcH^n\notag\\
			=&-\int_{M}\psi X\cdot \mfH \rd\mcH^n
			-\sin\theta\int_{\Gamma}X\cdot E_{n+1} \rd\mcH^{n-1},
		\end{align}
		\item
		there exists $\bar\nu\in L^1(\p\mfR^{n+1}_+,\mcH^n\llcorner\Gamma)$ such that $\vert\bar\nu\vert=1$ for a.e. $x\in\Gamma$ and satisfies:
		\begin{align}\label{formu-1stvariation-2}
			\int_{T}{\rm div}_{\p\mfR^{n+1}_+}X\rd\mcH^n
			=\int_{\Gamma}X\cdot\bar\nu \rd\mcH^{n-1},\quad\forall X\in\mathfrak{X}_t(\overline{\mfR^{n+1}_+}).
		\end{align}

	\end{enumerate}
	
	In particular, we say that $({\rm var}(M,\psi),{\rm var}(T),{\rm var}(\Gamma))$ is a $\theta$-stationary triple if $\mfH=0$ for a.e. $x\in M$.
	In this case, the first variation formula simply reads
	\begin{align}\label{formu-1stvariation-3}
		&\int_{M}\psi{\rm div}_MX\rd \mcH^n+
		\cos(\pi-\theta)\int_{T}{\rm div}_{\p\mfR^{n+1}_+}X\rd\mcH^n\notag\\
		=&-\sin\theta\int_{\Gamma}X\cdot E_{n+1} \rd\mcH^{n-1},\quad\forall X\in\mathfrak{X}(\overline{\mfR_+^{n+1}}).
	\end{align}
\end{definition}

Note that we have already proved that the triple of varifolds $\left({\rm var}(\pr\Om),{\rm var}(T),{\rm var}(\Gamma)\right)$ satisfies the contact angle condition $\theta$ by virtue of \eqref{eq-divergence-3} and \eqref{eq-divergence-2}.
Now we focus on the blow-up behavior along $\Gamma$ of the triple, and we aim at showing that the blow-up limit of the triple is $\theta$-stationary.
Recall the assumption that $\Gamma$ is a smooth $(n-1)$-manifold in $\p\mfR^{n+1}$ in the Alexandrov-type theorem \cref{Thm-Alexandrov}. A direct consequence of the assumption is that $T$ is now a compact domain with smooth boundary $\Gamma$ in $\p\mfR^{n+1}_+\cong\mfR^n$,
and it is clear that at any $x\in\Gamma$,
any blow-up of $T$ would be a half $n$-plane in $\p\mfR^{n+1}_+$, whose boundary (a $(n-1)$-plane) is exactly the blow-up limit of $\Gamma$. 

Since (see below \eqref{formu-LZZ21-1.7}) the pair of varifolds $\left({\rm var(\pr\Om),{\rm var}(T)}\right)$ has fixed contact angle $\theta$ as in \cite[Definition 3.1]{DeMDeP21} with constant generalized mean curvature, it follows that
when $\theta\in(\pi/2,\pi)$,
the varifold 
\begin{align}\label{defn-Varifold-V}
	V:={\rm var}(\pr\Om)+\cos(\pi-\theta){\rm var}(T)
\end{align}
is a free boundary varifold in $\overline{\mfR^{n+1}_+}$ with constant generalized mean curvature.
Using \cite[Theorem 1.4]{DeMasi21}, we see that the density $\Theta^n(\vvert V\vvert,x)$ exists and is finite for every $x\in\Gamma$, and that the density function is upper semi-continuous on $\Gamma$, so that
\begin{align}\label{eq-density-V-theta}
	\Theta^n(\vvert V\vvert,x)\geq\frac{1+\cos(\pi-\theta)}{2}
	>0,\quad\forall x\in\Gamma, 
\end{align}
since \eqref{eq-density-V-theta} holds for every $x\in{\rm reg}\Gamma$.

With the nontrivial uniform lower density bound of $V$, we can follow the argument in \cite[Theorem 5.1, Step 2]{LZZ21}.
By using the reflection principle \cref{Lem-ReflectionPrinciple}, we conclude that:
${\rm VarTan}(V,x)$ is non-empty and any $C\in{\rm VarTan}(V,x)$ is a nontrivial, stationary $n$-rectifiable cone in $T_x\overline{\mfR^{n+1}_+}\cong\overline{\mfR^{n+1}_+}$.
Moreover, since we assume $\Gamma$ is a smooth $(n-1)$-manifold in $\p\mfR^{n+1}$, we know that any blow-up of $T$ at $x\in\Gamma$ would be a half $n$-plane in $\p\mfR^{n+1}_+$, and hence $\Theta^n(\vvert {\rm var}(T)\vvert,x)=\frac{1}{2}$, which together with \eqref{eq-density-V-theta} shows that $\Theta^n(\vvert{\rm var}(\pr\Om)\vvert,x)\geq\frac{1}{2}$.
Therefore, ${\rm VarTan}({\rm var}(\pr\Om),x)$ is non-empty and any blow-up limit is a non-trivial rectifiable cone.

For the case $\theta\in(0,\pi/2)$, we consider the varifold (see \cref{Rem-replacement})
$$\tilde V:={\rm var}(\pr\Om)+\cos\theta{\rm var}(\p\mathbf{B}_+\cap\p\mfR^{n+1}_+\setminus T)$$ instead of $V$ in \eqref{defn-Varifold-V}, using the same approach we may conclude that ${\rm VarTan}({\rm var}(\pr\Om),x)$ is non-empty and any blow-up limit is a non-trivial rectifiable cone.

To proceed, we fix any point $x\in\Gamma$.
By the argument above and thanks again to the assumption that $\Gamma\subset\p\mfR^{n+1}$ and $T$ are smooth,
we can find a sequence $\rho_j\searrow0$ as $j\rightarrow\infty$, such that
there exists rectifiable cones ${\rm var}(\tilde M,\psi_1), {\rm var}(\tilde T)$ and ${\rm var}(\tilde\Gamma)$ satisfying (here $\frac{T-x}{\rho_j}\ra\tilde T$ as $j\ra\infty$ is a $n$-dimensional half-space in $\p\mfR^{n+1}_+$ and $\tilde\Gamma$ is the boundary of $\tilde T$ in $\p\mfR^{n+1}_+$)
\begin{align*}
	\frac{1}{\rho_j^n}(\bseta_{x,\rho_j})_\#(\mcH^n\llcorner \pr\Om)
	=\mcH^n\llcorner(\frac{\pr\Om-x}{\rho_j})
	\overset{\ast}{\rightharpoonup}\psi_1\mcH^n\llcorner\tilde M.
\end{align*}
In particular,
by virtue of \eqref{eq-divergence-3} and the stationarity of 
${\rm var}(\tilde M,\psi_1)+\cos(\pi-\theta){\rm var}(\tilde T)$, we have:
\begin{align}
	\int_{\tilde M}\psi_1{\rm div}_{\tilde M}X\rd\mcH^n
	+\cos(\pi-\theta)\int_{\tilde T}{\rm div}_{\p\mfR^{n+1}_+}X\rd \mcH^n
	= -\sin\theta\int_{\tilde\Gamma}X\cdot E_{n+1} \rd\mcH^{n-1},\quad\forall X\in\mathfrak{X}(\mfR_+^{n+1}).
\end{align}
On the other hand, it is clear that we have
\begin{align}
	\int_{\tilde T}{\rm div}_{\p\mfR^{n+1}_+}X\rd\mcH^n =\int_{\tilde\Gamma}X\cdot\bar\nu \rd\mcH^{n-1},\quad\forall X\in\mathfrak{X}_t(\overline{\mfR^{n+1}_+}),
\end{align}
where $\bar\nu$ denotes the outer unit normal of $\tilde T$ along its boundary $\tilde\Gamma$ in $\p\mfR^{n+1}$.

Combining these facts, we find that
the blow-up limit of the triple is $\theta$-stationary in accordance with \cref{Defn-contactangle-triple}.


\subsection{Maximum Principles}\label{Sec-2-10}
We end the preliminary section with the following crucial maximum principles for rectifiable varifolds.

\begin{lemma}[{\cite[Lemma 3]{DM19}}]\label{Lem-MP1}
    Let $M$ be a normalized locally $\mcH^n$-rectifiable set such that ${\rm var}(M,\psi)$ is stationary on $\mfR^{n+1}$.
    If $M$ is a cone (that is, $M=tM$ for every $t>0$), and $M$ is contained in a closed half-space $H$ with $0\in\p H$, then $M=\p H$.
    In particular, $M$ cannot be contained in the convex intersection of two distinct, nonopposite half-spaces containing the origin.
\end{lemma}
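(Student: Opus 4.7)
The plan is to exploit the cone structure of $M$ together with a single well-chosen radial test field for the stationarity of $V={\rm var}(M,\psi)$. Let $\nu$ be the inward unit normal to $H$ at $0$, so that $H=\{x\in\mfR^{n+1}:x\cdot\nu\geq 0\}$ and the linear function $u(x):=x\cdot\nu$ satisfies $u\geq 0$ on $M$ with $u(0)=0$. The critical structural input will be that at $\mcH^n$-a.e.\ point $x$ of $M$, the ray $\{tx:t>0\}\subset M$ lies inside the approximate tangent plane $T_xM$, so the tangential projection satisfies $x^T=x$.

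For a non-negative, non-increasing cutoff $\phi\in C^1_c([0,\infty))$, I would plug $X(x):=\phi(|x|)\,\nu\in\mathfrak{X}_c(\mfR^{n+1})$ into the first variation. A direct tangential divergence computation at $\mcH^n$-a.e.\ $x\in M\setminus\{0\}$ yields
\begin{align*}
    {\rm div}_MX(x)=\frac{\phi'(|x|)}{|x|}\,(x^T\cdot\nu)=\frac{\phi'(|x|)}{|x|}\,u(x),
\end{align*}
where the second equality uses the cone identity $x^T=x$. The stationarity of $V$ then forces
\begin{align*}
    \int_M\frac{\phi'(|x|)}{|x|}\,u(x)\,\psi\,\rd\mcH^n=0.
\end{align*}
Since $\phi'\leq 0$ and $u,\psi\geq 0$, the integrand is pointwise non-positive, hence vanishes $\mcH^n\llcorner M$-a.e. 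Letting $\phi$ range over a family of cutoffs with $\phi'<0$ on $(0,R)$ and $R\to\infty$ yields $u=0$ $\mcH^n$-a.e.\ on $M$. Because $M$ is normalized, so that $M={\rm spt}(\mcH^n\llcorner M)$, this inclusion upgrades to $M\subset\{u=0\}=\p H$.

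To promote the inclusion $M\subset\p H$ to equality, I would apply the constancy theorem for stationary rectifiable varifolds supported on the connected hyperplane $\p H$: the multiplicity $\psi$ must be $\mcH^n$-a.e.\ equal to a constant $c\geq 0$ on $\p H$, and since $M$ is nonempty the constant is strictly positive, so normalization gives $M=\p H$. For the ``in particular'' assertion: if $M\subset H_1\cap H_2$ for two distinct, nonopposite half-spaces $H_1,H_2$ with $0\in\p H_1\cap\p H_2$, applying the main part to each containment forces $M=\p H_1=\p H_2$, contradicting the fact that distinct nonopposite half-spaces through $0$ have distinct bounding hyperplanes.

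The main technical point will be the justification of the cone identity $x^T=x$ at $\mcH^n$-a.e.\ $x\in M$, which requires translating the set-theoretic homogeneity $tM=M$ into membership of the radial direction in the approximate tangent plane; this is a standard density/rectifiability argument but is really the only place where cone-specific information enters. Once that identity is in hand, the argument reduces to a single sign-and-integrate step, and there are no hidden analytic obstacles.
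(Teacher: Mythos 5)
Your argument is correct and is essentially the same radial test-field computation that the paper (which cites this lemma from Delgadino--Maggi without reproving it) uses for its boundary analogues \cref{Lem-MP2} and \cref{Lem-MP3}: plug $X=\phi(|x|)\nu$ into the first variation, use $\hat x\in T_xM$ for $\mcH^n$-a.e.\ $x$ (the cone identity, asserted with the same brevity in the paper's own proofs) to reduce ${\rm div}_MX$ to $\phi'(|x|)\,\nu\cdot\hat x$, and conclude by the sign of the integrand, normalization, and the constancy theorem. No gaps beyond the standard facts you already flag.
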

Enlightened by the proof of the interior maximum principle \cref{Lem-MP1}, we derive the following boundary maximum principles.
\begin{lemma}\label{Lem-MP2}
    Let $M\subset\overline{\mfR^{n+1}_+}$ be a normalized locally $\mcH^n$-rectifiable set, let $\psi$ be a positive locally $\mcH^n$-integrable function on $M$ with $\psi(x)=\psi(tx)$ for $x\in M$ and $t>0$,
    such that ${\rm var}(M,\psi)$ is a stationary varifold with free boundary in $\overline{\mfR^{n+1}_+}$. If $M$ is a cone (that is, $M=tM$ for every $t>0$), and $M$ is contained in a closed half-space $H$ with $0\in\p H$ and $H$ meets $\p\mfR^{n+1}_+$ orthogonally, then $M=\p H\cap\overline{\mfR^{n+1}_+}$.
    In particular, $M$ cannot be contained in the convex intersection of two distinct, nonopposite half-spaces containing the origin and intersecting $\p\mfR^{n+1}_+$ orthogonally.
\end{lemma}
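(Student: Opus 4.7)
The strategy is to reduce the boundary statement to the interior statement \cref{Lem-MP1} by reflection across $\p\mfR^{n+1}_+$. The two key geometric facts that make this work are (i) the reflection principle \cref{Lem-ReflectionPrinciple} produces a varifold stationary on all of $\mfR^{n+1}$, and (ii) the orthogonality hypothesis on $H$ says exactly that $H$ is invariant under the reflection $\theta_{n+1}$.

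First, I would form the doubled varifold $\bar V = V + (\theta_{n+1})_\# V$ for $V = {\rm var}(M,\psi)$. By \cref{Lem-ReflectionPrinciple}, $\bar V$ is stationary on $\mfR^{n+1}$. Writing $\bar M := M\cup\theta_{n+1}(M)$ and $\bar\psi := \psi\cdot\mathbf{1}_M + (\psi\circ\theta_{n+1})\cdot\mathbf{1}_{\theta_{n+1}(M)}$ (so that $\bar\psi$ doubles on $M\cap\p\mfR^{n+1}_+$), one has $\bar V={\rm var}(\bar M,\bar\psi)$, and $\bar M$ is a normalized locally $\mcH^n$-rectifiable set. Next, because $\theta_{n+1}$ commutes with every dilation $\bsmu_t$ and $\psi$ is $0$-homogeneous, $\bar M$ is again a cone and $\bar\psi$ is homogeneous on $\bar M$.

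Second, using that $H$ is a closed half-space with $0\in\p H$ meeting $\p\mfR^{n+1}_+$ orthogonally, the hyperplane $\p H$ contains the line $\mfR\cdot E_{n+1}$, so $\theta_{n+1}(H)=H$. Consequently $\theta_{n+1}(M)\subset\theta_{n+1}(H)=H$, and thus $\bar M\subset H$. Applying the interior maximum principle \cref{Lem-MP1} to the stationary cone $\bar V$ yields $\bar M=\p H$.

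Third, I would descend from $\bar M$ back to $M$. Since $M\subset\overline{\mfR^{n+1}_+}$, for every $y\in\p H\cap\mfR^{n+1}_+$ the point $\theta_{n+1}(y)$ lies in the open lower half-space and hence not in $M$; the relation $\bar M=M\cup\theta_{n+1}(M)=\p H$ then forces $y\in M$. Because $M$ is normalized (in particular closed) and $M$ is a cone containing all of $\p H\cap\mfR^{n+1}_+$, we conclude by closure
\[
M\supset\overline{\p H\cap\mfR^{n+1}_+}=\p H\cap\overline{\mfR^{n+1}_+},
\]
and the reverse inclusion is the hypothesis $M\subset H$ combined with $M\subset\overline{\mfR^{n+1}_+}$ and $M\subset \bar M=\p H$. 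This gives $M=\p H\cap\overline{\mfR^{n+1}_+}$.

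Finally, the ``in particular'' clause follows directly: if $M\subset H_1\cap H_2$ with the $H_i$ distinct, nonopposite, each containing $0$ and orthogonal to $\p\mfR^{n+1}_+$, then applying the conclusion separately gives $\p H_1\cap\overline{\mfR^{n+1}_+}=M=\p H_2\cap\overline{\mfR^{n+1}_+}$. Since $\p H_i\cap\overline{\mfR^{n+1}_+}$ is a closed half $n$-plane whose linear span is the full hyperplane $\p H_i$, this equality forces $\p H_1=\p H_2$, and hence $H_1$ and $H_2$ either coincide or are opposite, a contradiction. The only nontrivial point that requires care is the normalization/homogeneity of $\bar\psi$ on the boundary set $M\cap\p\mfR^{n+1}_+$, where the multiplicity doubles; but this is compatible with the hypotheses of \cref{Lem-MP1}, so it does not affect the argument.
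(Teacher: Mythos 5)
Your proof is correct, but it takes a genuinely different route from the paper. The paper proves \cref{Lem-MP2} by a direct first-variation computation: since $H$ meets $\p\mfR^{n+1}_+$ orthogonally, its normal $\nu$ is tangent to $\p\mfR^{n+1}_+$, so $X(x)=\varphi(\vert x\vert)\nu$ is an admissible test field in $\mathfrak{X}_t(\overline{\mfR^{n+1}_+})$; the cone property gives ${\rm div}_MX=\varphi'(\vert x\vert)\,\nu\cdot\hat x$, and the sign condition $M\subset H$ forces $\nu\cdot\hat x=0$ for $\mcH^n$-a.e.\ $x\in M$. You instead double the varifold via \cref{Lem-ReflectionPrinciple}, observe that the orthogonality hypothesis is exactly the statement $\theta_{n+1}(H)=H$, apply \cref{Lem-MP1} to the reflected stationary cone, and then descend; your handling of the multiplicity doubling on $M\cap\p\mfR^{n+1}_+$ and of the passage from $\bar M=\p H$ back to $M=\p H\cap\overline{\mfR^{n+1}_+}$ is sound, and the "in particular" clause is argued in the same way in both treatments. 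Each approach has a payoff: yours is shorter and reuses \cref{Lem-MP1} and \cref{Lem-ReflectionPrinciple} as black boxes, with the orthogonality hypothesis entering in the cleanest possible way; the paper's direct computation, by contrast, is the template that is then extended in \cref{Lem-MP3} and \cref{Cor-MP3} to half-spaces meeting $\p\mfR^{n+1}_+$ at a non-right angle $\alpha$, where the reflection trick is no longer available and one must instead decompose $\nu=\sin\alpha\, e_1-\cos\alpha\, E_{n+1}$ and test with the tangential and normal pieces separately.
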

\begin{proof}[Proof of \cref{Lem-MP2}]
Let $H=\left\{z\in\mfR^{n+1}:z\cdot\nu\leq0\right\}$, where $\nu\in\mathbf{S}^{n-1}\subset\p\mfR^{n+1}_+\cong\mfR^n$.
Given $\varphi\in C_c^\infty([0,\infty))$ with $0\leq\varphi\leq1$, $\varphi(r)=1$ on $[0,\epsilon)$ for some $\ep>0$, and $\varphi'(r)<0$ on $\{0<\varphi<1\}$. Now we set $X(x)=\varphi(\vert x\vert)\nu$ for $x\in\mfR^{n+1}$, clearly $X\in\mathfrak{X}_t(\mfR^{n+1}_+)$, and $\nabla X=\varphi'(\vert x\vert)\nu\otimes\hat x$, where $\hat x=x/\vert x\vert$ if $x\neq0$. Let $\nu_M:M\ra\mathbf{S}^n$ be a Borel vector field such that $T_xM=\nu_M(x)^\perp$ for $\mcH^n$-a.e. $x\in M$. Since $M$ is a cone, we have $\hat x\cdot\nu_M(x)=0$ for $\mcH^n$-a.e. $x\in M$, it follows that
\begin{align*}
    {\rm div}_MX={\rm div}X-\nu_M\cdot\nabla X[\nu_M]=\varphi'(\vert x\vert)(\nu\cdot\hat x-(\nu_M\cdot\nu)(\nu_M\cdot\hat x))=\varphi'(\vert x\vert)(\nu\cdot\hat x),
\end{align*}
and hence by the fact that ${\rm var}(M,\psi)$ is a free boundary stationary varifold in $\overline{\mfR^{n+1}_+}$, we have
\begin{align}
    0=\int_M{\rm div}_MX\psi \rd\mcH^n
    =\int_M\varphi'(\vert x\vert)(\nu\cdot\hat x)\psi(x)\rd\mcH^n(x).
\end{align}
Since $M\subset H$, we know that $\nu\cdot\hat x\leq0$ for every $x\in M$. The arbitrariness of $\ep$ then implies that $\nu\cdot\hat x=0$ for $\mcH^n$-a.e. $x\in M$, and hence $M=\p H\cap\overline{\mfR^{n+1}_+}$.

The rest of the statement in \cref{Lem-MP2} follows easily.
The lemma is thus proved.
\end{proof}
{\color{black}
\begin{lemma}\label{Lem-MP3}
    Given $\theta\in[\pi/2,\pi)$.
    Let $M\subset\overline{\mfR^{n+1}_+}, T\subset\p\mfR^{n+1}_+$ be normalized locally $\mcH^n$-rectifiable sets,
    let $\Gamma\subset\p\mfR^{n+1}_+$ be a normalized locally $\mcH^{n-1}$-rectifiable set.
    Suppose that ${\rm var}(M,\psi_1)$, ${\rm var}(T,\psi_2)$ and ${\rm var}(\Gamma,\psi_3)$ are rectifiable cones (in the sense that $M=tM$ for every $t>0$ and $\psi_1$ is a positive locally $\mcH^n$-integrable function on $M$ with $\psi_1(x)=\psi_1(tx)$ for $x\in M$ and $t>0$),
    such that
    the triple $({\rm var}(M,\psi_1),{\rm var}( T,\psi_2),{\rm var}(\Gamma,\psi_3))$ is $\theta$-stationary as in \cref{Defn-contactangle-triple}.
    If $M$ is contained in a closed half-space $H^-=\{z\in\mfR^{n+1}:z\cdot\nu\leq0\}$ with $0\in\p H^-, \nu\in\mfS^n$,
    then  $\alpha:=\arccos{\left(\nu\cdot(-E_{n+1})\right)}\le \theta$.
    Moreover, if $\alpha=\theta$, then $M=\p H^-\cap\overline{\mfR^{n+1}_+}$.
    
\end{lemma}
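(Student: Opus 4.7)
The plan is to first dispense with the trivial case: if $\alpha<\pi/2$, then since $\theta\in[\pi/2,\pi)$ one has $\alpha<\pi/2\leq\theta$, so the substantive work is only in the case $\alpha\in[\pi/2,\pi]$. In that range $\nu\cdot E_{n+1}=-\cos\alpha\geq 0$, which is precisely what makes the radial test field $X(x)=\varphi(|x|)\,\nu$, with $\varphi\in C_c^1([0,\infty))$ nonnegative and non-increasing, admissible in $\mathfrak{X}(\overline{\mfR^{n+1}_+})$ and hence available in \eqref{formu-1stvariation-3}.

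I plan to feed three test fields into the first variation identities: the radial $X=\varphi(|x|)\,\nu$, the axial $Y(x)=\varphi(|x|)\,E_{n+1}$, and the tangential $W(x)=\varphi(|x|)\,\nu^H$ with $\nu^H:=\nu+\cos\alpha\,E_{n+1}$ the horizontal projection of $\nu$, tested through condition (2) of \cref{Defn-contactangle-triple}. The cone structure $M=tM$ combined with $\hat x\in T_xM$ gives $\hat x\cdot\nu_M=0$, simplifying ${\rm div}_M(\varphi(|x|)\,Y)$ to $\varphi'(|x|)(Y\cdot\hat x)$ for any constant $Y\in\mfR^{n+1}$; analogous simplifications occur on $T$ using $\hat x\cdot E_{n+1}=0$ there. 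The conic coarea factorization $\rd\mcH^n\llcorner M=r^{n-1}\rd r\,\rd\mcH^{n-1}\llcorner M_1$ (and similarly on $\Gamma$) lets the common scalar $\int_0^\infty\varphi(r)r^{n-2}\rd r>0$ drop out of every identity, collapsing each test to a relation among unit-sphere quantities on $M_1=M\cap\p B_1$, $T_1$ and $\Gamma_1$.

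Setting $\mcA_M=\int_{M_1}\psi_1(\nu\cdot\omega)\,\rd\mcH^{n-1}$, $\mcA'_M=\int_{M_1}\psi_1(\omega\cdot E_{n+1})\,\rd\mcH^{n-1}$, $\mcA_T=\int_{T_1}\psi_2(\nu\cdot\omega)\,\rd\mcH^{n-1}$, and $\mcA_\Gamma=\int_{\Gamma_1}\psi_3\,\rd\mcH^{n-2}$, the three tests produce
\[
(n-1)\bigl[\mcA_M+\cos(\pi-\theta)\,\mcA_T\bigr]=-\sin\theta\cos\alpha\,\mcA_\Gamma,\qquad(n-1)\mcA'_M=\sin\theta\,\mcA_\Gamma,
\]
together with $-(n-1)\mcA_T=\int_{\Gamma_1}(\nu^H\cdot\bar\nu)\,\rd\mcH^{n-2}$, which by Cauchy--Schwarz and $|\nu^H|=\sin\alpha$ gives $(n-1)|\mcA_T|\leq\sin\alpha\,\mcA_\Gamma$. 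Substituting the $\mcA_\Gamma$-identity and rearranging for $\mcA_M$ yields $\mcA_M=\cos\theta\,\mcA_T-\cos\alpha\,\mcA'_M$. The sign information $\mcA_M\leq 0$ (from $M\subset H^-$) and $\mcA'_M\geq 0$ (from $M\subset\overline{\mfR^{n+1}_+}$), combined with $\cos\alpha,\cos\theta\leq 0$, produces the two-sided pinch $\frac{\cos\alpha}{\cos\theta}\mcA'_M\leq\mcA_T\leq\frac{\sin\alpha}{\sin\theta}\mcA'_M$. In the nondegenerate case $\mcA'_M>0$, division and cross-multiplication by $\cos\theta\sin\theta<0$ rearranges this into $\sin(\theta-\alpha)\geq 0$, i.e.\ $\alpha\leq\theta$.

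For the equality case $\alpha=\theta$ the chain collapses to equalities, forcing $\mcA_M=0$; combined with $\psi_1>0$ and $\nu\cdot\omega\leq 0$ on $M_1$, this yields $M\subset\p H^-\cap\overline{\mfR^{n+1}_+}$, and a constancy argument --- for example, applying \cref{Lem-MP1} to the stationary doubled varifold $\bar V$ obtained by reflecting $V={\rm var}(M,\psi_1)+\cos(\pi-\theta){\rm var}(T,\psi_2)$ across $\p\mfR^{n+1}_+$ --- then fills $M$ out to the entire half-plane. The main obstacle I anticipate is controlling $\mcA_T$: since $T$ is not assumed to lie inside $H^-$, the sign of $\nu\cdot\omega$ on $T_1$ is not a priori known, and the $X$-test alone provides only the lower bound. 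Supplying the matching upper bound is precisely the role of the tangential $W$-test through \eqref{formu-1stvariation-2}, which makes the Cauchy--Schwarz bound $|\nu^H\cdot\bar\nu|\leq\sin\alpha$ available. A secondary delicacy is the degenerate sub-case $\mcA'_M=0$ (equivalent to $M\subset\p\mfR^{n+1}_+$), which requires a short separate argument --- using a horizontal test variant --- to confirm $\alpha=\pi/2\leq\theta$ there.
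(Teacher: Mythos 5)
Your proposal is correct and follows essentially the same route as the paper's proof: the paper tests the $\theta$-stationarity with $\varphi(\vert x\vert)e_1$ (the horizontal direction of $\nu$) and $\varphi(\vert x\vert)E_{n+1}$, uses the cone structure to reduce the tangential divergences to $\varphi'(\vert x\vert)(e\cdot\hat x)$, controls the $\Gamma$-term by the same Cauchy--Schwarz step as your $\vert\nu^H\cdot\bar\nu\vert\leq\sin\alpha$, and combines the identities linearly to reach $\sin(\theta-\alpha)\geq0$, so your coarea reduction to unit-sphere integrals and the two-sided pinch on $\mcA_T$ are reorganizations of the same algebra (note only that the division by $\cos\theta$ needs a separate word when $\theta=\pi/2$). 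One caution on the equality case: the suggested constancy argument via reflecting $V$ and invoking \cref{Lem-MP1} does not apply as stated, since the doubled support $M\cup\theta_{n+1}(M)\cup T$ need not lie in a half-space when $\alpha\neq\pi/2$; the paper itself passes directly from $\nu\cdot\hat x=0$ $\mcH^n$-a.e.\ to $M=\p H^-\cap\overline{\mfR^{n+1}_+}$ without the reflection.
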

\begin{proof}[Proof of \cref{Lem-MP3}]
By definition of $\alpha$, we readily see that there exists a constant vector field $e_1\in T\p\mfR^{n+1}_+$ such that
\begin{align*}
    \nu=\sin\alpha e_1-\cos\alpha E_{n+1}.
\end{align*}

Given $\varphi\in C_c^\infty([0,\infty))$ with $0\leq\varphi\leq1$, $\varphi(r)=1$ on $[0,\epsilon)$ for some $\ep>0$, and $\varphi'(r)<0$ on $\{0<\varphi<1\}$. Now we set $X_1(x)=\varphi(\vert x\vert)e_1$ for $x\in\mfR^{n+1}$, clearly $X_1\in\mathfrak{X}_t(\mfR^{n+1}_+)$, and $\nabla X_1=\varphi'(\vert x\vert)e_1\otimes\hat x$, where $\hat x=x/\vert x\vert$ if $x\neq0$. Let $\nu_M:M\ra\mathbf{S}^n$ be a Borel vector field such that $T_xM=\nu_M(x)^\perp$ for $\mcH^n$-a.e. $x\in M$. Since $M$ is a cone, we have $\hat x\cdot\nu_M(x)=0$ for $\mcH^n$-a.e. $x\in M$, and hence
\begin{align*}
    {\rm div}_MX_1(x)
    =&{\rm div}X_1(x)-\nu_M(x)\cdot\nabla X_1[\nu_M(x)]\\
    =&\varphi'(\vert x\vert)\left(e_1\cdot\hat x-(e_1\cdot\nu_M(x))(\hat x\cdot\nu_M(x))\right)=\varphi'(\vert x\vert)e_1\cdot\hat x.
\end{align*}
Similarly, since $e_1\cdot E_{n+1}=0$,
we have \begin{align*}
    {\rm div}_{\p\mfR^{n+1}_+}X_1
    ={\rm div}X_1-E_{n+1}\cdot\nabla X_1[E_{n+1}]
    =\varphi'(\vert x\vert)e_1\cdot\hat x,\quad\forall x\in\p\mfR^{n+1}_+.
\end{align*}
By virtue of the fact that the triple $({\rm var}(M,\psi_1),{\rm var}( T,\psi_2),{\rm var}(\Gamma,\psi_3))$ is $\theta$-stationary, testing \eqref{formu-1stvariation-3} and \eqref{formu-1stvariation-2} with $X_1$, we find
\begin{align}\label{eq-LemMP3-1}
    &\int_M \varphi'(\vert x\vert)e_1\cdot\hat x\psi_1(x)\rd\mcH^n(x)
    =\cos\theta\int_{T}\varphi'(\vert x\vert)e_1\cdot\hat x\psi_2(x)\rd\mcH^n(x)\notag\\
    =&\cos\theta\int_\Gamma \varphi(\vert x\vert)e_1\cdot\bar\nu(x)\psi_3(x) \rd\mcH^{n-1}(x)
    \leq-\cos\theta\int_\Gamma\varphi(\vert x\vert)\psi_3(x)\rd\mcH^{n-1}(x).
\end{align}
In the last inequality, we used $\theta\in [\frac{\pi}{2},\pi)$.

On the other hand, we consider $X_2(x)=\varphi(\vert x\vert)E_{n+1}$ for $x\in\mfR^{n+1}$, and we have $\nabla X_2=\varphi'(\vert x\vert)E_{n+1}\otimes\hat x$.
Again, since $M$ is a cone, we have
\begin{align*}
    {\rm div}_MX_2(x)
    =&{\rm div}X_2(x)-\nu_M(x)\cdot\nabla X_2[\nu_M](x)\\
    =&\varphi'(\vert x\vert)E_{n+1}\cdot\hat x,
\end{align*}
and also
\begin{align*}
    {\rm div}_{\p\mfR^{n+1}_+}X_2={\rm div}X_2-E_{n+1}\cdot\nabla X_2[E_{n+1}]=0,\quad\forall x\in\p\mfR^{n+1}_+.
\end{align*}
Testing \eqref{formu-1stvariation-3} with $X_2$, we find
\begin{align}\label{eq-LemMP3-2}
    \int_M\varphi'(\vert x\vert)E_{n+1}\cdot\hat x\psi_1(x)\rd\mcH^n(x)
    =-\sin\theta\int_\Gamma \varphi(\vert x\vert)\psi_3(x)\rd\mcH^{n-1}(x).
\end{align}
Recall that $\nu=\sin\alpha e_1-\cos\alpha E_{n+1}$, combining with \eqref{eq-LemMP3-1} and \eqref{eq-LemMP3-2}, we obtain
\begin{align}\label{eq-LemMP3-3}
    &\int_M\varphi'(\vert x\vert)\nu\cdot\hat x\psi_1(x)\rd\mcH^n(x)
    \leq-\sin\alpha\cos\theta\int_\Gamma\varphi(\vert x\vert)\psi_3(x)\rd\mcH^{n-1}(x)\notag\\
    &+\sin\theta\cos\alpha\int_\Gamma\varphi(\vert x\vert)\psi_3(x)\rd\mcH^{n-1}(x)
    =\sin(\theta-\alpha)\int_\Gamma\varphi(\vert x\vert)\psi_3(x)\rd\mcH^{n-1}(x).
\end{align}
Since $M\subset H^-$, we have
\begin{align*}
    \int_M\varphi'(\vert x\vert)\nu\cdot\hat x\psi_1(x)\rd\mcH^n(x)
    \geq0.
\end{align*}
It follows that $\theta\ge \alpha$.

On the other hand, if $\alpha=\theta$, then from the argument above (in particular, \eqref{eq-LemMP3-3}) we know that $\nu\cdot\hat x=0$ for $\mcH^n$-a.e. $x\in M$, which implies $M=\p H^-\cap\overline{\mfR^{n+1}_+}$.
The proof is thus completed.
\end{proof}

We can see from the above proof that the only reason that we have to restrict $\theta\in[\pi/2,\pi)$ is for deriving the inequality \eqref{eq-LemMP3-1}, in other words, if the equality holds in \eqref{eq-LemMP3-1} (consequently, equality holds in \eqref{eq-LemMP3-3}), then we can remove the angle restriction.
In this regard, we have the following maximum principle.
\begin{lemma}\label{Cor-MP3}
Given $\theta\in(0,\pi)$ and a closed half-space $H^-=\{z\in\mfR^{n+1}:z\cdot\nu\leq0\}$ with $0\in\p H^-$, $\nu\in\mfS^n$ (and we let $H^+$ be the antipodal closed half-space).
Let $M\subset\overline{\mfR^{n+1}_+}$ be a normalized locally $\mcH^n$-rectifiable set,
$T=H^+\cap\p\mfR^{n+1}_+,\Gamma=\p H^+\cap\p\mfR^{n+1}_+$.
Suppose that ${\rm var}(M,\psi_1)$ is a rectifiable cone (in the sense that $M=tM$ for every $t>0$ and $\psi_1$ is a positive locally $\mcH^n$-integrable function on $M$ with $\psi_1(x)=\psi_1(tx)$ for $x\in M$ and $t>0$),
such that
the triple $({\rm var}(M,\psi_1),{\rm var}( T),{\rm var}(\Gamma))$ is $\theta$-stationary as in \cref{Defn-contactangle-triple}.
If $M$ is contained in $H^-$,
then $\alpha:=\arccos{\left(\nu\cdot(-E_{n+1})\right)}\le\theta$.
Moreover, if $\alpha=\theta$, then $M=\p H^-\cap\overline{\mfR^{n+1}_+}$.
\end{lemma}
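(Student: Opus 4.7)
The plan is to mirror the proof of \cref{Lem-MP3} step by step, using the same two test vector fields $X_1(x) = \varphi(\vert x\vert) e_1$ and $X_2(x) = \varphi(\vert x\vert) E_{n+1}$, where $\varphi$ is the standard nonincreasing cutoff and $e_1 \in T\p\mfR^{n+1}_+$ is the unit vector arising from the decomposition $\nu = \sin\alpha \cdot e_1 - \cos\alpha \cdot E_{n+1}$ (valid when $\alpha \in (0,\pi)$; the cases $\alpha = 0,\pi$ are degenerate and can be handled separately). The key observation that enables the removal of the angle restriction $\theta \in [\pi/2, \pi)$ is the rigid form of $T$ and $\Gamma$ here: since $T = H^+ \cap \p\mfR^{n+1}_+$ is a flat half-hyperplane with boundary $\Gamma = \p H^+ \cap \p\mfR^{n+1}_+$, the outer unit conormal $\bar\nu$ of $T$ along $\Gamma$ in $\p\mfR^{n+1}_+$ is simply $-e_1$. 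Testing the boundary identity \eqref{formu-1stvariation-2} against $X_1$ then produces the explicit equality
\begin{align*}
\int_T \varphi'(\vert x\vert) e_1 \cdot \hat x \rd\mcH^n
= \int_\Gamma \varphi(\vert x\vert) e_1 \cdot \bar\nu \rd\mcH^{n-1}
= -\int_\Gamma \varphi(\vert x\vert) \rd\mcH^{n-1},
\end{align*}
which upgrades the inequality \eqref{eq-LemMP3-1} used in the proof of \cref{Lem-MP3} to an exact identity.

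With this improvement in hand, I would combine it with the calculation for $X_2$, which is identical to the one in \cref{Lem-MP3} and yields $\int_M \varphi'(\vert x\vert) E_{n+1} \cdot \hat x \, \psi_1(x) \rd\mcH^n = -\sin\theta \int_\Gamma \varphi(\vert x\vert) \rd\mcH^{n-1}$. Taking the linear combination corresponding to $\nu = \sin\alpha \cdot e_1 - \cos\alpha \cdot E_{n+1}$ then produces the exact identity
\begin{align*}
\int_M \varphi'(\vert x\vert) \, \nu \cdot \hat x \, \psi_1(x) \rd\mcH^n
= \sin(\theta - \alpha) \int_\Gamma \varphi(\vert x\vert) \rd\mcH^{n-1},
\end{align*}
valid for every $\theta \in (0, \pi)$. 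The bound $\alpha \leq \theta$ then drops out from non-negativity of the left-hand side (indeed $\varphi' \leq 0$ while $\nu \cdot \hat x \leq 0$ on $M$ since $M \subset H^-$) together with non-negativity of the right-hand side. For the rigidity statement, when $\alpha = \theta$ the right-hand side vanishes, so the non-negative integrand on the left must vanish $\mcH^n$-a.e.; letting the support of $\varphi'$ exhaust $(0,\infty)$ forces $\nu \cdot \hat x = 0$ for $\mcH^n$-a.e.\ $x \in M$, whence $M \subset \p H^- \cap \overline{\mfR^{n+1}_+}$. The sharpening to equality $M = \p H^- \cap \overline{\mfR^{n+1}_+}$ is extracted exactly as at the end of \cref{Lem-MP3}.

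The main conceptual ingredient is thus the preliminary identification $\bar\nu = -e_1$ stemming from the rigidity of $T$ and $\Gamma$, which converts the sign-sensitive step at \eqref{eq-LemMP3-1} into an identity and removes the need to absorb a positive quantity using $\cos\theta \leq 0$. Once this is recognized, the remainder of the argument is essentially mechanical; the only residual point of care is the degenerate boundary case $\alpha \in \{0,\pi\}$, where $\alpha = 0$ gives $\alpha \leq \theta$ trivially and $\alpha = \pi$ forces $M \subset \p\mfR^{n+1}_+$, a configuration that can be ruled out or disposed of directly.
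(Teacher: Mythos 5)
Your proposal is correct and follows essentially the same route as the paper: the paper's proof of this lemma likewise observes that $e_1=-\bar\nu$ along $\Gamma$ (from the rigidity of $T$ and $\Gamma$), which upgrades \eqref{eq-LemMP3-1} to an equality and hence \eqref{eq-LemMP3-3} to the exact identity you write, after which the sign argument and the rigidity case proceed as in \cref{Lem-MP3}.
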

\begin{proof}
We can proceed as the proof of \cref{Lem-MP3} and notice the fact that $e_1=-\bar\nu$ along $\Gamma$
since $T=H^+\cap\p\mfR^{n+1}_+$ and $\Gamma=\p H^-\cap\p\mfR^{n+1}_+$.
In particular,
this implies that \eqref{eq-LemMP3-1} holds as an equality and consequently we obtain the equality in \eqref{eq-LemMP3-3}.
By virtue of the fact that $M\subset H^-$, we can derive the same conclusion as that of \cref{Lem-MP3} for both of the cases when $\pi>\alpha>\theta>0$ and $\alpha=\theta$.
This completes the proof.
\end{proof}

Finally, we are going to exploit the interior strong maximum principle for rectifiable varifolds derived by Sch\"atzle.
\begin{theorem}[{\cite[Theorem 6.2]{Schatzle04}}]\label{Thm-SMP}
    Let $M$ be a normalized locally $\mcH^n$-rectifiable set with distributional mean curvature vector $\mathbf{H}\in L^p(\psi\mcH^n\llcorner M;\mfR^{n+1})$, for some $p>\max\{2,n\}$.
    
    Pick $\nu\in\mathbf{S}^n$, $h_0\in\mfR$, and consider a connected open set $U\subset\nu^\perp$ such that
    \begin{align}
        \varphi(z)=\inf\{h>h_0:z+h\nu\in M\},\quad z\in U,
    \end{align}
    satisfies $\varphi(z)\in(h_0,\infty)$ for every $z\in U$.
    
    If $\eta\in W^{2,p}(U;(h_0,\infty))$ is such that $\eta\leq\varphi$ on $U$ with $\eta(z_0)=\varphi(z_0)$ for some $z_0\in U$, then it cannot be that
    \begin{align}
        -{\rm div}\left(\frac{\nabla\eta}{\sqrt{1+\vert\nabla\eta\vert^2}}\right)(z)\leq\mathbf{H}(z+\varphi(z)\nu)\cdot\frac{-\nabla\varphi(z)+\nu}{\sqrt{1+\vert\nabla\varphi(z)\vert^2}},
    \end{align}
    for $\mcH^n$-a.e. $z\in U$, unless $\eta=\varphi$ on $U$.
\end{theorem}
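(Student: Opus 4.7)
The plan is to follow the standard strategy for strong maximum principles in the varifold setting, which goes back to Solomon--White and has been refined by Simon, White, and Sch\"atzle: localize the problem at the touching point $z_0$, use Allard regularity to promote the rectifiable varifold to a smooth (in fact, $W^{2,p}$) graph near $(z_0,\varphi(z_0))$, and then invoke the classical strong maximum principle for quasilinear elliptic equations on the difference of the two graphs.

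First, I would reformulate the hypotheses geometrically. The graph of $\varphi$ is a (possibly rough) upper envelope for $M$ from one side of $\nu$, whereas the graph of the $W^{2,p}$ function $\eta$ is a sub-barrier with $\eta\le\varphi$ on $U$ and $\eta(z_0)=\varphi(z_0)$. The differential inequality in the hypothesis compares the classical mean curvature of the graph of $\eta$, namely $\mathcal{M}[\eta]=-\operatorname{div}(\nabla\eta/\sqrt{1+|\nabla\eta|^2})$, against the component of $\mathbf{H}$ evaluated at $(z,\varphi(z))$ in the direction of the upward unit normal $\mathbf{n}_\varphi=(-\nabla\varphi+\nu)/\sqrt{1+|\nabla\varphi|^2}$ to the graph of $\varphi$. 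Suppose for contradiction that $\eta\not\equiv\varphi$ on $U$; I would show the coincidence set must then be open and closed in $U$, and hence all of $U$.

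The crucial step is invoking Allard's regularity theorem at the touching point. Because $M$ sits $\nu$-above the graph of $\eta$ and meets it precisely at $(z_0,\eta(z_0))$, while $\mathbf{H}\in L^p$ with $p>\max\{2,n\}$, the density $\Theta^n(\|V\|,(z_0,\varphi(z_0)))$ equals $1$ and a small Allard excess is achieved after rescaling; consequently $M$ coincides, in a neighborhood of $(z_0,\varphi(z_0))$, with the graph of a $C^{1,\alpha}$ function $\psi$ over some ball $B\subset \nu^\perp$. By the lower semicontinuity of $\varphi$ and the normalization $M=\operatorname{spt}(\mathcal{H}^n\llcorner M)$ one gets $\psi=\varphi$ on $B$, and elliptic regularity for the minimal surface operator with $L^p$ right-hand side upgrades $\psi\in W^{2,p}(B)$. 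Now $\psi$ satisfies the graphical mean curvature equation $\mathcal{M}[\psi]=\mathbf{H}\cdot\mathbf{n}_\psi$ pointwise a.e., while the hypothesis says $\mathcal{M}[\eta]\le\mathbf{H}\cdot\mathbf{n}_\psi$ on $B$. Subtracting yields a linear elliptic differential inequality for the nonnegative function $w:=\psi-\eta\in W^{2,p}$ with $w(z_0)=0$; applying the Aleksandrov--Bakelman--Pucci/Krylov--Safonov form of the strong maximum principle (valid for $W^{2,p}$ strong subsolutions) forces $w\equiv 0$ on $B$, i.e.\ $\eta=\varphi$ on $B$.

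The extension from the neighborhood $B$ to all of $U$ is routine: the coincidence set $E=\{z\in U:\eta(z)=\varphi(z)\}$ is closed in $U$ because $\eta$ is continuous and $\varphi$ is lower semicontinuous with $\eta\le\varphi$, and it is open by the argument above applied at any point of $E$. Since $U$ is connected and $E\ni z_0$ is nonempty, $E=U$. The main obstacle in the whole scheme is the Allard step: verifying that the one-sided barrier configuration really does force $\Theta^n=1$ (not merely $\ge 1$) and produces the small tilt/excess needed to apply Allard's theorem, keeping careful track of the $L^p$ norm of the generalized mean curvature through the rescaling. Once this regularity is in place, the strong maximum principle is essentially the classical PDE statement transplanted to the graph representation.
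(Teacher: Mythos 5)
First, note that the paper does not prove \cref{Thm-SMP} at all: it is imported verbatim from Sch\"atzle \cite{Schatzle04}, so there is no internal proof to compare against, and what you are really attempting is a reproof of Sch\"atzle's theorem itself.

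Your scaffolding (lower semicontinuity of $\varphi$ giving closedness of the contact set, openness plus connectedness of $U$, and the final ABP/Krylov--Safonov comparison of two $W^{2,p}$ graphs) is sound, but the pivotal step --- invoking Allard regularity at the touching point --- has a genuine gap, and it is exactly the gap that Sch\"atzle's paper exists to fill. The one-sided barrier only constrains $M$ from below: nothing in the hypotheses prevents several sheets of $M$ from passing through $(z_0,\varphi(z_0))$ tangentially from above (e.g.\ a union of two graphs both lying above ${\rm graph}(\eta)$ and both touching it at $z_0$), in which case $\Theta^n(\Vert V\Vert,(z_0,\varphi(z_0)))\geq 2$ and Allard's theorem is simply not applicable; your inference that ``$M$ meets the graph of $\eta$ precisely at one point, hence the density equals $1$'' does not follow. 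Worse, the theorem is stated for a general weight $\psi$ (the varifold is ${\rm var}(M,\psi)$, not necessarily integral or multiplicity one), so density ratios need not be anywhere near $1$ even at regular points. What blow-up at the touching point does give you --- via the cone maximum principle in the spirit of \cref{Lem-MP1} --- is that every varifold tangent is a (possibly high-multiplicity) plane equal to the tangent plane of ${\rm graph}(\eta)$; but multiplicity $>1$ blocks the passage from ``tangent plane'' to ``$C^{1,\alpha}$ graph''. Sch\"atzle's actual route avoids regularizing $M$ altogether: his quadratic tilt-excess decay estimates show that the lower envelope $\varphi$ itself is pointwise twice differentiable $\mcH^n$-a.e.\ and satisfies the prescribed mean curvature inequality in a viscosity/strong sense, after which the strong maximum principle is run directly on $\varphi-\eta$. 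As written, your argument proves the theorem only under the additional (and unstated) hypothesis that $M$ is a multiplicity-one graph near the touching point, which in the application of this paper would already exclude the singular points one is trying to analyze.
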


\
\section{$C^{1,1}$-rectifiability theorem of $\Gamma_{s;\theta}^t$}\label{Sec-3}
Recall $\Gamma_{s;\theta}^t$ defined in \cref{defn-Gamma_s^t}. The main result of this section is the $C^{1,1}$-rectifiability theorem for this set.
\begin{theorem}\label{Thm-C11Rec}
Given $\theta\in(0,\pi)$.
If $\Omega\subset\overline{\mfR^{n+1}_+}$ is a nonempty, bounded, relatively open set with finite perimeter,
then for every $0<t<\infty$, and a.e. $0<s<t$,
there exists a countable collection $\{\mcU_j\}_{j\geq1}$ of compacts subsets of $\Gamma_{s;\theta}^t$ such that $\mcH^n(\Gamma_{s;\theta}^t\setminus\bigcup_{j=1}^\infty\mcU_j)=0$, with each $\mcU_j$ contained in a $C^{1,1}$-hypersurface in $\mfR^{n+1}$.
 Moreover, denoting by $N_\theta$ the gradient of the shifted distance function $\de_\theta$, then $N_\theta\mid_{\mcU_j}$ is Lipschitz for every $j\geq1$.
\end{theorem}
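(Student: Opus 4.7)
The plan is to adapt the argument of \cite[Theorem 1, Step 1]{DM19} from the classical distance function to the shifted version $\delta_\theta$. There are two substeps: (i) establishing a two-sided touching ball condition at every $y\in\Gamma_{s;\theta}^t$, and (ii) using this condition to prove a Lipschitz bound for $N_\theta$ on suitable compact subsets, from which $C^{1,1}$-rectifiability follows by Whitney extension.

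For (i), fix $y\in\Gamma_{s;\theta}^t$ with associated geodesic $\gamma_y:[0,t]\to\mfR^{n+1}$ and set $x_0:=\gamma_y(0)\in\pr\Om$. The condition $\delta_\theta(\gamma_y(r))=r$ combined with \eqref{eq-dist-theta} forces
\[
|\gamma_y(r)-r\cos\theta\,E_{n+1}-x_0|=r,\qquad r\in[0,t],
\]
so that the shifted centers $\gamma_y(r)-r\cos\theta\,E_{n+1}$ all lie on a common ray from $x_0$ in some unit direction $\nu\in\mfS^n$. Hence $\gamma_y$ is affine, $\gamma_y(r)=x_0+r(\nu+\cos\theta\,E_{n+1})$, and in particular $y=x_0+s(\nu+\cos\theta\,E_{n+1})$. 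The inner ball $B_s(y-s\cos\theta\,E_{n+1})$ and the larger outer ball $B_t(\gamma_y(t)-t\cos\theta\,E_{n+1})$ are both disjoint from $\pr\Om$ and both touch it at the common point $x_0$. A direct perturbation computation, assuming the nearest-point condition is stable under small perturbations of $y$, yields
\[
N_\theta(y)=\nabla\delta_\theta(y)=\frac{\nu}{1+\cos\theta\,(\nu\cdot E_{n+1})},
\]
showing that $y\mapsto\nu$ (and hence $N_\theta$) is single-valued on $\Gamma_{s;\theta}^t$.

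For (ii), given $y_1,y_2\in\Gamma_{s;\theta}^t$ with associated triples $(x_0^i,\nu^i)$, apply the two touching balls. Since $B_t(x_0^i+t\nu^i)\cap\pr\Om=\varnothing$ and $x_0^j\in\pr\Om$, expanding $|x_0^i+t\nu^i-x_0^j|\ge t$ gives
\[
\nu^i\cdot(x_0^i-x_0^j)\ \ge\ -\frac{|x_0^i-x_0^j|^2}{2t},
\]
with an analogous inequality at scale $s$ from the inner balls. Summing the $(i,j)=(1,2)$ and $(2,1)$ versions, and substituting the identity $y_1-y_2=(x_0^1-x_0^2)+s(\nu^1-\nu^2)$ (which uses that the shifts coincide since $\delta_\theta(y_i)=s$), one extracts the geometric bound
\[
s\,|\nu^1-\nu^2|^2\ \le\ C(s,t)\,|y_1-y_2|\,|\nu^1-\nu^2|,
\]
so $|\nu^1-\nu^2|\le C(s,t)\,|y_1-y_2|$. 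Restricting to any compact subset of $\Gamma_{s;\theta}^t$ on which $1+\cos\theta\,(\nu\cdot E_{n+1})$ is bounded below by a positive constant transfers the Lipschitz bound to $N_\theta$ itself.

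Finally, one exhausts $\Gamma_{s;\theta}^t$ by a countable family $\{\mcU_j\}$ of compact subsets on which the denominator $1+\cos\theta\,(\nu\cdot E_{n+1})$ is uniformly controlled; the a.e.\ choice of $s\in(0,t)$ is needed to guarantee, via the coarea formula for the Lipschitz function $\delta_\theta$, that the level set $\pr\Om_s$ carries an $\mcH^n$-meaningful differentiable structure (equivalently, that $\delta_\theta$ is differentiable $\mcH^n$-a.e.\ on $\Gamma_{s;\theta}^t$). Whitney's extension theorem, applied to the Lipschitz field $N_\theta$ on each $\mcU_j$, embeds $\mcU_j$ into a $C^{1,1}$-hypersurface whose normal agrees with $N_\theta$ on $\mcU_j$. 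The main obstacle is bookkeeping the effect of the shift: the "true unit normal" $\nu$ and the gradient $N_\theta$ are no longer parallel, and the Lipschitz constant degenerates as $\nu\cdot E_{n+1}\to-1/\cos\theta$, which forces the compact-subset exhaustion and is the essential novelty compared to \cite[Theorem 1, Step 1]{DM19}.
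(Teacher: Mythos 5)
Your route to the key Lipschitz estimate is genuinely different from the paper's and, once the bookkeeping is fixed, it works. The paper (following \cite[Theorem 1, Step 1]{DM19}) first proves only the quadratic estimate $|N_\theta(y)\cdot(y'-y)|\le C|y-y'|^2$ from the two \emph{mutually tangent} balls of radii $s$ and $t-s$ that sandwich the level set at $y-s\cos\theta E_{n+1}$ (\cref{Prop-Gammast}), obtains $C^1$-graphs via Whitney extension and the implicit function theorem, and only then extracts the Lipschitz bound for $N_\theta$ on compact pieces $\mcU_j$ produced by an Egoroff/density argument. You instead work at the foot point $x_0^i=\xi_\theta(y_i)$ and use the radius-$t$ ball $B_t(x_0^i+t\nu^i)$ disjoint from $\pr\Om$: summing $\nu^i\cdot(x_0^i-x_0^j)\ge-|x_0^i-x_0^j|^2/(2t)$ over $(1,2)$ and $(2,1)$ and substituting $x_0^1-x_0^2=(y_1-y_2)-s(\nu^1-\nu^2)$ gives
\[
\frac{s(t-s)}{t}\,|\nu^1-\nu^2|^2\ \le\ \Bigl|1-\frac{2s}{t}\Bigr|\,|y_1-y_2|\,|\nu^1-\nu^2|+\frac{|y_1-y_2|^2}{t},
\]
which yields $|\nu^1-\nu^2|\le C(s,t)|y_1-y_2|$ on all of $\Gamma_{s;\theta}^t$. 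This is Federer's classical estimate for the projection restricted to points that reach to distance $t$, and it is simpler and global. Note, however, that your displayed intermediate inequality $s|\nu^1-\nu^2|^2\le C|y_1-y_2||\nu^1-\nu^2|$ is not what the computation produces: the term $|x_0^1-x_0^2|^2/t$ contributes both a $|y_1-y_2|^2$ term and an $(s^2/t)|\nu^1-\nu^2|^2$ term, and the latter must be absorbed into the left-hand side (this is why the prefactor is $s(t-s)/t$ and why the constant blows up as $t\downarrow s$). Also, the inner balls give an inequality of the \emph{same} sign and add nothing: the two balls at $x_0$ are nested on the same side of $\pr\Om$; the genuine two-sided tangency occurs on the level set, as in \cref{Prop-Gammast}(3).

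Several of your justifications are incorrect even though the conclusions survive. First, $1+\cos\theta\,(\nu\cdot E_{n+1})\ge1-|\cos\theta|>0$ uniformly because $|\nu|=1$ and $1/|\cos\theta|>1$, so the claimed degeneration as $\nu\cdot E_{n+1}\to-1/\cos\theta$ never occurs (cf.\ \eqref{ineq-mod-Ntheta}); the compact exhaustion is not forced by this, and $\Gamma_{s;\theta}^t$ is itself compact. Second, the restriction to a.e.\ $s$ is not about differentiability ($\de_\theta$ is differentiable at \emph{every} point of $\Gamma_{s;\theta}^t$, for every $s$); it is needed because the coarea formula only gives $\mcH^n(\pr\Om_s)<\infty$ for a.e.\ $s$, which is what makes $\Gamma_{s;\theta}^t$ rectifiable with finite measure. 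Third, the Whitney--Glaser extension requires, besides the Lipschitz bound on $N_\theta$, the compatibility estimate $|\de_\theta(y')-\de_\theta(y)-N_\theta(y)\cdot(y'-y)|\le C|y'-y|^2$; since $\de_\theta\equiv s$ on $\Gamma_{s;\theta}^t$ this is exactly \eqref{eq-DM-3-12}, which does follow from your touching balls but must be stated and used. Finally, the differentiability of $\de_\theta$ at each $y\in\Gamma_{s;\theta}^t$ and the identification of $\hat N_\theta(y)$ with $\nu$ cannot be assumed to be ``stable under small perturbations''; this is the content of \cref{Lem-Unp} and \cref{Lem-u-xi}(3), which your argument must invoke or reprove.
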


The following fine properties of $\Gamma_{s;\theta}^t$ are crucial for proving the rectifiability result.
\subsection{Fine properties of $\Gamma_{s;\theta}^t$}
Given $\theta\in(0,\pi)$, for any nonempty, bounded, relatively open set with finite perimeter $\Omega\subset\overline{\mfR^{n+1}_+}$, we define $\Gamma_{s;\theta}^t$ and $\Gamma_{s;\theta}^+$ as in \cref{defn-Gamma_s^t}.
Recall that for any $y\in\mfR^{n+1}$, the shifted distance function from $\pr\Om$ is defined as $\de_\theta(y)=\sup_{r\geq0}\{r:B_r(y-r\cos\theta E_{n+1})\cap \pr\Om=\varnothing\}$.
Following \cite[Definition 4.1]{Fed59}, we define the shifted unique point projection mapping
as follows.
\begin{definition}
    \normalfont
Given $\theta\in(0,\pi)$, for any nonempty, bounded, relatively open set $\Omega\subset\overline{\mfR^{n+1}_+}$, let ${\rm Unp}_\theta(\pr\Om)$ be the set of points $y\in\mfR^{n+1}$ for which there exists a unique point of $\pr\Om$ nearest to $y$ with respect to the shifted distance function $\de_\theta$, and the map
\begin{align}\label{defn-unp}
    \xi_\theta:{\rm Unp}_\theta(\pr\Om)\ra \pr\Om
\end{align}
associates with $y\in{\rm Unp}_\theta(\pr\Om)$ the unique $x\in \pr\Om$ such that $\de_\theta(y)={\rm dist}(x,y-\de_\theta(y)\cos\theta E_{n+1})$.
\end{definition}
Our first observation is that $\Gamma_{s;\theta}^t\subset{\rm Unp}_\theta(\pr\Om)$, we refer to \cref{figure3} for illustration.
	\begin{figure}[h]
	\centering
	\includegraphics[height=8cm,width=13cm]{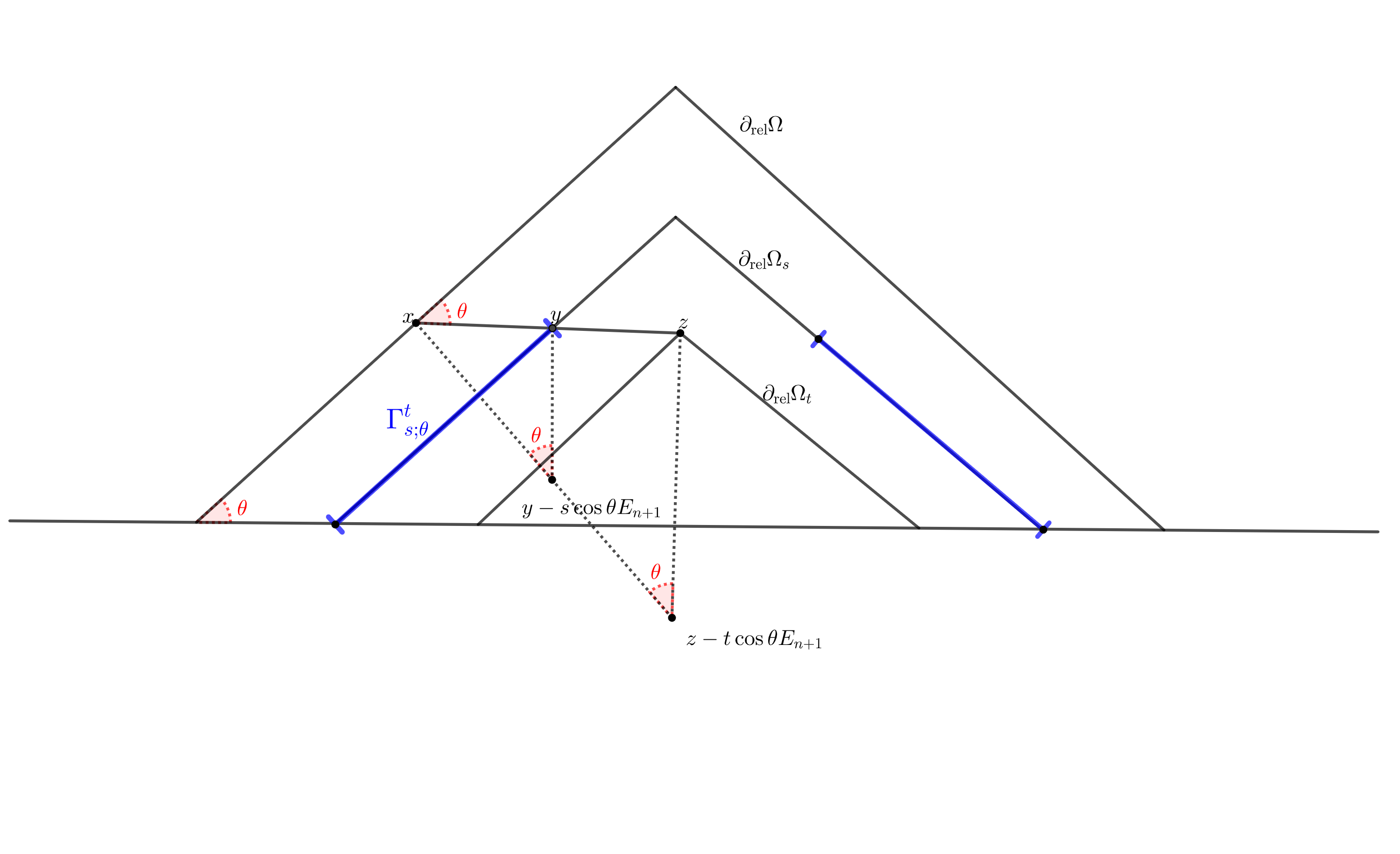}
	\caption{$\Gamma_{s;\theta}^t$}
	\label{figure3}
\end{figure}

\begin{lemma}\label{Lem-Unp}
    Given $\theta\in(0,\pi)$, for any nonempty, bounded, relatively open set with finite perimeter $\Omega\subset\overline{\mfR^{n+1}_+}$ and for every $0<s<t<\infty$, let
    $\Gamma_{s;\theta}^t$ be as in \cref{defn-Gamma_s^t}.    
    Then, for any $y\in\Gamma_{s;\theta}^t$, it has a unique point projection onto $\pr\Om$ with respect to $\de_\theta$, which reads as $\xi_\theta(y)=x$; in other words, $\Gamma_{s;\theta}^t\subset{\rm Unp}_\theta(\pr\Om)$.
\end{lemma}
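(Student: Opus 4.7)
The plan is to exploit the geodesic structure and identify $x := \gamma_y(0)$ as the unique point of $\pr\Om$ realizing $\de_\theta(y) = s$. I would split the argument into three steps: $x \in \pr\Om$; $x$ is a nearest point; and $x$ is the unique such nearest point.

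For the first step, I would deduce $x \in \pr\Om$ from $\de_\theta(\gamma_y(0)) = 0$: if $x$ were outside $\pr\Om$, closedness of $\pr\Om$ gives $\de(x) > 0$, and continuity of $\de$ then ensures that $B_r(x - r\cos\theta E_{n+1})$ avoids $\pr\Om$ for all sufficiently small $r > 0$, forcing $\de_\theta(x) > 0$, a contradiction. For the second step, I would parameterize $\gamma_y(r) = x + r\mathbf{v}$ and introduce its shift $\ell(r) := \gamma_y(r) - r\cos\theta E_{n+1} = x + r(\mathbf{v} - \cos\theta E_{n+1})$; by identity \eqref{eq-dist-theta}, $\de(\ell(r)) = r$ for every $r \in [0, t]$. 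The $1$-Lipschitz property of $\de$ applied to $\ell$ gives $|\mathbf{v} - \cos\theta E_{n+1}| \geq 1$, while the matching upper bound (the main obstacle) must be extracted from interpreting ``geodesic'' in the paper's sense as the curve along which $\de_\theta$ grows at its maximal rate from a base point on $\pr\Om$, consistently with the smooth flow $\zeta_\theta$ of the introduction. Writing $\mathbf{v} - \cos\theta E_{n+1} = -\nu$ for the resulting unit vector $\nu$, one gets $\ell(r) = x - r\nu$ and hence $|x - \ell(r)| = r$ for every $r \in [0, t]$; specializing at $r = s$ yields $|x - (y - s\cos\theta E_{n+1})| = s$, which together with $B_s(y - s\cos\theta E_{n+1}) \cap \pr\Om = \varnothing$ (from $\de_\theta(y) = s$) identifies $x$ as a nearest point of $\pr\Om$ to $y$ with respect to $\de_\theta$.

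For the uniqueness step, I would exploit the extension of the geodesic to parameter $r = t > s$. The larger shifted ball $B_t(\ell(t)) = B_t(x - t\nu)$ still avoids $\pr\Om$ (since $\de_\theta(\gamma_y(t)) = t$) and passes through $x$ on its boundary; moreover, since the centers $\ell(s), \ell(t)$ lie on the ray from $x$ in direction $-\nu$ with $|\ell(s) - \ell(t)| = t - s$ exactly matching the difference of radii, the closed balls are internally tangent: $\overline{B_s(\ell(s))} \subset \overline{B_t(\ell(t))}$ with the two bounding spheres meeting only at $x$. Any competitor $x' \in \pr\Om$ satisfying $|x' - \ell(s)| = s$ then lies in $\overline{B_s(\ell(s))} \subset \overline{B_t(\ell(t))}$, while $x' \in \pr\Om$ combined with $B_t(\ell(t)) \cap \pr\Om = \varnothing$ forces $x' \in \p B_t(\ell(t))$; hence $x' \in \p B_s(\ell(s)) \cap \p B_t(\ell(t)) = \{x\}$, so $x' = x$, concluding the argument.
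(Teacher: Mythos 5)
Your proof is correct and follows essentially the same route as the paper: identify $x=\gamma_y(0)\in\pr\Om$, show that the shifted curve $\ell(r)=\gamma_y(r)-r\cos\theta E_{n+1}$ is a unit-speed segment so that $\vert x-(y-s\cos\theta E_{n+1})\vert=s$, and rule out a second nearest point using the larger empty ball $B_t(\ell(t))$ — your internal-tangency argument is just the geometric form of the paper's strict triangle inequality. The one step you flag as the main obstacle, the upper bound $\vert\mathbf{v}-\cos\theta E_{n+1}\vert\le 1$, is treated no more carefully in the paper, which simply asserts that $\de(\ell(r))=r$ ``means'' $\ell$ is unit-speed; as you correctly sense, only the lower bound follows from the Lipschitz property, and the upper bound really rests on reading \cref{defn-Gamma_s^t} in the Delgadino--Maggi sense, i.e.\ with the distance identities $\vert x-\ell(s)\vert=s$ and $\vert\ell(s)-\ell(t)\vert=t-s$ built into the definition.
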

\begin{proof}
By definition, for any $y\in \Gamma_{s;\theta}^{t}$,
there exists $x=\gamma_y(0)\in\pr\Om$, $z=\gamma_y(t)\in\pr\Om_{t}$.
Consider the geodesic $\gamma_y(r)-r\cos\theta E_{n+1}$ defined on $r\in[0,t]$,
by \eqref{eq-dist-theta} there holds
\begin{align*}
    \de(\gamma_y(r)-r\cos\theta E_{n+1})
    =\de_\theta(\gamma_y(r))
    =r\quad\forall r\in[0,t],
\end{align*}
which means $\gamma_y(r)-r\cos\theta E_{n+1}$ is a unit-speed line segment, and it is easy to see the following claim holds.

{\bf Claim. }$\vert x-(y-s\cos\theta E_{n+1})\vert=s, \vert y-s\cos\theta E_{n+1}-(z-t\cos\theta E_{n+1})\vert=t-s$.

Now we show that $x$ is uniquely determined by $y$. Since $y\in\pr\Omega_s$, if there exists $x'\neq x\in\pr\Om$ such that $\vert x'-(y-s\cos\theta E_{n+1})\vert=s$,
then by the triangle inequality
we have
	\begin{align*}
		\vert x'-(z-t\cos\theta E_{n+1})\vert< \vert x'-(y-s\cos\theta E_{n+1})\vert+\vert y-z+(t-s)\cos\theta E_{n+1}\vert=s+t-s=t,
	\end{align*}
	contradicts to the fact that $z\in\pr\Omega_t$.
 Therefore, we have showed that $x$ is the unique point of $\pr\Om$ nearest to $y$ with respect to $\de_\theta$; that is, $\xi_\theta(y)=x$.
\end{proof}
Once $0<s<t<\infty$ and $y\in\Gamma_{s;\theta}^t$ are fixed, it follows from \cref{Lem-Unp} that the geodesic (constant-speed line segment) $\gamma_y:[0,t]\ra\mfR^{n+1}$ as in \cref{defn-Gamma_s^t} is unique.
Moreover, for the geodesic $\gamma_y(r)-r\cos\theta E_{n+1}$ defined on $r\in[0,t]$, we see from the proof of \cref{Lem-Unp} that any point on it has a unique point projection onto $\pr\Om$ with respect to the distance function $\de$, which reads as $$\xi(\gamma_y(r)-r\cos\theta E_{n+1})=x,\quad\forall r\in[0,t).$$

Equipped with the fact that $\Gamma_{s;\theta}^t\subset{\rm Unp}_\theta(\pr\Om)$, we can explore the shifted distance function $\de_\theta$ on $\mfR^{n+1}$ and the unique point projection mapping $\xi_\theta$ on $\Gamma_{s;\theta}^t$. We collect the fine properties of $\delta_\theta$ and $\xi_\theta$ as follows.
\begin{lemma}\label{Lem-u-xi}
	Given $\theta\in(0,\pi)$, for any nonempty, bounded, relatively open set with finite perimeter $\Om\subset\overline{\mfR^{n+1}_+}$, the following statements hold:
	\begin{enumerate}
		\item \label{Lem-u-xi-item1}
		$\de_\theta$ is a Lipschitz function on $\mfR^{n+1}$ with Lipschitz constant at most $\frac{1}{1-\vert\cos\theta\vert}$, i.e., for any $x,y\in \mfR^{n+1}$,
		\begin{align*}
			\vert \de_\theta(y)-\de_\theta(x)\vert\leq\frac{1}{1-\vert\cos\theta\vert}\vert x-y\vert.
		\end{align*}
		\item
		\label{Lem-u-xi-item2}
		For $0\leq r_1<r_2$, and for any $y\in\mfR^{n+1}$, the strictly inclusion holds:
		\begin{align*}
		    B_{r_1}(y-r_1\cos\theta E_{n+1})\subset B_{r_2}(y-r_2\cos\theta E_{n+1}).
		\end{align*}
		\item
		\label{Lem-u-xi-item3}
		$N_\theta(y):=\nabla \de_\theta(y)$ exists for $\mcL^{n+1}$-a.e. $y\in\mfR^{n+1}$. Moreover, when it exists, there holds
		\begin{align}\label{ineq-mod-Ntheta}
		    \frac{1}{1+\vert\cos\theta\vert}\leq\vert N_\theta(y)\vert\leq\frac{1}{1-\vert\cos\theta\vert}.
		\end{align}
		In particular, as $\theta=\pi/2$, $\vert N_\theta(y)\vert=1$ when it exists. 
		\item \label{Lem-u-xi-item4}
		For $0<s<t$,
		$\xi_\theta$ is continuous on $\Gamma_{s;\theta}^t$.
	\end{enumerate}
\end{lemma}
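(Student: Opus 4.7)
Both parts follow from direct triangle-inequality estimates on the shifted ball $B_r(y - r\cos\theta E_{n+1})$ combined with the supremum definition of $\de_\theta$. For the Lipschitz estimate (1), given $x, y$ with (without loss of generality) $r := \de_\theta(x) \geq \de_\theta(y)$, I would set $r' := r - |x-y|/(1-|\cos\theta|)$ (the case $r' \le 0$ being trivial) and verify, for any $z\in B_{r'}(y - r'\cos\theta E_{n+1})$,
\begin{align*}
|z - (x - r\cos\theta E_{n+1})|
& \leq |z - (y - r'\cos\theta E_{n+1})| + |x - y| + (r-r')|\cos\theta| \\
& < r' + |x - y| + (r-r')|\cos\theta| = r,
\end{align*}
which places $B_{r'}(y - r'\cos\theta E_{n+1})$ inside $B_r(x - r\cos\theta E_{n+1})$ and hence disjoint from $\pr\Om$, giving $\de_\theta(y) \geq r'$. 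For (2), I would similarly bound
\[
|z - (y - r_2\cos\theta E_{n+1})| \leq |z - (y - r_1\cos\theta E_{n+1})| + (r_2 - r_1)|\cos\theta| < r_1 + (r_2 - r_1)|\cos\theta| \leq r_2,
\]
with the final inequality strict precisely because $\theta \in (0,\pi)$ gives $|\cos\theta| < 1$.

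\textbf{Plan for (3).} The existence of $N_\theta$ for $\mcL^{n+1}$-a.e.\ $y$ is immediate from Rademacher's theorem applied to the Lipschitz function from (1), and that same bound yields the upper estimate $|N_\theta| \leq 1/(1-|\cos\theta|)$. For the lower bound, at a differentiability point $y$ with $r := \de_\theta(y) > 0$, I would use the supremum characterization together with the compactness of $\pr\Om$ to extract a touching point $x \in \pr\Om$ with $|x - (y - r\cos\theta E_{n+1})| = r$. Setting $w := (y - r\cos\theta E_{n+1} - x)/r$ (a unit vector) and introducing the \emph{capillary ray}
\[
\gamma(s) = x + sw + s\cos\theta E_{n+1},\qquad s \in [0,r],
\]
I would establish the key identity $\de_\theta(\gamma(s)) = s$ on $[0,r]$: the inequality $\de_\theta(\gamma(s))\geq s$ follows since $B_s(\gamma(s) - s\cos\theta E_{n+1}) = B_s(x + sw)$ lies inside $B_r(y - r\cos\theta E_{n+1})$ and is therefore disjoint from $\pr\Om$, while $\de_\theta(\gamma(s))\leq s$ uses the same triangle bound as in (2) to show that for any $r' > s$ the center $\gamma(s) - r'\cos\theta E_{n+1}$ lies within distance $s + (r'-s)|\cos\theta| < r'$ of $x$, forcing $x \in B_{r'}(\gamma(s) - r'\cos\theta E_{n+1}) \cap \pr\Om$. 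Noting $\gamma(r) = y$ and differentiating $s \mapsto \de_\theta(\gamma(s)) = s$ at $s = r$ (which needs only the differentiability of $\de_\theta$ at $y$) yields $N_\theta(y) \cdot (w + \cos\theta E_{n+1}) = 1$; Cauchy--Schwarz together with $|w + \cos\theta E_{n+1}| \leq 1 + |\cos\theta|$ then gives $|N_\theta(y)| \geq 1/(1+|\cos\theta|)$, as required.

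\textbf{Plan for (4) and main obstacle.} For the continuity of $\xi_\theta$ on $\Gamma_{s;\theta}^t$, I would argue by compactness plus uniqueness: given $y_k \to y$ in $\Gamma_{s;\theta}^t$, the points $x_k := \xi_\theta(y_k) \in \pr\Om$ admit a convergent subsequence $x_{k_j} \to x_\infty \in \pr\Om$; passing to the limit in $|x_{k_j} - (y_{k_j} - \de_\theta(y_{k_j})\cos\theta E_{n+1})| = \de_\theta(y_{k_j})$ using the continuity of $\de_\theta$ from (1), and invoking the uniqueness assertion of \cref{Lem-Unp}, forces $x_\infty = \xi_\theta(y)$; as every subsequence yields the same limit, the entire sequence converges. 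The main technical obstacle is the lower bound in (3): the decisive geometric input is that the strict inequality $|\cos\theta| < 1$ lets the shifted-ball offset $(r'-s)|\cos\theta|$ be absorbed into the radius increment $r'-s$, which is exactly what compels $\de_\theta$ to grow at rate at least $1/(1+|\cos\theta|)$ along the ray $\gamma$. An alternative route via implicit differentiation of $\de_\theta(y) = \de(y - \de_\theta(y)\cos\theta E_{n+1})$ produces the same formula, but introduces measure-theoretic subtleties in tracking where the standard distance $\de$ is differentiable under the Lipschitz map $y \mapsto y - \de_\theta(y)\cos\theta E_{n+1}$; the capillary-ray argument sidesteps this issue entirely.
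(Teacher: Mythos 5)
Your proposal is correct and follows essentially the same route as the paper's proof: parts (1), (2) and (4) coincide with the paper's triangle-inequality and compactness-plus-uniqueness arguments, and your capillary-ray identity $\de_\theta(\gamma(s))=s$ is precisely the paper's Claim 2 (that $\de_\theta(y+t(x-y))=(1-t)\de_\theta(y)$) reparametrized, combined with the same first-order expansion at $y$. The only divergence is in (3), where the paper additionally shows that $N_\theta(y)$ is parallel to $x-(y-\de_\theta(y)\cos\theta E_{n+1})$ and thereby derives the exact identity $\vert N_\theta(y)\vert=\left(1+\cos\theta\,\hat N_\theta(y)\cdot E_{n+1}\right)^{-1}$, which it reuses later in the $C^{1,1}$-rectifiability argument, whereas your Cauchy--Schwarz step delivers only the two-sided bound --- which is all the lemma as stated requires.
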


\begin{proof}
	{\bf(1)} We fix any $x,y\in\mfR^{n+1}$. Since $\pr\Om$ is compact in $\mfR^{n+1}$, we can take $a\in\pr\Om$ such that $\de_\theta(x)
	=\vert a-(x-\de_\theta(x)\cos\theta E_{n+1})\vert$. We assume without loss of generality that $\de_\theta(y)\geq \de_\theta(x)$, then by the triangle inequality, we find
	\begin{align*}
		\vert\de_\theta(y)-\de_\theta(x)\vert
		=&\de_\theta(y)-\de_\theta(x)\notag\\
		\leq&\vert a-(y-\de_\theta(y)\cos\theta E_{n+1})\vert-\vert a-(x-\de_\theta(x)\cos\theta E_{n+1})\vert\notag\\
		\leq&\vert x-y+(\de_\theta(y)-\de_\theta(x))\cos\theta E_{n+1}\vert\notag\\
		\leq&\vert x-y\vert+\vert \de_\theta(y)-\de_\theta(x)\vert\vert \cos\theta\vert,
	\end{align*}
	we rearrange this to see
	\begin{align}
	    \vert \de_\theta(y)-\de_\theta(x)\vert\leq\frac{1}{1-\vert\cos\theta\vert}\vert x-y\vert.
	\end{align}
	This completes the proof of {\bf(1)}.
	
	\noindent{\bf(2)} This amounts to be a simple observation due to the triangle inequality.
	Indeed, for any $z\in\p B_{r_1}(y-r_1\cos\theta E_{n+1})$, if $\vert z-(y-r_2\cos\theta E_{n+1})\vert\geq r_2$, then we have
 \begin{align*}
	    {\rm dist}(z,y-r_1\cos\theta E_{n+1})+{\rm dist}(y-r_1\cos\theta E_{n+1},y-r_2\cos\theta E_{n+1})\geq{\rm dist}(z,y-r_2\cos\theta E_{n+1}),
	\end{align*}
	namely,
	\begin{align*}
	    r_1+(r_2-r_1)\vert\cos\theta\vert\geq r_2,
	\end{align*}
	which leads to a contradiction since $\theta\in(0,\pi)$ and completes the proof of {\bf (2)}.
	
	\noindent{\bf (3)}
	By virtue of the Rademacher's theorem, $\de_\theta$ is differentiable at $\mcL^{n+1}$-a.e. $y\in\mfR^{n+1}$. When $N_\theta(y)$ exists, we may assume that there exists a unique $x\in\pr\Om$, such that $\de_\theta(y)=\vert x-(y-\de_\theta(y)\cos\theta E_{n+1})\vert$.
	
	{\bf Claim 1. }$\nabla \de_\theta(y)\parallel x-(y-\de_\theta(y)\cos\theta E_{n+1}).$
	
	Indeed, thanks to \eqref{eq-dist-theta},
	we have the following relation of distance and shifted distance:
	\begin{align}\label{eq-delta&delta-theta}
	    \delta(y-\delta_\theta(y)\cos\theta E_{n+1})=\delta_\theta(y).
	\end{align}
	Notice that by virtue of \cite[Theorem 4.8 (3)]{Fed59}, $\left(\nabla\delta\right)(y-\delta_\theta(y)\cos\theta E_{n+1})=\frac{x-(y-\delta_\theta(y)\cos\theta E_{n+1})}{\delta_\theta(y)}$.
	Differentiating both sides of \eqref{eq-delta&delta-theta}, we get the claim.
	
	{\bf Claim 2. }For any $0< t\leq1$, $\de_\theta\left(y+t(x-y)\right)=(1-t)\de_\theta(y)$.
	
	Observe that $y+t(x-y)-(1-t)\de_\theta(y)\cos\theta E_{n+1}=y-\de_\theta(y)\cos\theta E_{n+1}+t\left(x-(y-\de_\theta(y)\cos\theta E_{n+1})\right)$, and trivially we have $\vert x-{\rm RHS}\vert=(1-t)\de_\theta(y)$, since ${\rm dist}(x,y-\de_\theta(y)\cos\theta E_{n+1})=\de_\theta(y)$. Therefore, we have: $\de_\theta(y+t(x-y))\leq(1-t)\de_\theta(y)$.
	
	On the other hand, since $1/(1-t)>1$, we know that $$\bar B_{(1-t)\de_\theta(y)}\left(y+t(x-y)-(1-t)\de_\theta(y)\cos\theta E_{n+1}\right)\subset \bar B_{\de_\theta(y)}\left(y-\de_\theta(y)\cos\theta E_{n+1}\right)$$ with the only common point $x$, and it follows from {\bf(2)} that for any $r<(1-t)\de_\theta(y)$, $B_r(y+t(x-y)-r\cos\theta E_{n+1})\cap\pr\Om=\varnothing$, which implies $\de_\theta(y+t(x-y))=(1-t)\de_\theta(y)$ and proves the claim.

    Now we are ready to carry out the proof of \eqref{ineq-mod-Ntheta}, we abbreviate $N_\theta(y)/\vert N_\theta(y)\vert$ by $\hat N_\theta(y)$.
    
    By {\bf Claim 1} we know that $x-(y-\de_\theta(y)\cos\theta E_{n+1})=-\de_\theta(y)\hat N_\theta(y)$, and hence $x-y=-\de_\theta(y)(\hat N_\theta(y)+\cos\theta E_{n+1})$.
    Notice that the Taylor expansion of $\de_\theta(y+t(x-y))$ at $y$ gives
    \begin{align*}
        \de_\theta(y+t(x-y))=\de_\theta(y)+tN_\theta(y)\cdot \left(x-y\right)+o(t),
    \end{align*}
    which, by {\bf Claim 2} and the above observation, reads as
    \begin{align*}
        -t\de_\theta(y)=tN_\theta(y)\cdot\left(-\de_\theta(y)\left(\hat N_\theta(y)+\cos\theta E_{n+1}\right)\right)+o(t),
    \end{align*}
    sending $t\searrow0^+$ and rearranging, we thus find
    \begin{align}\label{eq-mod-Ntheta}
        \vert N_\theta(y)\vert=\frac{1}{1+\cos\theta\hat N_\theta(y)\cdot E_{n+1}},
    \end{align}
   \eqref{ineq-mod-Ntheta} follows easily.
    
	For {\bf(4)}, suppose on the contrary that there exists some $\ep>0$ and a sequence of points $\{y_i\}_{i=1}^\infty\subset\Gamma_{s;\theta}^t$,
	converges to $y\in\Gamma_{s;\theta}^t$, such that $\vert \xi_\theta(y)-\xi_\theta(y_i)\vert\geq\ep$ for every large $i$.
	
    By definition, for each $i$, we have, for $i$ large, there holds
	\begin{align}
	   \vert\xi_\theta(y_i)-(y_i-s\cos\theta E_{n+1})\vert =\de_\theta(y_i)
	    =s.
	\end{align}
	Using the triangle inequality and the fact that $y_i$ converges to $y$, we find
	\begin{align*}
		\vert \xi_\theta(y_i)-(y-s\cos\theta E_{n+1})\vert
		\leq&
		\vert\xi_\theta(y_i)-(y_i-s\cos\theta E_{n+1})\vert+\vert y_i-y\vert\notag\\
		=&s+\vert y_i-y\vert<s+\ep.
	\end{align*}
	This means, all the points $\{\xi_\theta(y_i)\}_i$ are lying in $\pr\Om\cap B_{s+\ep}(y-s\cos\theta E_{n+1})$, which is a bounded subset of the compact set $\pr\Om$, and hence by passing to a subsequence, we can assume that $\{\xi_\theta(y_i)\}_i$ converges to some point $x\in\pr\Om$. But then, since $\de_\theta$ is continuous, we have
	\begin{align*}
		s=
		\de_\theta(y)
  =\lim_{i\ra\infty}\de_\theta(y_i)
  =\lim_{i\ra\infty}\vert\xi_\theta(y_i)-(y_i-s\cos\theta E_{n+1})\vert
		=\vert x-(y-s\cos\theta E_{n+1})\vert,
	\end{align*} 
	which implies that $x=\xi_\theta(y)$ since we have proved that $y\in\Gamma_{s;\theta}^t\subset{\rm Unp}_\theta(\pr\Om)$ in \cref{Lem-Unp}. However, this contradicts to the assumption that
	\begin{align*}
		\vert x-\xi_\theta(y)\vert
  =\lim_{i\ra\infty}\vert\xi_\theta(y_i)-\xi_\theta(y)\vert\geq\ep,
	\end{align*}
	and hence completes the proof.
\end{proof}

\begin{remark}
	\normalfont
	When $\Om$ is contained in a Euclidean space, similar results are included in \cite[4.8(1)(2)(3)(4)]{Fed59}.
\end{remark}

With the help of \cref{Lem-Unp} and \cref{Lem-u-xi}, we can fully explore the fine properties of $\Gamma_{s;\theta}^t$ and $\Gamma_{s;\theta}^+$, which are well understood in the Euclidean case, see \cite[Theorem1]{DM19}.

\begin{proposition}\label{Prop-Gammast}
	Given $\theta\in(0,\pi)$.
 For any nonempty, bounded, relatively open set with finite perimeter $\Om\subset\overline{\mfR^{n+1}_+}$ and for every $0<s<t<\infty$,
 there holds
	\begin{enumerate}
		\item \label{prop3.1-item2}For $s<t_1<t_2$, $\Gamma_{s;\theta}^{t_2}\subset\Gamma_{s;\theta}^{t_1}$. In particular, $\Gamma_{s;\theta}^+=\lim_{t\ra s^+}\Gamma_{s;\theta}^t$.
		{\color{black}\item \label{prop3.1-item3}$\Gamma_{s;\theta}^t$ is a compact set in $\overline{\mfR^{n+1}_+}$.}
		\item \label{prop3.1-item4}At any $y\in\Gamma_{s;\theta}^t$, $\Gamma_{s;\theta}^t$ is bounded by two mutually tangent balls in $\mfR^{n+1}$ with radii $s$ and $t-s$.
		\item \label{prop3.1-item5}$\de_\theta$ is differentiable at every $y\in\Gamma_{s;\theta}^t$.
	\end{enumerate}
\end{proposition}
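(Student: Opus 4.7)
The plan is to handle the four items in order; (1) and (2) follow quickly from the definition and the fine properties of $\delta_\theta$ already established in \cref{Lem-Unp} and \cref{Lem-u-xi}, whereas (3) requires the main geometric construction of two mutually tangent balls at $y$, which then yields (4) essentially for free.

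For (1), given $y\in\Gamma_{s;\theta}^{t_2}$ witnessed by a geodesic $\gamma_y:[0,t_2]\to\mfR^{n+1}$, the restriction $\gamma_y|_{[0,t_1]}$ witnesses $y\in\Gamma_{s;\theta}^{t_1}$; the ``in particular'' part then follows because $\{\Gamma_{s;\theta}^t\}_{t>s}$ is an increasing family as $t\searrow s^+$, with union $\Gamma_{s;\theta}^+$ by definition. For (2), $\Gamma_{s;\theta}^t\subset\overline{\Om}$ gives boundedness automatically. For closedness, take $y_i\to y$ with $y_i\in\Gamma_{s;\theta}^t$; by the proof of \cref{Lem-Unp}, each witness geodesic has the form $\gamma_{y_i}(r)=x_i+r(v_i+\cos\theta E_{n+1})$ with $x_i=\xi_\theta(y_i)\in\pr\Om$ and $v_i\in\mathbf{S}^n$. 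Compactness of $\pr\Om\times\mathbf{S}^n$ yields a subsequential limit $(x_i,v_i)\to(x,v)$, so the limit curve $\gamma(r):=x+r(v+\cos\theta E_{n+1})$ satisfies $\gamma(s)=y$ together with $\delta_\theta(\gamma(r))=r$ for every $r\in[0,t]$ (passing to the limit via continuity of $\delta_\theta$ from \cref{Lem-u-xi}), hence $y\in\Gamma_{s;\theta}^t$.

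For (3), the heart of the proof, write the witness geodesic as $\gamma_y(r)=x+r(\omega+\cos\theta E_{n+1})$ with $x=\xi_\theta(y)\in\pr\Om$ and $\omega\in\mathbf{S}^n$, and set $z:=\gamma_y(t)$, so that $y-s\cos\theta E_{n+1}=x+s\omega$ and $z-t\cos\theta E_{n+1}=x+t\omega$. Define
\[
B_1:=B_s(y-s\omega)=B_s(x+s\cos\theta E_{n+1}),\qquad B_2:=B_{t-s}(y+(t-s)\omega).
\]
A direct computation shows both balls pass through $y$ with opposite outward unit normals $\omega$ and $-\omega$, so they are mutually tangent at $y$ to the hyperplane $\omega^\perp+y$, with radii $s$ and $t-s$ respectively. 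The assertion reduces to $\Gamma_{s;\theta}^t\cap(B_1\cup B_2)=\varnothing$. For $y'\in B_1$, $|(y'-s\cos\theta E_{n+1})-x|<s$, so $x\in B_s(y'-s\cos\theta E_{n+1})\cap\pr\Om$, forcing $\delta_\theta(y')<s$. For $y'\in B_2$, writing $c_2:=y+(t-s)\omega$, we get $|(y'-s\cos\theta E_{n+1})-(x+t\omega)|=|y'-c_2|<t-s$, hence the strict inclusion $\overline{B}_s(y'-s\cos\theta E_{n+1})\subset B_t(x+t\omega)=B_t(z-t\cos\theta E_{n+1})$, and the latter is disjoint from $\pr\Om$ because $\delta_\theta(z)=t$. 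The strictness allows us to replace $s$ by $s+\ep$ for small $\ep>0$: the shifted center drifts by at most $\ep|\cos\theta|$ while the target radius shrinks by exactly $\ep$, so the inclusion persists whenever $\ep(1+|\cos\theta|)\le t-s-|y'-c_2|$, giving $\delta_\theta(y')\ge s+\ep>s$.

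Item (4) then follows: the sandwich from (3) pins down $\omega^\perp+y$ as the unique tangent hyperplane to the level set $\{\delta_\theta=s\}$ at $y$. Coupled with the geodesic relation $\delta_\theta(\gamma_y(s+\tau))=s+\tau$, which identifies the directional derivative of $\delta_\theta$ along $\omega+\cos\theta E_{n+1}$ as $1$, and the global Lipschitz bound from \cref{Lem-u-xi}, this yields differentiability of $\delta_\theta$ at $y$ with $N_\theta(y)=\omega/(1+\cos\theta\,\omega\cdot E_{n+1})$, consistent with \eqref{eq-mod-Ntheta}. I expect the main obstacle to be the construction and verification of $B_2$: the $\cos\theta$-drift of the shifted-ball centers forces the quantitative expansion argument above, in contrast with the purely Euclidean Delgadino--Maggi setting where no such drift appears and one simply takes $B_1=B_s(x)$, $B_2=B_{t-s}(z)$.
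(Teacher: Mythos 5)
Your proof is correct and follows essentially the same route as the paper: (1) by restriction of the witnessing geodesic, (2) by a compactness/limit argument on the witnessing data, (3) via exactly the two tangent balls the paper uses (your $B_1=B_s(x+s\cos\theta E_{n+1})$ and $B_2=B_{t-s}(z-(t-s)\cos\theta E_{n+1})$ are the translates by $s\cos\theta E_{n+1}$ of the paper's $B_s(x)$ and $B_{t-s}(z-t\cos\theta E_{n+1})$), and (4) from the resulting sandwich together with the geodesic relation. Your quantitative handling of the $\cos\theta$-drift in the $B_2$ exclusion, and your direct subsequence extraction in (2), are if anything slightly more explicit than the paper's argument, which leans on the continuity of $\xi_\theta$ and the equality case of the triangle inequality.
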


\begin{proof}
	\noindent\textbf{(1)}
	The first part of the statement follows easily from the definition of $\Gamma_{s;\theta}^t$, while by virtue of the inclusion, it is apparent that $\Gamma_{s;\theta}^+=\lim_{t\ra s^+}\Gamma_{s;\theta}^t$.
	\vspace{0.3cm}
	
	\noindent{\textbf{(2)}} It suffice to prove that $\Gamma_{s;\theta}^t$ is a closed set, i.e., if a sequence of points $\{y_i\}_{i=1}^\infty\subset\Gamma_{s;\theta}^t$ converges to $y$, then it must be that $y\in\Gamma_{s;\theta}^t$.
	
	By definition of $\Gamma_{s;\theta}^t$, for each $y_i$, there exists corresponding points $x_i\in\pr\Om, z_i\in\pr\Om_t$. By \cref{Lem-u-xi}, $\xi_\theta$ is continuous on $\Gamma_s^t$, and hence we have: $\{x_i\}_{i=1}^\infty$ is a Cauchy sequence 
 in $\pr\Om$. Notice that $\pr\Om$ is closed in $\mfR^{n+1}$, and hence $\{x_i\}_{i=1}^\infty$ converges to some $x\in\pr\Om$. Similarly, $\{z_i\}_{i=1}^\infty$ converges to some $z\in\pr\Om_t$. 
	
	By continuity, we have
	\begin{align*}
		\de_\theta(y)=\lim_{i\ra\infty}\de_\theta(y_i)=s,\quad
  \vert x-(y-s\cos\theta E_{n+1})\vert=\lim_{i\ra\infty}\vert \xi_\theta(y_i)-(y_i-s\cos\theta E_{n+1})\vert=s.
	\end{align*}
 Similarly, we deduce that $\vert y-s\cos\theta E_{n+1}-(z-t\cos\theta E_{n+1})\vert=t-s$.
By using the triangle inequality again, we find
	\begin{align*}
		t=\de_\theta(z)
		\leq&\vert x-(z-t\cos\theta E_{n+1})\vert\notag\\
		\leq&\vert x-(y-s\cos\theta E_{n+1})\vert+\vert y-z+(t-s)\cos\theta E_{n+1}\vert=t,
	\end{align*}
	it is easy to see that the line segment joining $x$ and $z-t\cos\theta E_{n+1}$ must contain $y-s\cos\theta E_{n+1}$, and hence one may verify that $y\in\Gamma_{s;\theta}^t$ by definition, which completes the proof of ({\bf 2}).
	\vspace{0.3cm}
	
	\noindent\textbf{(3)} can be deduced from the definition of $\Gamma_{s;\theta}^t$ and \cref{Lem-Unp}.
 Indeed, it is easy to see that for every $y\in\Gamma_{s;\theta}^t$,
		\begin{align}\label{eq-mutuallytangentballs}
			\{y-s\cos\theta E_{n+1}\}=\p B_{t-s}(z-t\cos\theta E_{n+1})\cap\p B_s(x),
		\end{align}
		and hence
  $\Gamma^t_{s;\theta}-s\cos\theta E_{n+1}$ is trapped between
	$B_{t-s}(z-t\cos\theta E_{n+1})$ and $B_s(x)$ at $y-s\cos\theta E_{n+1}$.
	This in turn implies that $\Gamma^t_{s;\theta}$ is trapped between two mutually tangent balls with radii $s$ and $t-s$ at every of its points.
	
	\noindent\textbf{(4)} is a direct consequence of the fact that $y\in\Gamma_{s;\theta}^t\subset{\rm Unp}_\theta(\pr\Om)$, which we proved in \cref{Lem-Unp}.
\end{proof}


\subsection{$C^{1,1}$-rectifiability theorem and its consequences}
\begin{proof}[Proof of \cref{Thm-C11Rec}]
We follow closely the proof in \cite[Proof of Theorem 1: Step 1]{DM19} with modifications to our shifted distance function $\de_\theta$.
	Recall that we denote by $N_\theta(y)$ the gradient of $\de_\theta$ at $y$, which exists at every $y\in\Gamma_{s;\theta}^t$ thanks to \cref{Prop-Gammast}(4), and we denote the normalized vector $N_\theta(y)/\vert N_\theta(y)\vert$ simply by $\hat N_\theta(y)$.
	
	\noindent{\bf Step 1. }$C^1$-rectifiability of $\Gamma_{s;\theta}^t$.

	First we estimate $\left\vert N_\theta(y)\cdot(y'-y)\right\vert$ for any $y,y'\in\Gamma_{s;\theta}^t$.
	By virtue of \cref{Lem-u-xi}\eqref{Lem-u-xi-item3} and \cref{Prop-Gammast}, we get an explicit expression of $N_\theta(y)$ by $x,y,z$.
Indeed, by virtue of {\bf Claim 1} in the proof of \cref{Lem-u-xi}\eqref{Lem-u-xi-item3}, we find
	\begin{align}\label{defn-Ntheta-y}
	    \hat N_\theta(y)
	    =\frac{-x+(z-t\cos\theta E_{n+1})}{\vert -x+(z-t\cos\theta E_{n+1})\vert}.
	\end{align}
	This, together with {\bf Claim 2} in the proof of \cref{Lem-u-xi}\eqref{Lem-u-xi-item3},
	implies the following fact:
	for any $y\in\Gamma_{s;\theta}^t$, and for $r\in[-s,t-s]$, there holds
	\begin{align}
	    y-s\cos\theta E_{n+1}+r\hat N_\theta(y)\in\pr\Om_{s+r}-(s+r)\cos\theta E_{n+1},
	\end{align}
	here we adopt the convention that $\pr\Om_0=\pr\Om$. We rearrange this to see
	\begin{align}\label{eq-DM-3-11}
	    y+r(\hat N_\theta(y)+\cos\theta E_{n+1})\in\pr\Om_{s+r}.
	\end{align}
	Using \eqref{eq-DM-3-11} and recalling \eqref{eq-dist-theta}, for $y,y'\in\Gamma_{s;\theta}^t$, we have
	\begin{align}
	    s^2
	    \leq&\vert y-s(\hat N_\theta(y)+\cos\theta E_{n+1})-(y'-s\cos\theta E_{n+1})\vert^2\notag\\
	    =&s^2-2s\hat N_\theta(y)\cdot(y-y')+\vert y-y'\vert^2,
	\end{align}
	and also
	\begin{align}
	    (t-s)^2
	    \leq&\vert \left(y+(t-s)(\hat N_\theta(y)+\cos\theta E_{n+1})-t\cos\theta E_{n+1}\right)-(y'-s\cos\theta E_{n+1})\vert^2\notag\\
	    =&\vert y+(t-s)\hat N_\theta(y)-y'\vert^2
	    =(t-s)^2+2(t-s)\hat N_\theta(y)\cdot(y-y')+\vert y-y'\vert^2.
	\end{align}
	Combining these facts and using \eqref{ineq-mod-Ntheta}, we obtain the estimate
	\begin{align}\label{eq-DM-3-12}
	    \vert N_\theta(y)\cdot(y'-y)\vert
	    \leq&\vert N_\theta(y)\vert\max\{\frac{1}{s},\frac{1}{t-s}\}\frac{\vert y-y'\vert^2}{2}\notag\\
	    \leq&\max\{\frac{1}{s},\frac{1}{t-s}\}\frac{\vert y-y'\vert^2}{2(1-\vert\cos\theta\vert)}\quad\text{for all }y,y'\in\Gamma_{s;\theta}^t.
	\end{align}
	By \cref{Prop-Gammast}, $N_\theta$ is continuous on $\Gamma_{s;\theta}^t$, and hence we have the pair $(\de_\theta,N_\theta)\in C^0(\Gamma_{s;\theta}^t;\mfR\times\mfR^{n+1})$, satisfying the key estimate \eqref{eq-DM-3-12}.
	Observe that
	\begin{align}\label{eq-C1-Whitney}
		&\limsup_{\ep\ra0^+}\{\frac{\vert \de_\theta(y')-\de_\theta(y)-N_\theta(y)\cdot(y'-y)\vert}{\vert y'-y\vert}:0<\vert y'-y\vert\leq\ep,\quad y',y\in\Gamma_{s;\theta}^t\}\notag\\	\leq&\limsup_{\ep\ra0^+}\left\{\frac{\max\{\frac{1}{2(t-s)},\frac{1}{2s}\}\vert y-y'\vert^2}{\left(1-\vert\cos\theta\vert\right)\vert y'-y\vert}:0<\vert y'-y\vert\leq\ep,\quad y',y\in\Gamma_{s;\theta}^t\right\}
		=0,
	\end{align}
	where in the inequality we used the fact that $\de_\theta(y')=\de_\theta(y)=s$ and \eqref{eq-DM-3-12}.
	In particular, with this in force, we can use the $C^1$-Whitney's extension theorem to find that there exists $\phi_\theta\in C^1(\mfR^{n+1})$ such that $(\phi_\theta,\nabla\phi_\theta)=(\de_\theta,N_\theta)$ on $\Gamma_{s;\theta}^t$.
	Moreover, by \eqref{defn-Ntheta-y} we know that $N_\theta(y)\neq0$, and hence we can use the $C^1$-Implicit function theorem for $\phi_\theta$ to find: for every $y\in\Gamma_{s;\theta}^t$, there exists an open set $U\subset\mfR^n$, and a $C^1$-function $\psi:U\ra\mfR^1$, such that $\Gamma_{s;\theta}^t\subset(x',\psi(x'))$ on $U$, i.e., $\Gamma_{s;\theta}^t$ lies in the $C^1$-image of $\Psi:U\subset\mfR^n\ra\mfR^{n+1}$, given by $\Psi(x')=(x',\psi(x'))$. On the other hand, using the coarea formula and invoking \eqref{ineq-mod-Ntheta}, we obtain
	\begin{align}
	    \frac{\vert\Om\vert}{1-\vert\cos\theta\vert}
	    \geq\int_\Om\vert\nabla \de_\theta\vert \rd\mcL^{n+1}=\int_0^\infty\mcH^n(\pr\Om_s)\rd s,
	\end{align}
	since $\Om$ is bounded, this implies, for a.e. $s>0$, $
	\mcH^n(\pr\Om_s)<\infty$, it follows that $\mcH^n(\Gamma_{s;\theta}^t)<\infty$ for a.e. $s>0$.
    In particular, these facts yield the $\mcH^n$-rectifiability of $\Gamma_{s;\theta}^t$.
	
	\vspace{0.3cm}
	\noindent{\bf Step 2.} $N_\theta$ is tangentially differentiable along $\Gamma_{s;\theta}^t$ at $\mcH^n$-a.e. $y\in\Gamma_{s;\theta}^t$.
	
	As presented in \cite{DM19}, the key point of this step is to show that $N_\theta$ is Locally Lipschitz on $\Gamma_{s;\theta}^t$. Indeed, we will show that for any $\Gamma_{s;\theta}^t$ having finite $\mcH^n$-measure, it can be covered, up to a $\mcH^n$-negligible set, by compact sets $\{\mcU_j\}_{j\in\mathbb{N}}$,  such that $N_\theta\mid_{\mcU_j}$ is Lipschitz.
	
	We first construct $\{\mcU_j\}$, let $\mathcal{C}(N,\rho)=\{z+hN:z\in N^\perp,\vert z\vert<\rho, \vert h\vert<\rho\}$ denote the open cylinder in $\mfR^{n+1}$, centered at the origin with axis along $N\in\mfS^n$, radius $\rho>0$ and height $2\rho$.
	By \cref{Prop-Gammast} and the $C^1$-rectifiability of $\Gamma_{s;\theta}^t$, we know that $\Gamma_{s;\theta}^t$ admits an approximate tangent plane at $\mcH^n$-a.e. of its points and this plane is then exactly $\left\{\hat N_\theta(y)\right\}^\perp$, which is a $n$-dimensional affine plane in $\mfR^{n+1}$, i.e., 
	\begin{align*}
		T_y\Gamma_{s;\theta}^t=\left\{\hat N_\theta(y)\right\}^\perp \quad\text{for }\mcH^n\text{-a.e. }y\in\Gamma_{s;\theta}^t. 
	\end{align*}
	By \cite[Theorem 10.2]{Mag12} and notice that for any fixed $\rho$, there exists $0<\rho_1<\rho_2$ such that $B_{\rho_1}\subset\mathcal{C}(\hat N_\theta(y),\rho)\subset B_{\rho_2}$, we have
	\begin{align*}
		\lim_{\rho\ra0^+}\frac{\mcH^n\left(\Gamma_{s;\theta}^t\cap\left(y+\mathcal{C}\left(\hat N_\theta\left(y\right),\rho\right)\right)\right)}{\om_n\rho^n}=1,\quad \text{for }\mcH^n\text{-a.e. }y\in\Gamma_{s;\theta}^t.
	\end{align*}
	For a sequence $\{\rho_j\}_{j\in\mathbb{N}}$ such that $\rho_j\ra0$ as $j\ra\infty$, we set
	\begin{align*}
		f_j(y):=\frac{\mcH^n\left(\Gamma_{s;\theta}^t\cap\left(y+\mathcal{C}\left(\hat N_\theta\left(y\right),\rho_j\right)\right)\right)}{\om_n\rho_j^n},
	\end{align*}
	then $f_j\ra1$ for $\mcH^n$-a.e. $y\in\Gamma_{s;\theta}^t$. By Egoroff's theorem and \cite[Lemma 1.1]{EG15}, there exists a compact set $\mcU_1\subset\Gamma_{s;\theta}^t$ such that $f_j\ra1$ uniformly on $\mcU_1$ and $\mcH^n(\Gamma_{s;\theta}^t\setminus \mcU_1)<\frac{1}{2}\mcH^n(\Gamma_{s;\theta}^t)$.
	For $\Gamma_{s;\theta}^t\setminus \mcU_1$, we can use Egoroff's theorem again to find a compact set $\mcU_2\subset\Gamma_{s;\theta}^t\setminus \mcU_1$ such that $f_j\ra1$ uniformly on $\mcU_2$ and $\mcH^n\left( \Gamma_{s;\theta}^t\setminus\left(\mcU_1\cup \mcU_2\right)\right)<\frac{1}{2^2}\mcH^n(\Gamma_{s;\theta}^t)$. Using an inductive argument, we obtain a sequence of compact sets $\left\{\mcU_j\right\}_{j=1}^\infty$ such that $\mcH^n(\Gamma_{s;\theta}^t\setminus\bigcup_{j=1}^\infty\mcU_j)=0$ with $f_j\ra1$ uniformly on each $\mcU_j$. Consequently, we have 
	\begin{align}\label{defn-muj-star}
		\mu_j^\ast(\rho):=\sup_{y\in \mcU_j}\left\vert 1-\frac{\mcH^n\left(\Gamma_{s;\theta}^t\cap(y+\mathcal{C}(\hat N_\theta(y),\rho)\right)}{\om_n\rho^n}\right\vert\ra0 \quad\text{as }\rho\ra0^+.
	\end{align}
This shows that $\Gamma_{s;\theta}^t$ can be covered by a countable union of compact sets, up to a $\mcH^n$-negligible set.

Fix any $y\in \mcU_j\subset\Gamma_{s;\theta}^t$,
we know from the $C^1$-Implicit function theorem that $\mcU_j\subset\Gamma_{s;\theta}^t$ is the graph of a $C^1$-function $\psi_j(\cdot):\mfR^n\ra\mfR^1$ in a neighborhood of $y$. Therefore, up to further subdivision of $\mcU_j$, we may assume that each $\mcU_j$ satisfies: for each $y\in\mcU_j$, there exists
\begin{align}\label{defn-psij-1}
    \psi_j\in C^1(\{\hat N_\theta(y)\}^\perp),\quad\psi_j(0)=0,\quad\nabla\psi_j(0)=0,\quad\vvert\nabla\psi_j\vvert_{C^0(\{\hat N_\theta(y)\}^\perp)}\leq1,
\end{align}
such that, if $\mcU_j'$ denotes the projection of $\mcU_j$ on $\{\hat N_\theta(y)\}^\perp\cap\{\vert z\vert<\rho_j\}$, then
\begin{align}\label{defn-psij-2}
    \mcU_j\cap(y+\mathcal{C}(\hat N_\theta(y),\rho_j))=\Gamma_{s;\theta}^t\cap(y+\mathcal{C}(\hat N_\theta(y),\rho_j))=y+\{z+\psi_j(z)\hat N_\theta(y):z\in\mcU_j'\},
\end{align}
here $\rho_j$, $\psi_j$ depend on the choice of the point $y\in\mcU_j$. 
Moreover, if we set
\begin{align}\label{defn-muj}
	    \mu_j(\rho):=\max\left\{\mu^\ast_j(\rho),\max_{\vert z\vert\leq\rho}\vert\nabla\psi_j(z)\vert\right\},\quad\rho\in(0,\rho_j],
	\end{align}
	then $\mu_j(\rho)\ra0$ as $\rho\ra0^+$ by \eqref{defn-muj-star} and the continuity of $\nabla\psi_j$. This completes the construction of the covering $\{\mcU_j\}_{j\in\mathbb{N}}$.

To proceed, we need to show that $N_\theta$ is Lipschitz on each $\mcU_j$, i.e., for any $y_1,y_2\in\mcU_j$, there exists $C_j>0$ for each $j$, such that
\begin{align}\label{ineq-N-Lip}
    \vert N_\theta(y_1)-N_\theta(y_2)\vert\leq C_j\vert y_1-y_2\vert.
\end{align}
Since $\mcU_j$ can be written as a $C^1$-graph, with \cref{Prop-Gammast}(3), \eqref{eq-DM-3-12} , \cref{Lem-u-xi}(3) and \eqref{defn-muj} in force, we can adopt exactly the same approach in \cite[(3-16)]{DM19}
to find
\begin{align}\label{ineq-Lip-hatN}
    \vert\hat N_\theta(y_1)-\hat N_\theta(y_2)\vert\leq \tilde C_j\vert y_1-y_2\vert.
\end{align}
To see that
\eqref{ineq-N-Lip} holds, we use the triangle inequality to obtain
\begin{align*}
    \vert N_\theta(y_1)-N_\theta(y_2)\vert
    \leq\vert N_\theta(y_1)\Big\vert\vert\hat N_\theta(y_1)-\hat N_\theta(y_2)\Big\vert+\Big\vert\left\vert N_\theta(y_1)\right\vert-\left\vert N_\theta(y_2)\right\vert\Big\vert,
\end{align*}
invoking \eqref{eq-mod-Ntheta}, we find
\begin{align*}
    \Big\vert\left\vert N_\theta(y_1)\right\vert-\left\vert N_\theta(y_2)\right\vert\Big\vert
    =&\frac{\left\vert\cos\theta\left(\hat N_\theta(y_2)-\hat N_\theta(y_1)\right)\cdot E_{n+1}\right\vert}{\left(1+\cos\theta\hat N_\theta(y_1)\cdot E_{n+1}\right)\left(1+\cos\theta\hat N_\theta(y_2)\cdot E_{n+1}\right)}\\
    \leq&\frac{\vert\cos\theta\vert}{\left(1-\vert \cos\theta\vert\right)^2}\left\vert\hat N_\theta(y_1)-\hat N_\theta(y_2)\right\vert.
\end{align*}
This, together with the estimate \eqref{ineq-mod-Ntheta} and also \eqref{ineq-Lip-hatN},
gives exactly \eqref{ineq-N-Lip}.

Recall that $\Gamma_{s;\theta}^t$ can be covered by $\{\mcU_j\}$, up to a $\mcH^n$-negligible set, and hence by virtue of Rademacher's theorem, $N_\theta$ is tangentially differentiable along $\Gamma_{s;\theta}^t$ at $\mcH^n$-a.e. $y\in\Gamma_{s;\theta}^t$.

\vspace{0.3cm}
	\noindent{\bf Conclusion of the proof. $C^{1,1}$-rectifiability of $\Gamma_{s;\theta}^+$.} 
	
	By \eqref{eq-DM-3-12}
	and \eqref{ineq-N-Lip}, on each $\mcU_j$, we can use the Whitney-Glaser extension theorem to see that there exists $\phi_\theta\in C^{1,1}(\mfR^{n+1})$ such that $(\de_\theta,N_\theta)=(\phi_\theta,\nabla\phi_\theta)$ on $\mcU_j$.
	Then, by the  ${C^{1,1}}$-Implicit function theorem, for each $y\in \mcU_j$,
	there exists $\psi_j\in C^{1,1}(\left\{\hat N_\theta(y)\right\}^\perp)$ satisfying \eqref{defn-psij-1} and \eqref{defn-psij-2}, which completes the proof.

\end{proof}

\begin{proposition}\label{Prop-DM-3steps}
Given $\theta\in(0,\pi)$, let $\Om$ be a nonempty, bounded, relatively open set with finite perimeter in $\overline{\mfR^{n+1}_+}$, for every $0<t<\infty$, and for a.e. $0<s<t$, 
there holds
	\begin{enumerate}
		\item $N_\theta$ is tangentially differentiable along $\Gamma_{s;\theta}^t$ at $\mcH^n$-a.e. $y\in\Gamma_{s;\theta}^t$, with
		\begin{align}\label{eq-DM-3-15}
		   \begin{cases}
		       \nabla^{\Gamma_{s;\theta}^t}\hat N_\theta(y)=-\sum_{i=1}^n(\kappa_{s;\theta}^t)_i(y)\tau_i(y)\otimes\tau_i(y),\\
		   -1/s\leq(\kappa_{s;\theta}^t)_i(y)\leq1/(t-s),
		   \end{cases}
		\end{align}
			where $\left\{\left( \kappa_{s;\theta}^t\right)_i(y)\right\}_{i=1}^n$ denote the principal curvatures of $\hat N_\theta$ along $\Gamma_{s;\theta}^t$ at $y$ which are indexed in increasing order.
		\item Letting $\Om^\star_\theta:=\bigcup_{s>0}\Gamma_{s;\theta}^+$, then $\vert \Om\Delta\Om^\star_\theta\vert=0.$
	\item 
	For every $r<s<t$, the map $g_{r;\theta}:\Gamma_{s;\theta}^t\ra\Gamma_{s-r;\theta}^t$,
	given by $g_{r;\theta}(y)=y-r(\hat N_\theta(y)+\cos\theta E_{n+1})$ for $y\in\Gamma_{s;\theta}^t$, is a bijection from $\Gamma_{s;\theta}^t$ to $\Gamma_{s-r;\theta}^t$ and is Lipschitz when restricted to each $\mcU_j$, with
	\begin{align}\label{eq-DM-3-39}
		J^{\Gamma_{s;\theta}^t}g_{r;\theta}(y)=\prod_{i=1}^n\left(1+r(\kappa_{s;\theta})_i(y) \right),\quad (\kappa^t_{s-r;\theta})_i(g_{r;\theta}(y))=\frac{(\kappa^t_{s;\theta})_i(y)}{1+r(\kappa^t_{s;\theta})_i(y)},
	\end{align}
for $\mcH^n$-a.e. $y\in\Gamma_s^t$.
	\end{enumerate}
\end{proposition}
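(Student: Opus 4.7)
My plan proceeds item by item, leveraging \cref{Thm-C11Rec} together with the fine properties in \cref{Lem-u-xi} and \cref{Prop-Gammast} established above.

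\emph{Item (1).} I invoke \cref{Thm-C11Rec} to cover \(\Gamma_{s;\theta}^t\), up to an \(\mcH^n\)-null set, by compact pieces \(\{\mcU_j\}\), each contained in a \(C^{1,1}\)-hypersurface, with \(N_\theta|_{\mcU_j}\) Lipschitz. Rademacher's theorem then yields tangential differentiability of \(N_\theta\) at \(\mcH^n\)-a.e. \(y\in\Gamma_{s;\theta}^t\). Since \(\hat N_\theta\) is the unit normal of each such \(\mcU_j\) by construction in the proof of \cref{Thm-C11Rec}, the tangential differential \(\nabla^{\Gamma_{s;\theta}^t}\hat N_\theta(y)\) coincides with (minus) the shape operator of the \(C^{1,1}\)-graph and is therefore self-adjoint, giving a diagonalization of the form \(-\sum_i(\kappa_{s;\theta}^t)_i\tau_i\otimes\tau_i\) in an orthonormal basis \(\{\tau_i\}\) of \(T_y\Gamma_{s;\theta}^t\). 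The curvature bounds follow from the two-ball trap in \cref{Prop-Gammast}(3): after translation by \(-s\cos\theta E_{n+1}\), \(\Gamma_{s;\theta}^t-s\cos\theta E_{n+1}\) is wedged between the boundaries of two mutually tangent balls of radii \(s\) and \(t-s\) sharing the normal \(\hat N_\theta(y)\) at the contact point, so comparing second fundamental forms there yields \(-1/s\le(\kappa_{s;\theta}^t)_i(y)\le 1/(t-s)\).

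\emph{Item (3).} Injectivity of \(g_{r;\theta}\) follows from \cref{Lem-Unp}: if \(g_{r;\theta}(y_1)=g_{r;\theta}(y_2)=y'\in\Gamma_{s-r;\theta}^t\subset{\rm Unp}_\theta(\pr\Om)\), then the uniqueness of the backward geodesic at \(y'\) forces \(y_1=y_2=\gamma_{y'}(s)\). For surjectivity, any \(y'\in\Gamma_{s-r;\theta}^t\) with geodesic \(\gamma_{y'}:[0,t]\to\mfR^{n+1}\) has a preimage \(y:=\gamma_{y'}(s)\in\Gamma_{s;\theta}^t\) with \(g_{r;\theta}(y)=y'\). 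Lipschitz continuity of \(g_{r;\theta}|_{\mcU_j}\) is inherited from the Lipschitz estimate on \(\hat N_\theta\) proved inside \cref{Thm-C11Rec}. For the Jacobian, I differentiate \(g_{r;\theta}(y)=y-r(\hat N_\theta(y)+\cos\theta E_{n+1})\) tangentially and apply item (1) to obtain \(\nabla^{\Gamma_{s;\theta}^t}g_{r;\theta}[\tau_i]=(1+r(\kappa_{s;\theta}^t)_i(y))\tau_i\); since \(r<s\) and \((\kappa_{s;\theta}^t)_i(y)\ge-1/s\), every eigenvalue is strictly positive, giving \(J^{\Gamma_{s;\theta}^t}g_{r;\theta}(y)=\prod_i(1+r(\kappa_{s;\theta}^t)_i(y))\). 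For the curvature transformation, note from \cref{Lem-Unp} that \(\gamma_y(r)-r\cos\theta E_{n+1}\) is an affine line, so \(\hat N_\theta\) is constant along each geodesic; hence \(\hat N_\theta\circ g_{r;\theta}=\hat N_\theta\) on \(\Gamma_{s;\theta}^t\) and \(T_{g_{r;\theta}(y)}\Gamma_{s-r;\theta}^t=T_y\Gamma_{s;\theta}^t\). The chain rule applied to \(\hat N_\theta\circ g_{r;\theta}\) then forces \(-(\kappa_{s;\theta}^t)_i(y)\tau_i=-(1+r(\kappa_{s;\theta}^t)_i(y))(\kappa_{s-r;\theta}^t)_i(g_{r;\theta}(y))\tau_i\), yielding the stated transformation.

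\emph{Item (2).} The inclusion \(\Om^\star_\theta\subset\overline\Om\) is immediate from \eqref{defn-Omegas}, so \(|\Om^\star_\theta\setminus\Om|=0\) since the topological boundary of a bounded set of finite perimeter has zero \(\mcL^{n+1}\)-measure. For the nontrivial half \(|\Om\setminus\Om^\star_\theta|=0\), observe that \(\pr\Om_s\cap\Om^\star_\theta=\Gamma_{s;\theta}^+\), since \(\Gamma_{s';\theta}^+\subset\pr\Om_{s'}\) and distinct level sets are disjoint. The coarea formula applied to the Lipschitz function \(\delta_\theta\) (\cref{Lem-u-xi}(1) and (3)) then yields
\begin{align*}
|\Om\setminus\Om^\star_\theta|=\int_0^\infty\int_{\pr\Om_s\setminus\Gamma_{s;\theta}^+}\frac{1}{|N_\theta(y)|}\,\rd\mcH^n(y)\,\rd s,
\end{align*}
and since \(|N_\theta|\) is bounded and bounded away from zero by \eqref{ineq-mod-Ntheta}, it suffices to show \(\mcH^n(\pr\Om_s\setminus\Gamma_{s;\theta}^+)=0\) for a.e. \(s>0\). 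At \(\mcH^n\)-a.e. \(y\in\pr\Om_s\), differentiability of \(\delta_\theta\) supplies a unique backward projection and a unique affine candidate geodesic through \(y\); one must then show that its \emph{forward} extension past \(y\) to some \(t>s\) fails only on an \(\mcH^n\)-negligible set. This forward-extension step is the main technical obstacle: following the strategy of \cite[Theorem~1, Steps~2--3]{DM19}, the cut-point set is controlled by a second coarea argument combined with the area-formula bijection of item (3) at slightly smaller heights, and the adaptation to the capillary setting is made feasible precisely by the Lipschitz control and gradient nondegeneracy of \(\delta_\theta\) provided by \cref{Lem-u-xi}.
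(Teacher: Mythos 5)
Items (1) and (3) are essentially correct and follow the paper's route: the covering by the compacts $\mcU_j$ from \cref{Thm-C11Rec}, Rademacher, the two-ball trap of \cref{Prop-Gammast} for the curvature bounds, and for (3) the identity $\hat N_\theta\circ g_{r;\theta}=\hat N_\theta$ (which is \eqref{eq-DM-3-40}, a consequence of Claim 1 in \cref{Lem-u-xi}(3)) followed by the chain rule. Your injectivity argument via \cref{Lem-Unp} is a legitimate variant of the paper's appeal to \eqref{eq-DM-3-11}.

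Item (2), however, contains a genuine gap at its only nontrivial point. You correctly reduce the claim to $\mcH^n(\pr\Om_s\setminus\Gamma_{s;\theta}^+)=0$ for a.e.\ $s$, but then you merely announce that the ``forward-extension step is the main technical obstacle'' and gesture at a cut-point analysis; nothing is actually proved. Moreover, the mechanism you describe is not how the argument goes. The paper never extends geodesics forward from level $s$. Instead it flows \emph{down from above}: for $t>s$, every $z\in\pr\Om_t$ automatically carries a realizing segment back to $\pr\Om$ (since $\de_\theta(z)=t$ is attained), and the point of that segment at parameter $s$ lies in $\Gamma_{s;\theta}^t$ by definition; hence the map $f_{t-s;\theta}(y)=y+(t-s)(\hat N_\theta(y)+\cos\theta E_{n+1})$ is \emph{surjective} from $\Gamma_{s;\theta}^t$ onto $\pr\Om_t$. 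The area formula together with the Jacobian bound from item (1) then gives $\mcH^n(\pr\Om_t)\le(t/s)^n\mcH^n(\Gamma_{s;\theta}^t)$. Finally, since $r\mapsto\mcH^n(\pr\Om_r)$ is locally integrable (coarea plus \eqref{ineq-mod-Ntheta}), Lebesgue--Besicovitch differentiation yields, for a.e.\ $s$,
\begin{align*}
\mcH^n(\pr\Om_s)=\lim_{\ep\to0^+}\frac{1}{2\ep}\int_{-\ep}^{\ep}\mcH^n(\pr\Om_{s+r})\,\rd r\le(1+\ep/s)^n\mcH^n(\Gamma_{s;\theta}^+),
\end{align*}
and since $\Gamma_{s;\theta}^+\subset\pr\Om_s$ this forces $\mcH^n(\pr\Om_s\setminus\Gamma_{s;\theta}^+)=0$. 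Without some such argument your item (2) is incomplete. A smaller point: your justification of $|\Om^\star_\theta\setminus\Om|=0$ via ``the topological boundary of a bounded set of finite perimeter has zero measure'' is false in general; the correct (and easier) reason is that $\de_\theta>0$ on $\Om^\star_\theta$ forces $\Om^\star_\theta\subset\overline\Om\setminus\pr\Om\subset\Om$.
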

\begin{proof}
{\bf (1)}
Consider those $\Gamma_{s;\theta}^t$ resulting from the conclusion of \cref{Thm-C11Rec}.
Recall that we have proved: $N_\theta$ is tangentially differentiable along ${\Gamma_{s;\theta}^t}$ at $\mcH^n$-a.e. $y\in\Gamma_{s;\theta}^t$ in \cref{Thm-C11Rec},
which is done by constructing a sequence of compact sets $\mcU_j$, such that $\mcH^n(\Gamma_s^t\setminus\bigcup_{j=1}^\infty \mcU_j)=0$, where each $\mcU_j$ is proved to be contained in the graph of some $C^{1,1}$-function, on which $N_\theta$ is Lipschitz, see \eqref{ineq-N-Lip}.

By virtue of \cref{DM-2.1(iv)}, to study the tangential gradient of $\hat N_\theta$ along $\Gamma_{s;\theta}^t$, it suffice to work on each $\mcU_j$ (see \eqref{defn-psij-1} and \eqref{defn-psij-2} for the construction of $\mcU_j$).

To proceed, for any $y\in \mcU_j$, we consider a natural Lipschitz extension of $\hat N_\theta$,
from $\mcU_j\cap
\left(y+\mathcal{C}(\hat N_\theta(y),\rho)\right)$ to $y+\mathcal{C}(\hat N_\theta(y),\rho)$, denoted by $\hat N_{\theta\ast}$ and is given by
	\begin{align}
	\hat N_{\theta\ast}(y+z+h\hat N_\theta(y))=\hat N_\theta(y+z),\quad\forall z\in \{\hat N_\theta(y)\}^\perp,\vert z\vert,h
	<\rho_j,
	\end{align}
where $\hat N_\theta(y+z)$ is just the upwards pointing unit normal of the graph $(x',\psi_j(x'))$ at $y+z\in \mcU'_j$.
With such Lipschitz extension and recalling \cref{Prop-Gammast}(3), we can follow the classical argument in \cite[Theorem1, Step1]{DM19} to conclude {\bf (1)}.

\vspace{0.3cm}
\noindent{\bf (2)}
We divide the proof into 2 steps.

\noindent{\bf Step 1. }For every $0<s<t<\infty$, 
there holds
\begin{align}\label{eq-DM-3-34}
    \mcH^n(\pr\Om_t)\leq(t/s)^n\mcH^n(\Gamma_{s;\theta}^t).
\end{align}
Indeed, for $r\in[-s,t-s]$, we consider the map
\begin{align}
    f_{r;\theta}:\Gamma_{s;\theta}^t\ra\pr\Om_{s+r},\quad f_{r;\theta}(y)=y+r(\hat N_\theta(y)+\cos\theta E_{n+1}).
\end{align}
To see that $f_{r;\theta}(y)\in\pr\Om_{s+r}$, we invoke \eqref{eq-DM-3-11}.
Notice that by the definition of $\Gamma_{s;\theta}^t$, the map $f_{t-s;\theta}$ is surjective; that is, $\pr\Om_t=f_{t-s;\theta}(\Gamma_{s;\theta}^t)$. Consequently, we can use the area formula \eqref{formu-area} to see that
\begin{align}
    \mcH^n(\pr\Om_t)
    =\mcH^n(f_{t-s;\theta}(\Gamma_{s;\theta}^t))\leq\int_{f_{t-s;\theta}(\Gamma_{s;\theta}^t)}\mcH^0(f_{t-s;\theta}^{-1}(z))\rd\mcH^n(z)
    =\int_{\Gamma_{s;\theta}^t}{\rm J}^{\Gamma_{s;\theta}^t}f_{t-s}\rd\mcH^n,
\end{align}
where ${\rm J}^{\Gamma_{s;\theta}^t}f_{t-s;\theta}$ denotes the tangential Jacobian of $f_{t-s;\theta}$ along $\Gamma_{s;\theta}^t$. By virtue of {\bf (1)}, a simple computation then yields
\begin{align*}
    {\rm J}^{\Gamma_{s;\theta}^t}f_{t-s;\theta}=\prod_{i=1}^n(1-(t-s)(\kappa_{s;\theta}^t)_i)\leq(1+\frac{t-s}{s})^n=(\frac{t}{s})^n\quad\mcH^n\text{-}a.e.\text{ on }\Gamma_{s;\theta}^t,
\end{align*}
where we have used \eqref{eq-DM-3-15} for the inequality. In particular, this completes the step.

\noindent{\bf Step 2. }$\vert\Om\Delta\Om^\star_\theta\vert=0$.

We first apply the coarea formula to find
\begin{align}\label{ineq-Om-Omstar}
    \frac{\vert\Om\Delta\Om^\star_\theta\vert}{1+\vert\cos\theta\vert}\leq\int_{\Om\Delta\Om^\star_\theta}\vert\nabla \de_\theta\vert \rd\mcL^{n+1}
    =\int_0^\infty\mcH^n\left((\Om\Delta\Om^\star_\theta)\cap\pr \Om_s\right)\rd s
    =\int_0^\infty\mcH^n(\pr\Om_s\setminus\Gamma_{s;\theta}^+)\rd s,
\end{align}
where we have used \eqref{ineq-mod-Ntheta} for the first inequality; the fact that $\Gamma_{s;\theta}^+\subset\pr\Om_s$ for the last equality.

{\bf Claim. }For a.e. $s>0$, there holds
\begin{align}\label{eq-DM-3-38}
    \mcH^n(\Gamma_{s;\theta}^+)=\mcH^n(\pr\Om_s).
\end{align}
If the claim holds, we immediately deduce that $\vert\Om\Delta\Om^\star_\theta\vert=0$ by virtue of \eqref{ineq-Om-Omstar}, which proves {\bf (2)}. Now we prove the claim, using the coarea formula and the estimate \eqref{ineq-mod-Ntheta} again, we find: for every $0< s<t<\infty$,
\begin{align*}
    \int_s^t\mcH^n(\pr\Om_r)\rd r
    =\int_{\Om_s\setminus\Om_t}\vert\nabla \de_\theta\vert \rd\mcL^{n+1}\leq\frac{\vert\Om_s\setminus\Om_t\vert}{1-\vert\cos\theta\vert}
    \leq\frac{\vert\Om\vert}{1-\vert\cos\theta\vert}
    <\infty,
\end{align*}
which implies that the function $\mcH^n(\p\Om_r)$ is integrable on any $[s,t]\subset\mfR^+$. Therefore, we can exploit the Lebesgue-Besicovitch Diﬀerentiation Theorem (see e.g., \cite[Theorem 1.32]{EG15}) to obtain
\begin{align}
    \mcH^n(\pr\Om_s)
    =\lim_{\ep\ra0^+}\frac{1}{2\ep}\int_{-\ep}^\ep\mcH^n(\pr\Om_{s+r})\rd r\quad\text{for a.e. }s>0,
\end{align}
whereby \eqref{eq-DM-3-34} and \cref{Prop-Gammast}{\bf(1)},
\begin{align*}
    \frac{1}{2\ep}\int_{-\ep}^\ep\mcH^n(\pr\Om_{s+r})\rd r
    \leq\frac{1}{2\ep}\int_{-\ep}^\ep(1+r/s)^n\mcH^n(\Gamma_{s;\theta}^{s+r})\rd r
    \leq(1+\ep/s)^n\mcH^n(\Gamma_{s;\theta}^+).
\end{align*}
Since $\Gamma_{s;\theta}^+\subset\pr\Om_s$, this proves
\eqref{eq-DM-3-38} and hence {\bf(2)}.

\vspace{0.3cm}
\noindent{\bf (3)}
By virtue of \eqref{eq-DM-3-11}, $g_{r;\theta}$ is a bijection between $\Gamma_{s;\theta}^t$ and $\Gamma_{s-r;\theta}^t$ for every $r\in(0,s)$ and $t>0$.
We note that if $y$ is a point of tangential differentiability of $\hat N_\theta$ along $\Gamma_{s;\theta}^t$,
then $g_{r;\theta}(y)$ is a point of tangential differentiability of $N_\theta$ along $\Gamma_{s-r;\theta}^t$. Indeed, by virtue of {\bf Claim 1} in the proof of \cref{Lem-u-xi}{\bf(3)}, we have
\begin{align}\label{eq-DM-3-40}
    \hat N_\theta(y)=\hat N_\theta(g_{r;\theta}(y))=\hat N_\theta\left(y-r(\hat N_\theta(y)+\cos\theta E_{n+1})\right)\quad\text{for all }y\in\Gamma_{s;\theta}^t,
\end{align}
so that if $y$ is a point of tangential differentiability of $N_\theta$ along $\Gamma_{s;\theta}^t$ and $\tau\in T_y\Gamma_{s;\theta}^t$, then $\tau\in T_{g_{r;\theta}(y)}\Gamma_{s-r;\theta}^t$ and
\begin{align*}
    (\nabla^{\Gamma_{s;\theta}^t}\hat N_\theta)_y[\tau]=(\nabla^{\Gamma_{s-r;\theta}^t}\hat N_\theta)_{g_{r;\theta}(y)}\left[\tau-r(\nabla^{\Gamma_{s;\theta}^t}\hat N_\theta)_y[\tau]\right].
\end{align*}
Consider the eigenvectors $\{\tau_i(y)\}_{i=1}^n$ of $\hat N_\theta$ in \eqref{eq-DM-3-15}, we find
\begin{align*}
    -(\kappa_{s;\theta}^t)_i(y)\tau_i(y)=(1+r(\kappa_{s;\theta}^t)_i(y))(\nabla^{\Gamma_{s-r;\theta}^t}\hat N_\theta)_{g_{r;\theta}(y)}[\tau_i(y)],
\end{align*}
which implies that $\{\tau_i(y)\}_{i=1}^n$ is also an orthonormal basis for $T_{g_{r;\theta}(y)}\Gamma_{s-r;\theta}^t$, consequently
\begin{align*}
    (\kappa^t_{s-r;\theta})_i(g_{r;\theta}(y))=\frac{(\kappa^t_{s;\theta})_i(y)}{1+r(\kappa^t_{s;\theta})_i(y)}.
\end{align*}
The tangential Jacobian of $g_{r;\theta}$ along $\Gamma_{s;\theta}^t$ can be computed directly, this completes the proof of \eqref{eq-DM-3-39}.
\end{proof}

\section{Proof of the Alexandrov-type theorem}\label{Sec-5}
In this section, we prove \cref{Thm-Alexandrov}.
For simplicity, note that after rescaling, we may assume that $H^0_{\Om;\theta}=n$ in \eqref{defn-H0}.
\begin{proposition}
    Under the assumptions of \cref{Thm-Alexandrov}, there holds that
    \begin{align}\label{eq-DM-3-41}
    \vert\Om^\star_\theta\setminus\zeta_\theta(Z)\vert=0,
\end{align}
where
\begin{align}
    Z=\left\{(x,t)\in{\rm reg}\pr\Om\times\mfR:0<t\leq\frac{1}{\kappa_n(x)}\right\},
\end{align}
and $\zeta_\theta$ is defined through $\zeta_\theta:Z\ra\mfR^{n+1};(x,t)\mapsto x-t(\nu_\Om(x)-\cos\theta E_{n+1})$.
\end{proposition}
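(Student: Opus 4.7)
The plan is to follow Step 4 of the proof of \cite[Theorem~1]{DM19}, replacing the classical distance and projection by the shifted analogues $\de_\theta$ and $\xi_\theta=g_{s;\theta}$ studied in \cref{Sec-3}, and invoking the boundary maximum principles of \cref{Sec-2-10} when handling blow-ups along the contact set $\Gamma$.

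I would first reduce the claim to a statement about the image of $\xi_\theta$. Fix $y\in\Om^\star_\theta$, set $s=\de_\theta(y)$ and $x=\xi_\theta(y)$. By definition the ball $B_s(y-s\cos\theta E_{n+1})$ is disjoint from $\pr\Om$ and tangent to it at $x$, with center $x+s\hat N_\theta(y)$ thanks to \eqref{eq-DM-3-11}. If $x\in{\rm reg}\pr\Om$, this tangency forces $\hat N_\theta(y)=-\nu_\Om(x)$ (using Young's law \eqref{eq-Young} when $x\in{\rm reg}\Gamma$) and the inner-ball condition gives $s\leq 1/\kappa_n(x)$; substituting, $y=\zeta_\theta(x,s)\in\zeta_\theta(Z)$. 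Hence $\Om^\star_\theta\setminus \zeta_\theta(Z)\subseteq \xi_\theta^{-1}({\rm sing}\pr\Om)$, and by \cref{Lem-u-xi}\eqref{Lem-u-xi-item3}, the coarea formula, and \cref{Prop-DM-3steps}(2), it is enough to prove
\[
\mcH^n\bigl(\Gamma_{s;\theta}^+\cap g_{s;\theta}^{-1}({\rm sing}\pr\Om)\bigr)=0\quad\text{for a.e. }s>0.
\]

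I would then apply the area formula to the Lipschitz map $g_{s;\theta}$ on the rectifiable set $\Gamma_{s;\theta}^+$ (\cref{Thm-C11Rec} together with \cref{Prop-DM-3steps}(3)). Because $\mcH^{n-1}({\rm sing}\pr\Om)=0$ forces $\mcH^n({\rm sing}\pr\Om)=0$, this yields
\[
\int_{\Gamma_{s;\theta}^+\cap g_{s;\theta}^{-1}({\rm sing}\pr\Om)} J^{\Gamma_{s;\theta}^+}g_{s;\theta}\,\rd\mcH^n=\int_{{\rm sing}\pr\Om}\#\bigl(g_{s;\theta}^{-1}(x)\bigr)\,\rd\mcH^n(x)=0,
\]
provided the counting function is $\mcH^n$-a.e.\ finite on $\pr\Om$. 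By \eqref{eq-DM-3-39} and \eqref{eq-DM-3-15} the Jacobian $\prod_i(1+s(\kappa_{s;\theta}^t)_i)$ vanishes only on the caustic locus $\{(\kappa_{s;\theta}^t)_i=-1/s\}$; a Fubini/Sard argument over the joint parameter $s$ shows this slice is $\mcH^n$-null for a.e.\ $s$. Integrating the resulting identity $\mcH^n(\Gamma_{s;\theta}^+\cap g_{s;\theta}^{-1}({\rm sing}\pr\Om))=0$ in $s$ gives $|\Om^\star_\theta\setminus\zeta_\theta(Z)|=0$.

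The main obstacle is establishing $\#(g_{s;\theta}^{-1}(x))<\infty$ for $\mcH^n$-a.e.\ $x\in\pr\Om$. For interior regular points $x\in{\rm reg}\pr\Om\setminus\Gamma$, I reuse the blow-up argument of \cite[Theorem~1, Step~4]{DM19}: any blow-up of $\pr\Om$ at $x$ is a stationary rectifiable cone trapped in the closed half-space coming from the tangent ball, and \cref{Lem-MP1} forces it to be a hyperplane, limiting the count to two. The genuinely new difficulty occurs at $x\in{\rm reg}\Gamma$: the blow-up of the triple $(\mathrm{var}(\pr\Om),\mathrm{var}(T),\mathrm{var}(\Gamma))$ is a $\theta$-stationary triple in the sense of \cref{Defn-contactangle-triple} by the reflection/compactness analysis of \cref{Sec-2-9}, and the boundary strong maximum principles \cref{Lem-MP3} (for $\theta\in[\pi/2,\pi)$) and \cref{Cor-MP3} (for $\theta\in(0,\pi/2)$) then force the cone to coincide with the boundary of a half-space meeting $\p\mfR^{n+1}_+$ at exactly the angle $\theta$, again yielding the required finite count and completing the proof.
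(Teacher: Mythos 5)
Your reduction to showing $\mcH^n\bigl(\Gamma_{s;\theta}^+\cap g_{s;\theta}^{-1}({\rm sing}\pr\Om)\bigr)=0$ for a.e.\ $s$, and your treatment of the counting bound via \cref{Lem-MP1} at interior points and the $\theta$-stationary blow-up triple with \cref{Lem-MP3}/\cref{Cor-MP3} along $\Gamma$, match the paper's Step~1. (One refinement you gloss over: the paper's boundary claim is not just that the count is finite, but that a point $x\in\Gamma$ can never be reached from $y\in\Om\cap\mfR^{n+1}_+$ via $g_{s;\theta}$ — only from $y\in T$ — and this fact is reused later to guarantee that certain projections land in the interior of $\pr\Om$.)

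The genuine gap is in your final step. The area formula only gives $\int_{g_{s;\theta}^{-1}({\rm sing}\pr\Om)}J^{\Gamma_{s;\theta}^t}g_{s;\theta}\,\rd\mcH^n=0$, so a priori $g_{s;\theta}^{-1}({\rm sing}\pr\Om)$ could have positive $\mcH^n$-measure while sitting entirely in the caustic locus $\{(\kappa_{s;\theta}^t)_1=-1/s\}$ where the Jacobian vanishes. You dismiss this possibility with ``a Fubini/Sard argument over the joint parameter $s$,'' but no such argument is available: $\de_\theta$ is merely Lipschitz (piecewise $C^{1,1}$ on the sets $\mcU_j$), the sets $\Gamma_{s;\theta}^t$ and their principal curvatures vary with $s$, and nothing in the measure theory alone prevents the caustic from having positive measure for a set of $s$ of positive measure. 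This is precisely the point where the constant mean curvature of ${\rm reg}\pr\Om$ must enter. The paper argues by contradiction: if $\mcH^n(\{(\kappa_{s;\theta}^t)_1=-1/s\})>0$, it pushes this set forward by $g_{r;\theta}$ for $r\in(r_0,s)$ to produce sets $\La_{s-r;\theta}^t$ of positive measure on which $\sum_i(\kappa_{s-r;\theta}^t)_i\le0$ (via \eqref{eq-DM-3-39} and \eqref{ineq-DM-3-46}), selects a point $y_0$ of second-order differentiability whose projection $x_0=\xi_\theta(y_0)$ is an \emph{interior} regular point (this is where the boundary claim from Step~1 is needed), and then compares the level set $\pr\Om_{s-r}$, which has nonpositive mean curvature at $y_0$, against the translate of $\pr\Om$, which has positive constant mean curvature, obtaining a contradiction from Sch\"atzle's strong maximum principle \cref{Thm-SMP}. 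Your proposal never invokes \cref{Thm-SMP} or the CMC condition in this step, so the central analytic difficulty of the proposition is left unresolved.
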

\begin{proof}
As mentioned in the introduction, our first concern is the counting measure of $g_{s;\theta}^{-1}(x)$ for $x\in{\rm reg}\pr\Om$.

\noindent{\bf Step 1. }We prove that for all $x\in\pr\Om$, there holds
\begin{align}\label{eq-DM19-3-43}
    \mcH^0(g_{s;\theta}^{-1}(x))\leq{\color {black}2}.
\end{align}
Suppose on the contrary that for some $x\in\pr\Om$, $\mcH^0(g_{s;\theta}^{-1}(x))\geq3$.

If $x$ is in the interior of $\pr\Om$, taking into account that ${\rm var}(\pr\Om)$ is of constant generalized mean curvature, and also the monotonicity formula \cite[Theorem 17.6]{Sim83}, any $C\in{\rm VarTan}({\rm var}(\pr\Om),x)$ would then be a nontrivial stationary varifold with $\Theta^n(\vvert C\vvert,q)\geq1$ for all $q\in{\rm spt}\vvert C\vvert$, whose support is contained in the intersection of two nonopposite half-spaces.
Thanks to the uniform density lower bounds, $C$ is rectifiable by the Rectifiability Theorem \cite[Theorem 42.4]{Sim83}, and it follows from \cite[Theorem 19.3]{Sim83} that $C$ is a rectifiable cone. By construction, $\vvert C\vvert$ is supported in two non-opposite half-spaces, and hence we can use the interior strong maximum principle \cref{Lem-MP1} to derive a contradiction.

If $x$ is a boundary point of $\pr\Om$, namely, $x\in\Gamma\subset\p\mfR^{n+1}_+$,
we have the following crucial observation.

\footnote{If $x$ is a regular point, the claim has been proved in \cite[Proof of Theorem 1.1, Case 2]{JWXZ22-B}.}{\bf Claim. }For all $x\in\Gamma$, it can not be that $x=g_{s;\theta}(y)$ for some $y\in\Om\cap\mfR^{n+1}_+$ (namely, $y$ can not be those points in $\Om$ that lie in the open upper half-space).

Suppose on the contrary that there exists some $s>0$ and some $y\in\Om\cap\mfR^{n+1}_+$ such that $x=g_{s;\theta}(y)$,
our aim is to show that this case is not possible.
Indeed, let us set $\nu:=\frac{y-s\cos\theta E_{n+1}-x}{\vert y-s\cos\theta E_{n+1}-x\vert}$, since $y$ lies in the interior of $\Om$, and recall that $\de_\theta(y)=s$, we know that $$\nu\cdot(-E_{n+1})
=-\frac{y\cdot E_{n+1}}{s}+\cos\theta<\cos\theta,$$
which means that $\alpha:=\arccos{(\nu\cdot-E_{n+1})}>\theta$ strictly.

Our first observation is that the projection of $y$ onto $\p\mfR^{n+1}_+$ must be in $T$, otherwise, the point that $y$ attains its shifted distance can not be in $\Gamma$, which contradicts to $x=g_{s;\theta}(y)$.

Then we observe that, thanks to that $\Gamma$ is a smooth $(n-1)$-manifold in $\p\mfR^{n+1}$, the blow-up limit $\tilde T$ is a $n$-half plane in $\p\mfR^{n+1}_+$ and $\tilde\Gamma$ is the relative boundary of $\tilde T$ in $\p\mfR^{n+1}_+$ (and is of course an $(n-1)$-plane).
Since $x=g_{s;\theta}(y)$ and $H^-=\{z\cdot\nu\leq0\}$, we know that the $(n-1)$-plane $\tilde\Gamma$ must coincide with the $(n-1)$-plane $\p H^-\cap\p\mfR^{n+1}_+$, otherwise there exists some $x'\neq x\in\Gamma$ such that ${\rm dist}(y-s\cos\theta E_{n+1},x')<s={\rm dist}(y-s\cos\theta E_{n+1},x)$, which contradicts again to the fact that $x=g_{s;\theta}(y)$.

From the second observation, we know that $\tilde T$ must be either $H^+\cap\p\mfR^{n+1}_+$ or $H^-\cap\p\mfR^{n+1}$, and the possibility that $\tilde T=H^-\cap\p\mfR^{n+1}_+$ is ruled out by virtue of the first observation.
By virtue of the arguments in \cref{Sec-2-9},
there exists a blow-up limit at $x$, which is a triple of rectifiable cones, say $\left({\rm var}(\tilde M,\psi_1),{\rm var}(\tilde T),{\rm var}(\tilde\Gamma)\right)$, that is $\theta$-stationary.
Moreover, by construction, the support of  ${\rm var}(\tilde M,\psi_1)$ is contained in the closed half-space $H^-=\{z\in\mfR^{n+1}:z\cdot\nu\leq0\}$.
Therefore, we can use the boundary maximum principle \cref{Cor-MP3} for the
$\theta$-stationary triple $\left({\rm var}(\tilde M,\psi_1),{\rm var}(\tilde T),{\rm var}(\tilde\Gamma)\right)$ to derive a contradiction to the fact that $\pi>\alpha>\theta>0$, which proves the claim.

From the claim we see that when $x\in\Gamma$, it must be that $x=g_{s;\theta}(y)$ for some $y\in\overline\Om\cap\p\mfR^{n+1}_+\subset T$.
In this case, following the proof of the claim, and notice that $\alpha=\theta$ since $y\in\p\mfR^{n+1}_+$, we can use again the boundary maximum principles \cref{Lem-MP3}, \cref{Cor-MP3} to see that when $\theta\in(0,\pi/2)\cup(\pi/2,\pi)$,
$H^-$ is uniquely determined by the point $y\in T$ such that $x=g_{s;\theta}(y)$,
so that there exists at most one $y\in T$ such that $x=g_{s;\theta}(y)$.
\eqref{eq-DM19-3-43} is thus completed.

\

\noindent{\bf Step 2. }We prove \eqref{eq-DM-3-41}.

Using the coarea formula, we find
\begin{align*}
    \frac{\vert\Om^\star_\theta\setminus\zeta_\theta(Z)\vert}{1+\vert\cos\theta\vert}
    \leq\int_{\Om^\star_\theta\setminus\zeta_\theta(Z)}\vert\nabla\de_\theta\vert \rd\mcL^{n+1}
    =&\int_0^\infty\mcH^n\left((\Om^\star_\theta\setminus\zeta_\theta(Z))\cap\Gamma_{s;\theta}^+\right)\rd s
    =\int_0^\infty\mcH^n\left(\Gamma_{s;\theta}^+\setminus\zeta_\theta(Z)\right)\rd s,
\end{align*}
here we used the estimate \eqref{ineq-mod-Ntheta} for the inequality and \eqref{eq-DM-3-38} for the last equality.

Since $x\in{\rm reg}\pr\Om$ and $y\in\Gamma_{s;\theta}^+$ are such that $y=x-s(\nu_\Om(x)-\cos\theta E_{n+1})$ if and only if $x=y-s(\hat N_\theta(y)+\cos\theta E_{n+1})=g_{s;\theta}(y)$, with $g_{s;\theta}$ as in \cref{Prop-DM-3steps}{\bf(3)}.
In particular, we have\footnote{Here we make the following remarks: {\bf 1.} Recalling the definition of $\Gamma_{s;\theta}^t$, we know that for any $y\in\Gamma_{s;\theta}^+$ such that $B_s(y-\cos\theta E_{n+1})$ touches $\pr\Om$ from the interior at some $x\in{\rm reg}(\pr\Om)$, there holds $1/\kappa_n(x)\geq s$, see \cite[Proof of Theorem 1.1]{JWXZ22-B}. Therefore, from the `if and only if' observation above, we know that $y\in\zeta_\theta(Z)$; {\bf 2.} By $g_{s;\theta}^{-1}$ we mean, the pre-image of those $x\in\pr\Om$ that are mapped from $\Gamma_{s;\theta}^+$ through the map $g_{s;\theta}$.}
\begin{align*}
    \zeta_\theta(Z)\cap\Gamma_{s;\theta}^+=g_{s;\theta}^{-1}\left({\rm reg}\pr\Om\right)\quad\text{for all }s>0.
\end{align*}
Taking into account that $g_{s;\theta}^{-1}(\pr\Om)\subset\Gamma_{s;\theta}^+$, in order to prove 
\eqref{eq-DM-3-41}, we are left to show that for a.e. $s>0$, there holds
\begin{align}\label{eq-DM-3-42}
    \mcH^n\left(g_{s;\theta}^{-1}\left({\rm sing}\pr\Om\right)\right)=0.
\end{align}
In other words, the points in $\Gamma_{s;\theta}^+$ that projected over $\pr\Om$, end up on the singular set ${\rm sing}\pr\Om$, have negligible $\mcH^n$-measure. Indeed, we will show that \eqref{eq-DM-3-42} holds for every $s>0$ such that \eqref{eq-DM-3-38} holds, and we shall proceed the proof by a contradiction argument. Precisely, assuming that 
$\mcH^n(\Gamma_{s;\theta}^+)=\mcH^n(\p\Om_s)$ with
\begin{align}\label{ineq-DM-3-42-2}
    \mcH^n\left(g_{s;\theta}^{-1}\left({\rm sing}\pr\Om\right)\right)>0.
\end{align}
In particular, there exists some $t>s$, such that $\mcH^n\left(\Gamma_{s;\theta}^t\cap g_{s;\theta}^{-1}\left({\rm sing}\pr\Om\right)\right)>0$.

Now we invoke the key assumption $\mcH^{n-1}\left({\rm sing}\pr\Om\right)=0$, using the area formula \eqref{formu-area} and \eqref{eq-DM19-3-43}, we find
\begin{align*}
    0=2\mcH^n\left({\rm sing}\pr\Om\right)
    \geq\int_{{\rm sing}\pr\Om}\mcH^0(g_{s;\theta}^{-1}(x))\rd\mcH^n(x)
    =\int_{g_{s;\theta}^{-1}\left({\rm sing}\pr\Om\right)}{\rm J}^{\Gamma_{s;\theta}^t}g_{s;\theta}\rd\mcH^n,
\end{align*}
where ${\rm J}^{\Gamma_{s;\theta}^t}g_{s;\theta}=\prod_{i=1}^n(1+s(\kappa_{s;\theta}^t)_i)\geq0$ along $\Gamma_{s;\theta}^t$ thanks to \cref{Prop-DM-3steps}{\bf(1)} and \eqref{eq-DM-3-15}. Having assumed \eqref{ineq-DM-3-42-2}, and since $\{(\kappa_{s;\theta}^t)_i\}_{i=1}^n$ are indexed in the increasing order, we deduce that
\begin{align}\label{ineq-DM-3-44}
    \mcH^n\left(\left\{y\in\Gamma_{s;\theta}^t:(\kappa_{s;\theta}^t)_1(y)=-1/s\right\}\right)\geq\mcH^n\left(\Gamma_{s;\theta}^t\cap g_{s;\theta}^{-1}\left({\rm sing}\pr\Om\right)\right)>0.
\end{align}
By \eqref{eq-DM-3-39}, we have
\begin{align*}
    \La^t_{s-r}:=
    \left\{\tilde y\in\Gamma_{s-r;\theta}^t:(\kappa_{s-r;\theta}^t)_1(\tilde y)=-1/(s-r)\right\}
    =g_{r;\theta}\left(\left\{y\in\Gamma_{s;\theta}^t:(\kappa_{s;\theta}^t)_1(y)=-1/s\right\}\right).
\end{align*}
Recall that $g_{r;\theta}:\Gamma_{s;\theta}^t\ra\Gamma_{s-r;\theta}^t$ is injective, the area formula \eqref{formu-area} then yields
\begin{align*}
    \mcH^n\left(\La^t_{s-r}\right)=\int_{\left\{y\in\Gamma_{s;\theta}^t:(\kappa_{s;\theta}^t)_1(y)=-1/s\right\}}{\rm J}^{\Gamma_{s;\theta}^t}g_{r;\theta}\rd\mcH^n.
\end{align*}
Using again \eqref{eq-DM-3-15}, we have: for every $r\in(0,s)$, ${\rm J}^{\Gamma_{s;\theta}^t}g_{r;\theta}=\prod_{i=1}^n(1+r(\kappa_{s;\theta}^t)_i)\geq(1-r/s)^n>0$ along $\Gamma_{s;\theta}^t$, and hence \eqref{ineq-DM-3-44} implies that: for every $r\in(0,s)$, there holds
\begin{align}\label{ineq-DM-3-45}
    \mcH^n(\La^t_{s-r;\theta})>0.
\end{align}
Notice that the map $a\mapsto a/(1+ra)$ is increasing on $a\geq0$, using \eqref{eq-DM-3-39}, we find: for every $\tilde y\in\La_{s-r;\theta}^t$ with $\tilde y=g_{r;\theta}(y)$ for some $y\in\Gamma_{s;\theta}^t$, there holds
\begin{align}\label{ineq-DM-3-46}
    \sum_{i=1}^n(\kappa_{s-r;\theta}^t)_i(\tilde y)=-\frac{1}{s-r}+\sum_{i=2}^n\frac{(\kappa_{s;\theta}^t)_i(y)}{1+r(\kappa_{s;\theta}^t)_i(y)}
    \leq-\frac{1}{s-r}+(n-1)\frac{1/(t-s)}{1+(r/(t-s))}
    \leq0,
\end{align}
provided $r\in(r_0,s)$ for some $r_0$ depending on $s,t$ that is close enough to $s$.

Let us consider the set
\begin{align*}
    \La_\theta:=\bigcup_{r_0<r<s}\La_{s-r;\theta}^t.
\end{align*}
By the coarea formula, the estimate \eqref{ineq-mod-Ntheta} and \eqref{ineq-DM-3-45}, we have
\begin{align*}
    \frac{\vert\La_\theta\vert}{1-\vert\cos\theta\vert}
    \geq\int_{\La_\theta}\vert\nabla\de_\theta\vert \rd\mcL^{n+1}
    =\int_{r_0}^s\mcH^n(\La_\theta\cap\p\Om_{s-r})\rd r
    =\int_{r_0}^s\mcH^n(\La_{s-r;\theta}^t)\rd r>0.
\end{align*}

{\color{black}
Recall that we have used the Implicit function theorem to obtain $C^{1,1}$-functions $\psi_j$ so that each $\mcU_j$ is indeed the graph of $\psi_j$ (see \eqref{defn-psij-2}).
By virtue of the fact that $\psi_j$ admits second-order differential $\nabla^2\psi_j$ for $\mcH^n$-a.e. points of $\mcU_j$,
\eqref{ineq-DM-3-45},
and the fact that $\vert\La_\theta\vert>0$, we have:
there exists some $r\in(r_0,s)$ and some $y_0\in\La_{s-r;\theta}^t\cap\mfR^{n+1}_+\subset\Gamma_{s-r;\theta}^t\cap\mfR^{n+1}_+$, at which $\psi_j$ is second-order differentiable, such that $\hat N_\theta$ is tangential differentiable along $\Gamma_{s-r;\theta}^t$ at $y_0$, with $x_0:=\xi_\theta(y_0)$ lying in the interior of $\pr\Om$ (thanks to the claim in the proof of \eqref{eq-DM19-3-43}), and of course
\begin{align}\label{eq-DM-3-47}
    \nabla^{\Gamma_{s-r;\theta}^t}\hat N_\theta(y_0)
    =\left(-\sum_{i=1}^n(\kappa_{s-r;\theta}^t)_i(y_0)\right)\tau_i(y_0)\otimes\tau_i(y_0)
\end{align}
thanks to \eqref{eq-DM-3-15}.
Moreover, by \eqref{ineq-DM-3-46}, we also have
\begin{align}\label{ineq-DM-3-48}
    \sum_{i=1}^n(\kappa_{s-r;\theta}^t)_i(y_0)\leq0.
\end{align}
Now we set $\nu=-\hat N_\theta(y_0)$ and
\begin{align*}
    \mathbf{D}_\rho:=\{z\in\nu^\perp: \vert z\vert<\rho\},\quad\mathbf{C}_\rho:=\{z+h\nu:z\in\mathbf{D}_\rho,\vert h\vert<\rho\},\quad\text{for }\rho>0.
\end{align*}
By virtue of the second-order differentiability of $\psi_j$ at $y_0$, \eqref{defn-psij-2}, and notice that for the level-set $\pr\Om_s=\{\de_\theta=s\}$, $\nu$ is its unit normal at $y_0$, and that the mean curvature at $y_0$ is given by
$$\vert\nabla\de_\theta(y_0)\vert H_{\pr\Om_s}(y_0)=\De\de_\theta(y_0)-\nabla^2\de_\theta(\nu,\nu)=\sum_{i=1}^n\nabla^2\de_\theta(\tau_i(y_0),\tau_i(y_0)),$$
taking also \eqref{eq-DM-3-47}
and \eqref{ineq-DM-3-48} into account, for every $\ep>0$, we have: there exists some $\rho>0$ and a second-order polynomial $\eta:\nu^\perp\cong\mfR^n\ra\mfR$, such that $\eta(0)=0,\nabla\eta(0)=0$, and
\begin{align}
    -{\rm div}\left(\frac{\nabla\eta}{\sqrt{1+\vert\nabla\eta\vert^2}}\right)(z)
    \leq&-{\rm div}\left(\frac{\nabla\eta}{\sqrt{1+\vert\nabla\eta\vert^2}}\right)(0)+\ep\notag\\
    \leq&\sum_{i=1}^n(\kappa_{s-r;\theta}^t)_i(y_0)+2\ep
    \leq2\ep
\end{align}
for every $z\in\mathbf{D}_\rho$ and
\begin{align}
    y_0+\{z+h\nu:z\in\mathbf{D}_\rho,-\rho<h<\eta(z)\}\subset(y_0+\mathbf{C}_\rho)\cap\Om_{s-r}.
\end{align}
Now we translate $\Om$ by $(s-r)(\hat N_\theta(y_0)+\cos\theta E_{n+1})$, recall that $x_0=\xi_\theta(y_0)$ lies in the interior of $\pr\Om$,
we thus find: there exists a small enough $\rho_1>0$, such that
\begin{align*}
    \Om_{s-r}\cap(y_0+\mathbf{C}_{\rho_1})\subset(\Om+(s-r)(\hat N_\theta(y_0)+\cos\theta E_{n+1}))\cap(y_0+\mathbf{C}_{\rho_1}),
\end{align*}
with
\begin{align*}
    y_0\in\pr\Om_{s-r}\cap\p\left(\Om+(s-r)(\hat N_\theta(y_0)+\cos\theta E_{n+1})\right)\cap(y_0+\mathbf{C}_{\rho_1}).
\end{align*}
We are in the position to exploit the Sch\"atzle's
strong maximum principle \cref{Thm-SMP}, with
\begin{align*}
    M=\p\left(\Om+(s-r)(\hat N_\theta(y_0)+\cos\theta E_{n+1})\right),
\end{align*}
$\nu=-\hat N(y_0)$, $U=\mathbf{D}_{\rho_1}$, $z_0=0$, $h_0=\nu\cdot y_0-\rho_1$ and $\eta$ as above. Notice that if we set
\begin{align*}
    \varphi(z)=\inf\{h\in(h_0,\infty):z+h\nu\in M\},\quad z\in\mathbf{D}_{\rho_1},
\end{align*}
then we have $h_0<\eta\leq\varphi<\infty$ on $\mathbf{D}_{\rho_1}$, and of course $\varphi(0)=\eta(0)=0$. Arguing as in \cite[Step 4]{DM19}, by comparing the mean curvatures, we deduce a contradiction due to the Sch\"atzle's strong maximum principle \cref{Thm-SMP}. In particular, this proves \eqref{eq-DM-3-41}.
}

\end{proof}

\begin{proof}[Proof of \cref{Thm-Alexandrov}]
By virtue of \eqref{eq-DM-3-41}, we are now prepared to apply the classical flow argument in \cite{JWXZ22-B}.

\noindent{\bf Step 1. }Applying the flow method.

By virtue of \cref{Prop-DM-3steps}{\bf(2)} and \eqref{eq-DM-3-41}, using the area formula, we have
\begin{align*}
    \vert\Om\vert
    =&\vert\Om^\star_\theta\vert
    \leq\vert\zeta_\theta(Z)\vert
    \leq\int_Z\mcH^0(\zeta_\theta^{-1}(y))\rd\mcL^{n+1}(y)
    =\int_Z{\rm J}^Z\zeta_\theta \rd\mcL^{n+1},
\end{align*}
a classical computation gives
\begin{align*}
    {\rm J}^Z\zeta_\theta(x,t)=\left(1-\cos\theta\nu_{\Om}(x)\cdot E_{n+1}\right)\prod_{i=1}^n(1-t\kappa_i(x)),
\end{align*}
and hence we have
\begin{align*}
\vert\Om\vert
\leq\int_Z{\rm J}^Z\zeta_\theta \rd\mcL^{n+1}
=\int_{{\rm reg}\pr\Om} \rd\mcH^n(x)\int_0^{\frac{1}{\max\left\{\kappa_i(x)\right\}}}\left(1-\cos\theta\nu_{\Om}\cdot E_{n+1}\right)\prod_{i=1}^n(1-t\kappa_i(x))\rd t.
\end{align*}
By the AM-GM inequality, 
and the fact that $\max\left\{\kappa_i(x)\right\}\geq H^0_{\Om;\theta}/n$, we find
\begin{align*}
    \vert\Om\vert
    \leq&\int_{{\rm reg}\pr\Om} \rd\mcH^n(x)\int_0^{\frac{1}{\max\left\{\kappa_i(x)\right\}}} \left(1-\cos\theta\nu_{\Om}\cdot E_{n+1}\right)\left(\frac{1}{n}\sum_{i=1}^n\left(1-t\kappa_i(x)\right)\right)^n \rd t\notag\\
    \leq&\int_{{\rm reg}\pr\Om} \left(1-\cos\theta\nu_{\Om}\cdot E_{n+1}\right)\rd\mcH^n\int_0^{\frac{n}{H^0_{\Om;\theta}}} \left(1-t\frac{H^0_{\Om;\theta}}{n}\right)^n \rd t\notag\\
    =&\frac{n}{n+1}\int_{{\rm reg}\pr\Om} \frac{\left(1-\cos\theta\nu_{\Om}\cdot E_{n+1}\right)}{H^0_{\Om;\theta}} \rd\mcH^n(x).
\end{align*}
Invoking the definition of $H^0_{\Om;\theta}$ in \eqref{defn-H0}, we see that equalities hold throughout the argument. In particular, we have
\begin{align}
    \vert\zeta_\theta(Z)\setminus\Om\vert=0,\label{eq-DM-3-52}\\
    \mcH^0(\zeta_\theta^{-1}(y))=1\quad\text{for a.e. }y\in\Om,\label{eq-DM-3-53}\\
    \kappa_{i}(x)=\frac{H^0_{\Om;\theta}}{n}\quad\text{for every }x\in{\rm reg}\pr\Om,\text{ i=1,}\ldots,n.\label{eq-DM-3-54}
\end{align}
\noindent{\bf Step 2. }Analysis of singularities.

We have shown that the regular part of $\p\Om$ must be spherical.
Now we are going to analyze the singularities by virtue of, again, the Sch\"atzle's strong maximum principle.

Indeed, recall that we have rescaled $\Om$ so that $H^0_{\Om;\theta}=n$. By virtue of \eqref{eq-DM-3-54} and the Young's law,
since ${\rm reg}\pr\Om$ is relatively open in $\pr\Om$, we can find a family $\{S_{\theta,i}\}_{i\in I}$, $I\subset\mathbb{N}$, of mutually disjoint subsets of ${\rm reg}\pr\Om$, with $S_{\theta,i}\subset \p B_1(x_i-\cos\theta E_{n+1})\cap\overline{\mfR^{n+1}_+}$ for points $x_i\in\p\mfR^{n+1}_+$ or $x_i\in\mfR^{n+1}_+$ with $\left(x_i-\cos\theta E_{n+1}\right)\cdot E_{n+1}\geq1$, where $\p B_1(x_i-\cos\theta E_{n+1})\cap\overline{\mfR^{n+1}_+}$ is either a $\theta$-cap or a sphere that lies completely in $\overline{\mfR^{n+1}_+}$, such that
\begin{align}\label{eq-DM-3-55}
    {\rm reg}\pr\Om
    =\bigcup_{i\in I}S_{\theta,i},\quad S_{\theta,i}\text{ is relatively open in }\pr\Om\text{ and is connected}.
\end{align}
Since $S_{i;\theta}\subset\pr\Om$, we know that $\de_\theta(x_i)\leq1.$

{\bf Claim. }For every $i\in I$, $\de_\theta(x_i)=1$.

Suppose not, then there exists some constant $\delta>0$, such that $\de_\theta(x_i)=1-4\delta$. Notice that $(B_{(1-\vert\cos\theta\vert)\delta}(x_i)\cap\mfR^{n+1}_+)\cap A_i\subset\Om$ is nonempty, where $A_i=\zeta_\theta(S_i\times(0,1))$ is an open subset of $\Om$.
For any $y\in B_{(1-\vert\cos\theta\vert)\delta}(x_i)\cap\mfR^{n+1}_+\cap A_i$, we have the following observations:
\begin{align*}
    &{\rm dist}\left(y-(1-3\delta)\cos\theta E_{n+1},x_i-(1-4\delta)\cos\theta E_{n+1}\right)\\
    =&\vert y-x_i-\delta\cos\theta E_{n+1}\vert
    \leq\vert y-x_i\vert+\delta\vert\cos\theta\vert
    <\delta,
\end{align*}
where we used the fact that $y\in B_{(1-\vert\cos\theta\vert)\delta}(x_i)$ for the last inequality. Using the triangle inequality again, we have
\begin{align*}
    &{\rm dist}\left(y-(1-3\delta)\cos\theta E_{n+1},\pr\Om\right)
    \leq{\rm dist}\left(y-(1-3\delta)\cos\theta E_{n+1},\xi_\theta(x_i)\right)\\
    \leq&{\rm dist}\left(y-(1-3\delta)\cos\theta E_{n+1},x_i-(1-4\delta)\cos\theta E_{n+1}\right)+{\rm dist}\left(x_i-(1-4\delta)\cos\theta E_{n+1},\xi_\theta(x_i)\right)\\
    <&\delta+1-4\delta=1-3\delta,
\end{align*}
due to \eqref{eq-dist-theta} and \eqref{ineq-weighted-distance}, 
it can not be that $\de_\theta(y)\geq1-3\de$,
which implies $\de_\theta(y)<1-3\delta$. On the other hand, for any $x\in S_{\theta,i}\subset\p B_1(x_i-\cos\theta E_{n+1})\cap\mfR^{n+1}_+$, there holds
\begin{align*}
    &{\rm dist}\left(y-(1-\delta)\cos\theta E_{n+1},x\right)=\vert y-(1-\delta)\cos\theta E_{n+1}-x\vert\notag\\
    =&\vert\left( y-(1-\delta)\cos\theta E_{n+1}-(x_i-\cos\theta E_{n+1})\right)+(x_i-\cos\theta E_{n+1}-x)\vert\notag\\
     \geq&\vert x_i-\cos\theta E_{n+1}-x\vert-\vert y-x_i+\delta\cos\theta E_{n+1}\vert\notag\\
     =&1-\vert y-x_i+\delta\cos\theta E_{n+1}\vert
     \geq1-\left(\vert y-x_i\vert+\delta\vert\cos\theta\vert\right)\notag\\
     >&1-\delta\vert\cos\theta\vert-\delta(1-\vert\cos\theta\vert)=1-\delta,
\end{align*}
where we have used the triangle inequality for the first and the second inequality; the fact that $y\in B_{(1-\vert\cos\theta\vert)\delta}(x_i)$ for the last inequality. In view of \eqref{ineq-weighted-distance}, we know that, when restricted to $S_{\theta,i}$, $\de_\theta(y)>1-\delta$.
In particular, combining above observations, we have: for any $y\in B_{(1-\vert\cos\theta\vert)\delta}(x_i)\cap\mfR^{n+1}_+\cap A_i$, it must be that $\xi_\theta(y)\notin S_{\theta,i}$.
Since \cref{Prop-DM-3steps}{\bf(2)} and \eqref{eq-DM-3-41} imply that for a.e. $y\in\Om$, there exists $x\in{\rm reg}\pr\Om$, such that $\xi_\theta(y)=x$, we conclude from \eqref{eq-DM-3-55} that for a.e. $y\in B_{(1-\vert\cos\theta\vert)\delta}(x_i)\cap\mfR^{n+1}_+\cap A_i$, there exists $j\neq i\in I$ and $x\in S_j$ such that $\xi_\theta(y)=x$. In particular, $B_{(1-\vert\cos\theta\vert)\delta}(x_i)\cap\mfR^{n+1}_+\cap A_i\cap A_j$ is a nonempty, relatively open set in $\mfR^{n+1}_+$, and hence we definitely have
\begin{align*}
    0<\vert B_{(1-\vert\cos\theta\vert)\delta}(x_i)\cap\mfR^{n+1}_+\cap A_i\cap A_j\vert,
\end{align*}
where $A_i\cap A_j\subset\{y\in\Om:\mcH^0(\xi_\theta^{-1}(y))\geq2\}$.
However, this contradicts to \eqref{eq-DM-3-53} and proves the claim.

To complete the proof, we are left to show that for each $i$, the closure of $S_i$, say $T_i$, is either a complete $\theta$-cap or a complete sphere, namely, $T_i=\p B_1(x_i-\cos\theta E_{n+1})\cap\overline{\mfR^{n+1}_+}$. To this end, since $\de_\theta(x_i)=1$ for every $i\in I$\footnote{By definition, we have: ${\rm dist}(x_i-\cos\theta E_{n+1},\pr\Om)=1$, in other words, the part of $\pr\Om$ that does not belong to $T_i$, if exists, must be lying outside $\overline B_1(x_i-\cos\theta E_{n+1})\cap\overline{\mfR^{n+1}_+}$.},
we can apply the strong maximum principle \cref{Thm-SMP} with $M=\pr\Om$ at each $x\in T_i$ that lies in $\mfR^{n+1}_+$, comparing with 
$\p B_1(x_i-\cos\theta E_{n+1})$ locally near $x$, to find some $\rho_x>0$, such that
\begin{align*}
    \pr\Om\cap B_{\rho_x}(x)\cap \p B_1(x_i-\cos\theta E_{n+1}) 
    =\p B_1(x_i-\cos\theta E_{n+1})\cap B_{\rho_x}(x).
\end{align*}
This in particular implies that in the open half-space, we have $\p B_1(x_i-\cos\theta E_{n+1})\cap\mfR^{n+1}_+\subseteq\pr\Om\cap\mfR^{n+1}_+$,
and hence the whole $\theta$-cap $\p B_1(x_i-\cos\theta E_{n+1})\cap\overline{\mfR^{n+1}_+}\subseteq\pr\Om.$

Finally, the fact that $\Om$ is a set of finite perimeter implies that $I\subset\mathbb{N}$ is indeed finite, and it follows that $\pr\Om$ is the union of finitely many  $\theta$-caps and spheres with equal radii, which completes the proof.
\end{proof}

\appendix
\section{Cut-off functions near singularities}\label{App-1}
We begin by constructing useful cut-off functions near ${\rm sing}\pr\Om$.
The technic is standard 
and these cut-off functions are very useful for the study of surfaces with singularities, see e.g., \cite{SS81,Ilmanen96,Wickramasekera14,DePM17,Zhu18}.

\begin{lemma}\label{Lem-Cut-off}
	Let $\Om$ be a non-empty, bounded, relatively open set with finite perimeter in $\bar\mfR^{n+1}_+$ such that $\mcH^{n-1}({\rm sing}\pr\Om)=0$.
	If $\pr\Om$ has Euclidean volume growth,
	then for any $\ep>0$, there exist open sets $S_\ep'\subset S_\ep\subset\mfR^{n+1}$ with ${\rm sing}\pr\Om\subset S_\ep'$ and $S_\ep\subset\{x:{\rm dist}(x,{\rm sing}\pr\Om)<\ep\}$, and there exists a smooth cut-off function $\varphi_\ep\in C^\infty(\mfR^{n+1})$ such that $0\leq\varphi_\ep(x)\leq1$ with
	\begin{align}\label{varphi-construction}
		\varphi_\ep(x)=
		\begin{cases}
			0\quad &x\in S_\ep',\\
			1\quad &x\in\mfR^{n+1}\setminus S_\ep.
		\end{cases}
	\end{align}
	Moreover, $\varphi_\epsilon$ satisfies the following properties:
	\begin{align}\label{pw-conv}
		\varphi_\ep(x)\to 1 \hbox{ pointwisely for }x\in {\rm reg}\pr\Om,
	\end{align}
	\begin{align}\label{esti-cutoff-1}
		\int_{\pr\Om}\vert\nabla^{\pr\Om}\varphi_\epsilon(x)\vert \rd\mcH^{n}(x)\leq C\ep,
	\end{align}
	where $\nabla^{\pr\Om}$ denotes the tangential gradient of $\varphi_\ep$ with respect to the tangent space $T_x\p\Om$, which exists for $\mcH^n$-a.e. along $\pr\Om$ thanks to the condition that $\mcH^{n-1}({\rm sing}\pr\Om)=0$.
	Here and in all follows, $C$ will be referred to as positive constants that are independent of $\ep$.
\end{lemma}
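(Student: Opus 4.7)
My plan is to follow the standard covering-based construction of cut-off functions near negligible singular sets. The ingredients I would combine are: (i) the compactness of ${\rm sing}\pr\Om$ together with the vanishing of $\mcH^{n-1}({\rm sing}\pr\Om)$ to produce economical ball covers, (ii) elementary radial bump functions built on these balls, and (iii) the Euclidean volume growth of $\pr\Om$ from \cref{Prop-Evg} to convert the covering data into the surface integral bound \eqref{esti-cutoff-1}.

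First, I would fix $\ep>0$ small enough so that $\ep<R_1$, where $R_1$ comes from \cref{Prop-Evg}. Since ${\rm sing}\pr\Om$ is closed in the compact set $\overline{\Om}$, it is compact, so the definition of $(n-1)$-dimensional Hausdorff measure together with $\mcH^{n-1}({\rm sing}\pr\Om)=0$ produces a finite cover $\{B_{r_i}(x_i)\}_{i=1}^{N}$ of ${\rm sing}\pr\Om$ with each $x_i\in\pr\Om$, each $r_i<\ep$, and $\sum_{i=1}^{N} r_i^{n-1}<\ep$. I set $S_\ep:=\bigcup_{i=1}^{N} B_{r_i}(x_i)$ and $S_\ep':=\bigcup_{i=1}^{N} B_{r_i/2}(x_i)$, which automatically satisfies the containment ${\rm sing}\pr\Om\subset S_\ep'\subset S_\ep\subset\{x:{\rm dist}(x,{\rm sing}\pr\Om)<\ep\}$.

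Next, I would build $\varphi_\ep$ as the product $\varphi_\ep:=\prod_{i=1}^{N}\psi_i$, where each $\psi_i\in C^\infty(\mfR^{n+1})$ is a standard radial cut-off with $\psi_i\equiv 0$ on $B_{r_i/2}(x_i)$, $\psi_i\equiv 1$ outside $B_{r_i}(x_i)$, $0\le\psi_i\le 1$, and $\vert\nabla\psi_i\vert\le C/r_i$ for a universal $C$. The required form \eqref{varphi-construction} is then immediate. The pointwise convergence \eqref{pw-conv} is also immediate: for any fixed $x\in{\rm reg}\pr\Om$ one has ${\rm dist}(x,{\rm sing}\pr\Om)>0$, so $x\notin S_\ep$ once $\ep$ is smaller than this distance, hence $\varphi_\ep(x)=1$.

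The only nontrivial point is \eqref{esti-cutoff-1}, which comes out of a clean dimensional balance. Since $\vert\nabla\varphi_\ep\vert$ is supported in $\bigcup_i(B_{r_i}(x_i)\setminus B_{r_i/2}(x_i))$ with $\vert\nabla\varphi_\ep\vert\le C/r_i$ on the $i$-th annulus, and since $x_i\in\pr\Om$ with $r_i<\ep<R_1$, the Euclidean volume growth $\mcH^n(\pr\Om\cap B_{r_i}(x_i))\le C_1 r_i^n$ gives
\begin{align*}
\int_{\pr\Om}\vert\nabla^{\pr\Om}\varphi_\ep\vert\,\rd\mcH^n
\le\sum_{i=1}^{N}\frac{C}{r_i}\cdot C_1 r_i^n
=CC_1\sum_{i=1}^{N} r_i^{n-1}
<CC_1\,\ep,
\end{align*}
which is exactly the desired bound. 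The mild subtlety worth flagging is the interplay between the $r^n$-scaling of Euclidean volume growth and the $r^{n-1}$-scaling of the covering gauge: this mismatch by one power of $r$ is precisely compensated by the $C/r_i$ gradient estimate, so everything balances to give a bound linear in $\ep$. No further estimate is needed; the tangential gradient $\nabla^{\pr\Om}\varphi_\ep$ is well-defined $\mcH^n$-a.e. on $\pr\Om$ under the assumption $\mcH^{n-1}({\rm sing}\pr\Om)=0$, since $\pr\Om$ is $C^2$-regular away from a set of $(n-1)$-measure zero.
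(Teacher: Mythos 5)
Your proof is correct and follows essentially the same covering argument as the paper: a finite ball cover of the compact set ${\rm sing}\pr\Om$ with $\sum_i r_i^{n-1}<\ep$, radial bump functions with gradient of order $1/r_i$, and the Euclidean volume growth bound $\mcH^n(\pr\Om\cap B_{r_i}(x_i))\leq C_1 r_i^n$ to produce the $r_i^{n-1}$ balance. The only (cosmetic) difference is that you glue the bumps by taking a product, which is immediately smooth, whereas the paper takes a pointwise minimum and then mollifies; both yield the same estimate.
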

\begin{proof}
	We begin by noticing that ${\rm sing}\pr\Om$ is compact since it is closed and bounded.
	
	For any $\epsilon>0$, since $\mcH^{n-1}({\rm sing}\pr\Om)=0$, we may cover the singular set ${\rm sing}\pr\Om$ with finitely many balls $\mcG:=\{B_{r_i}(z_i)\}_{i=1}^{N_1}$ where $z_i\in \pr\Om$, $\sum_{i=1}^{N_1} r_i^{n-1}<\epsilon$,
	and we may assume without loss of generality that $r_i<{\rm diam}(\Om)$ for each $i$,
    so that the Euclidean volume growth condition \eqref{ineq-vg-M} is valid for $z_i$, see the proof of \cref{Prop-Evg}.
	
	For each $i$, let $\varphi_i\in C^\infty(\mfR^{n+1})$ satisfy $0\leq\varphi_i\leq1$ with
	\begin{align*}
		\varphi_i(x)
		=\begin{cases}
			0\quad &\forall x\in B_{r_i}(z_i),\\
			1\quad &\forall x\in \mfR^{n+1}\setminus B_{2r_{i}}(z_i),
		\end{cases}
	\end{align*}
	and $\vert \nabla\varphi_i(x)\vert\leq\frac{2}{r_i}$ for all $x\in\mfR^{n+1}$.
	
	Define $\tilde\varphi_\epsilon$ by
	\begin{align*}
		\tilde{\varphi}_\epsilon(x):=\min_i \varphi_i(x).
	\end{align*}
	It follows that $\tilde\varphi_\epsilon$ is piecewise-smooth with $0\leq\tilde{\varphi}_\epsilon\leq1$, and
	\begin{align}\label{varphitilde}
		\tilde{\varphi}_\epsilon(x)=
		\begin{cases}
			0 \quad \text{on}\quad  \bigcup_i B_{r_i}(z_i)\supseteq {\rm sing}\pr\Om,\\
			1 \quad \text{on}\quad \mfR^{n+1}\setminus \bigcup_iB_{2r_i}(z_i).
		\end{cases}
	\end{align}
	
	By the Euclidean volume growth condition of $\pr\Om$ and that $\sum_{i=1}^{N_1} r_i^{n-1}<\epsilon$, we have
	\begin{align}\label{ineq-cutoff-1}
		\int_{\pr\Om}\vert\nabla^{\pr\Om}\tilde{\varphi}_\epsilon(x)\vert\rd\mcH^{n}(x)
		&\leq\sum_{i=1}^{N_1}\int_{\pr\Om\cap\left(B_{2r_i}(z_i)\setminus B_{r_i}(z_i)\right)}\vert\nabla\varphi_i(x)\vert\rd\mcH^{n}(x)\notag\\
		&\leq \sum_{i=1}^{N_1} \frac{2}{r_i}\mcH^{n}\left(\pr\Om\cap B_{2r_i}(z_i)\right)\notag\\
		&\leq 2^{n+1}C_1\sum_i^{N_1} r_i^{n-1}\leq2^{n+1}C_1\epsilon.
	\end{align}
	
	We mollify $\tilde{\varphi}_\epsilon$ to obtain a smooth function $\varphi_\epsilon$, which still satisfies estimate of the form \eqref{ineq-cutoff-1} with some constant $C$ that is independent of the choice of $\ep$.
	Since $\tilde{\varphi}_\epsilon$ satisfies \eqref{varphitilde}, we may let $S_\epsilon',S_\epsilon$ denote the sets such that
	\begin{align*}
		\varphi_\epsilon(x)=
		\begin{cases}
			0\quad &x\in S_\epsilon',\\
			1\quad &x\in\mfR^{n+1}\setminus S_\epsilon.
		\end{cases}
	\end{align*}
	It is clear that \eqref{varphi-construction} holds and hence \eqref{pw-conv} is true.	
	We see that $\varphi_\epsilon$ is the desired smooth cut-off function, and this completes the proof.
\end{proof}
Using these cut-off functions, we can prove the tangential divergence theorem for $\mcA$-stationary set.
\begin{proof}[Proof of \cref{Lem-divergenctheorem}]
	We first observe that by virtue of
	\eqref{formu-LZZ21-1.7}, the $\mcA$-stationarity of $\Om$, and \cref{Rem-integrals}, we have: for any $X\in\mathfrak{X}_c(\overline{\mfR^{n+1}_+})$, there holds
	\begin{align*}
		\int_{\pr\Om}{\rm div}_{\p\Om}X\rd\mcH^n
		=\int_{\pr\Om}X\cdot(c\nu_{\Om})\rd\mcH^n.
	\end{align*}
	Since $\mcH^{n-1}({\rm sing}\pr\Om)=0$, we may apply the Allard's regularity theorem \cite[Theorem 24.2]{Sim83} to ${\rm var}(\pr\Om)$, and see that $\pr\Om\cap\mfR^{n+1}_+$ is an analytic hypersurface with constant mean curvature $c$ in a neighborhood of every $x\in{\rm reg}\pr\Om\cap\mfR^{n+1}_+$.
	
	On the other hand, notice that \cref{Lem-Cut-off} is applicable here thanks to \cref{Prop-Evg}, and hence
	for any $\ep>0$, we have $\varphi_\ep,S'_\ep$ and $S_\ep$ from \cref{Lem-Cut-off}.
	For any $X\in\mathfrak{X}(\overline{\mfR^{n+1}_+})$,
	let $X_\ep$ be the vector field given by
	\begin{align*}
		X_\ep:=\varphi_\ep X,
	\end{align*}
	since $\varphi_\ep\equiv0$ on $S'_\ep\supset{\rm sing}{\pr\Om}$, we readily see that
	\begin{align*}
		X_\ep
		=\begin{cases}
			0\quad&\text{on }S^{'}_\ep,\\
			\varphi_\ep X\quad&\text{on }S_\ep\setminus S^{'}_\ep,\\
			X\quad&\text{on }\pr\Om\setminus S_\ep.
		\end{cases}
	\end{align*}
	
	Integrating ${\rm div}_{\p\Om}(X_\ep)$ on $\pr\Om\setminus S'_\ep$, since $\pr\Om\setminus S'_\ep$ is regular enough, we may apply the classical tangential divergence theorem to find
	\begin{align*}
		\int_{\pr\Om}{\rm div}_{\p\Om} X_\ep\rd\mcH^n
		=\int_{\pr\Om}X_\ep\cdot(c\nu_\Om)\rd\mcH^n+\int_{\Gamma}X_\ep\cdot \mu\rd\mcH^{n-1}.
	\end{align*}
	A further computation then yields that
	\begin{align*}
		\int_{\pr\Om}\varphi_\ep{\rm div}_{\p\Om}X\rd\mcH^n+\int_{\pr\Om}\nabla^{\pr\Om}\varphi_\ep\cdot X\rd\mcH^n
		=\int_{\pr\Om}\varphi_\ep X\cdot(c\nu_\Om)\rd\mcH^n+\int_\Gamma\varphi_\ep X\cdot\mu\rd\mcH^{n-1}.
	\end{align*}
	
	Since $X\in\mathfrak{X}(\overline{\mfR^{n+1}_+})$ and $\pr\Om$ is bounded, we have that
	${\rm div}_{\p\Om}X=({\rm div}X-\nabla_{\nu_\Om}X\cdot\nu_\Om)$ and $\vert X\vert$ are bounded (the upper bounds are independent of $\ep$).
	By virtue of \eqref{pw-conv} and \eqref{esti-cutoff-1} in \cref{Lem-Cut-off},
	we may send $\ep\searrow0$ and use the dominated convergence theorem to conclude \eqref{formu-div-M}.
	
	On the other hand, since $\p\Om\setminus\pr\Om$ is smooth, and the singularities of $T={\rm cl}_{\p\mfR^{n+1}_+}(\p\Om\setminus\pr\Om)$ are on $\Gamma$,
	we can follow the proof of \eqref{formu-div-M} to conclude \eqref{formu-div-B+}. This completes the proof. 
\end{proof}

\printbibliography

\end{document}